\newcommand{\Z}{\mathcal{Z}}
\newcommand{\I}{\mathcal{I}}
\newcommand{\J}{\mathcal{J}}
\newtheorem{theorem}{Theorem}[section]
\newtheorem{conjecture}[theorem]{Conjecture}
\newtheorem*{conjecture*}{Conjecture}
\newtheorem{lemma}{Lemma}[section]
\newtheorem{prop}{Proposition}[section]
\newtheorem*{remark*}{Remark}
\newtheorem{remark}[theorem]{Remark}
\newtheorem{corollary}[theorem]{Corollary}
\theoremstyle{definition}
\newtheorem*{definition*}{Definition}
\renewcommand{\mod}[1]{{\ifmmode\text{\rm\ (mod~$#1$)}\else\discretionary{}{}{\hbox{ }}\rm(mod~$#1$)\fi}}
\newcommand{\C}{{\mathbb C}}
\newcommand{\N}{{\mathbb N}}
\newcommand{\R}{{\mathbb R}}
\newcommand{\K}{\mathcal{K}}
\newcommand{\Ll}{\mathcal{L}}
\newcommand{\m}{{\bf m}}
\newcommand{\e}{\varepsilon}
\begin{document}

\newcommand{\nathan}[1]{{\color{purple} \sf  Nathan: #1}}
\newcommand{\alia}[1]{{\color{red} \sf  Alia: #1}}
\newcommand{\fatma}[1]{{\color{blue} \sf  Fatma: #1}}

\title[Mean values of long Dirichlet polynomials]{Mean values of long  Dirichlet polynomials \\ with divisor coefficients}

\thanks{Alia Hamieh and Nathan Ng were supported by the NSERC discovery grants RGPIN-2018-06313 and RGPIN-2020-06032, respectively. Fatma \c{C}i\c{c}ek was supported by a Pacific Institute for the Mathematical Sciences (PIMS) postdoctoral fellowship at the University of Northern British Columbia. This research was also funded by the PIMS Collaborative Research Group {\it $L$-functions in Analytic Number Theory}.}


\keywords{\noindent Dirichlet polynomials, mean value problems, moments of Riemann zeta function, generalized divisor functions, additive divisor sums}

\subjclass[2010]{Primary 11M06, 11M26, 11M41; Secondary 11N37, 11N75}

\author[F. \c{C}\.{i}\c{c}ek, A. Hamieh and N. Ng]{Fatma \c{C}\.{i}\c{c}ek, Alia Hamieh and Nathan Ng}
\address{University of Northern British Columbia\\ Department of Mathematics and Statistics \\ 3333 University Way\\ Prince George, BC\ V2N4Z9\\ Canada}
\email{alia.hamieh@unbc.ca}
\address{University of Northern British Columbia\\ Department of Mathematics and Statistics \\ 3333 University Way\\ Prince George, BC\ V2N4Z9\\ Canada}
\email{cicek@unbc.ca}
\address{University of Lethbridge \\ Department of Mathematics and Computer Science \\ 4401 University Drive \\ Lethbridge, AB \ T1K 3M4 \\ Canada}
\email{nathan.ng@uleth.ca}


\begin{abstract}
In this article, we prove an asymptotic formula for the mean value of long smoothed Dirichlet polynomials with divisor coefficients. Our result has a main term that includes all lower order terms and a power saving error term. This is derived from a more general theorem on mean values of long smoothed Dirichlet polynomials that was previously established by the second and third authors in \cite{HN}. We thus establish a stronger form of a conjecture of Conrey and Gonek~\cite[Conjecture 4]{CG} in the case of divisor functions.  
\end{abstract}

\maketitle


\section{Introduction}

Mean values of Dirichlet polynomials play an important role in analytic number theory.  
They have important applications to zero-density estimates, primes in short intervals, gaps between primes and mean values of $L$-functions.
Although we will describe some elements of the theory, one may consult \cite[Ch. 9,10]{IK} and \cite[Ch. 7]{M} for a comprehensive discussion on mean values of Dirichlet polynomials. 

For a sequence of complex numbers $( a(n) )$, an associated Dirichlet polynomial is a partial sum in the form
\begin{equation*}
 \sum_{n\leq K} \frac{ a(n)}{ n^{s}}.
\end{equation*}
By \cite[Corollary 3]{MV}, this has the approximate behavior
\begin{equation}
   \label{diagestimate}
\frac{1}{T}   \int_{0}^{T}  \Big| \sum_{n\leq K} a(n) n^{-\sigma-it}  \Big|^2 \, dt 
  \asymp   \sum_{n\leq K} |a(n)|^2 n^{-2\sigma} \quad \text{as} \quad K\to \infty,
\end{equation}
provided that $K= O(T)$. If $K=o(T)$, then $\asymp$ can be replaced by $\sim$ and this is an asymptotic formula. 


Note that the integral on the left-hand side of \eqref{diagestimate} is called a mean value of the Dirichlet polynomial. If $K$ is not $O(T)$, then this integral is referred to as a mean value of a long Dirichlet polynomial and it is considerably more difficult to evaluate. Observe that when the left-hand side of \eqref{diagestimate} is expanded out via the identity $|z|^2 = z \overline{z}$, one encounters correlation sums in the form
\begin{equation}
 \label{correlation}
 \sum_{n \le x} a(n) \overline{a(n+h)} \quad \text{for} \quad h\in \mathbb{Z}^+,
\end{equation}
which are viewed as part of the off-diagonal contribution. In this case, the integral in \eqref{diagestimate} depends, in a crucial way, on the asymptotic behaviour of such correlation sums. The work of Goldston and Gonek \cite{GG} provides very precise formulae for mean values of this type under some conditions on the behavior of $(a(n))$. Indeed, their work can lead to asymptotic formulae for mean values of general Dirichlet polynomials in the case that $T \le K \le T^{1+\eta}$ for some $\eta < 1$ if there is square-root cancellation in the error term of their formula for \eqref{correlation}. The reader is referred to Theorems 1-3 and their corollaries in \cite{GG}. 

Later, in their work on the sixth and eighth moments of the Riemann zeta function \cite{CG}, Conrey and Gonek conjectured an asymptotic formula for the mean values of long Dirichlet polynomials when $a(n)=\tau_k(n)$ and $K=T^{1+\eta}$ with $0<\eta<1$. Here for $k \in \mathbb{N}$, $\tau_k$ denotes the $k$-th divisor function, which is defined as 
    \[
     \tau_k(n) = \# \{ (n_1, \ldots, n_k) \in \mathbb{N}^k \ | \ n_1 \cdots n_k = n \}
    \quad  \text{for }\,\, n \in \mathbb{Z}^+.
    \]
For example, for $k=2$, $ \tau_2(n)$ is the ordinary divisor function $d(n)$.

 \begin{conjecture}[Conrey-Gonek]{\cite[Conjecture 4]{CG}} \label{conjCG}
    Let $T$ be sufficiently large and $K=T^{1+\eta}$ with $\eta \in (0,1)$. Then we have
    \begin{equation*}
      \label{meanvaluekk}
        \int_{T}^{2T} \bigg | \sum_{n\leq K}\frac{\tau_{k}(n)}{n^{\frac12+it}}\bigg|^2\;dt \sim \frac{a_k}{\Gamma(k^2+1)} \, w_k \left(\frac{\log K}{\log T} \right)T (\log T)^{k^2},
    \end{equation*}
    where
    \begin{align*}
    a_k&=\prod_{p}\left\{\left(1-\frac{1}{p}\right)^{k^2}\sum_{\alpha=0}^{\infty}\frac{\tau_k^2(p^\alpha)}{p^\alpha}\right\},
    \\
     \hspace{1.8 cm}  
     w_k(x) &= x^{k^2}  \bigg\{ 1- \sum_{n=0}^{k^2-1} \binom{k^2}{n+1}\gamma_k(n) (-1)^n (1-x^{-    n-1})  \bigg\},
    \end{align*}
    and 
    \begin{equation*}  \label{gammakn}
      \gamma_k(n) =  \sum_{i=1}^{k} \sum_{j=1}^{k}  \binom{k}{i} \binom{k}{j}
       \binom{n-1}{i+j-2 }\binom{i+j-2}{j-1}  \quad \text{for } \, n\in\mathbb{Z}^+ \,\, \text{and } \, \gamma_k(0)=k. 
    \end{equation*}
\end{conjecture}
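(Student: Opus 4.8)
The plan is to deduce the conjectured formula from the general theorem on mean values of long smoothed Dirichlet polynomials proved by the second and third authors in \cite{HN}. That theorem evaluates
\[
\int_{0}^{\infty} \Phi\!\left(\tfrac{t}{T}\right)\Big|\sum_{n\le K} a(n)\,n^{-1/2-it}\Big|^2\,dt
\]
for a fixed smooth, compactly supported weight $\Phi$, with a power-saving error term, provided the coefficient sequence $(a(n))$ has shifted convolution sums of the shape $\sum_{n\le x} a(n)\overline{a(n+h)} = x\,P_h(\log x) + O(x^{1-\delta})$, uniformly for $h$ in a suitable range, where $P_h$ is a polynomial whose coefficients are arithmetic functions of $h$ whose Dirichlet series continue meromorphically past $\Re(s)=1$ (this is precisely the long-polynomial regime anticipated by Goldston--Gonek \cite{GG} and Conrey--Gonek \cite{CG}). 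The first step is therefore to record that $a(n)=\tau_k(n)$ meets this hypothesis: this is exactly the additive divisor estimate for $\tau_k$, in which $P_h$ has degree $2k-2$ and its coefficients are built from a singular series $\mathfrak{S}_k(h)$ given by an absolutely convergent Euler product. For $k=2$ this estimate is a theorem (Estermann, Heath-Brown, Motohashi, Deshouillers--Iwaniec, Meurman), so the conjecture follows unconditionally; for $k\ge 3$ it is the additive divisor conjecture and the result is conditional upon it.

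Granting the input, the bulk of the argument is the explicit evaluation of the main term supplied by \cite{HN}, which I would separate into a diagonal and an off-diagonal piece. For the diagonal, write $\sum_{n\ge1}\tau_k(n)^2 n^{-s} = \zeta(s)^{k^2} A_k(s)$ with $A_k$ holomorphic and non-vanishing for $\Re(s)>1/2$ and $A_k(1)=a_k$; a contour shift then gives $\sum_{n\le K}\tau_k(n)^2/n = \frac{a_k}{\Gamma(k^2+1)}(\log K)^{k^2}+\cdots$, a polynomial of degree $k^2$ in $\log K$ which, multiplied by the mass of $\Phi$, is the entire answer when $K=o(T)$ and accounts for the normalisation $w_k(1)=1$. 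For the off-diagonal, the contribution is governed by Dirichlet series of the type $\sum_{h\ge1}\mathfrak{S}_k(h)\,h^{-s}$, which factor as $\zeta(s)$ times an Euler product holomorphic at $s=1$; extracting the Laurent expansion at the pole and integrating against the oscillatory kernel of \cite{HN} produces the terms of lower degree in $\log K$ relative to $\log T$. The central identity to verify is that, after combining the two contributions and normalising by $T(\log T)^{k^2}$, one obtains precisely $\frac{a_k}{\Gamma(k^2+1)}\,w_k\!\left(\frac{\log K}{\log T}\right)$. In particular this pins down the coefficients $\gamma_k(n)$: the double binomial sum $\sum_{i,j}\binom{k}{i}\binom{k}{j}\binom{n-1}{i+j-2}\binom{i+j-2}{j-1}$ should arise as the Taylor coefficients of a product of two shifted factors each carrying a pole of order $k$ (coming from $\zeta^{k}$ dressed by the local data of $\mathfrak{S}_k$), and proving the resulting binomial identity, together with the bookkeeping relating the two logarithmic scales, is the heart of the computation.

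Finally, to pass from the smooth weight of \cite{HN} to the sharp cutoff $\int_T^{2T}$ of Conjecture~\ref{conjCG}, sandwich the characteristic function of $[1,2]$ between smooth functions supported slightly inside and slightly outside $[1,2]$; the resulting discrepancy is $o\!\left(T(\log T)^{k^2}\right)$, which is harmless for the claimed $\sim$, although the power-saving error term survives only in the smoothed statement. One also checks the consistency $w_k(1)=1$ --- immediate, since every summand in the definition of $w_k$ carries the factor $1-x^{-n-1}$ --- so that the limit $\eta\to0^+$ returns the classical diagonal asymptotic. The main obstacle, apart from the additive divisor conjecture itself for $k\ge3$ (which enters only as a hypothesis and does not affect the shape of the main term), is the combinatorial matching in the off-diagonal term: showing that the residue of the relevant multiple Dirichlet series, assembled from $\mathfrak{S}_k$ and $\zeta^{k^2}$, collapses to the explicit closed form $w_k$ with coefficients $\gamma_k(n)$.
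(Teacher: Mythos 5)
You are treating Conjecture~\ref{conjCG} as a theorem to be proved, but in this paper it is exactly that: a conjecture, quoted from \cite{CG}, and the paper does not prove it. What the paper establishes is a smoothed version of the $k=2$ case only, for $0<\eta<\frac13$ (Theorem~\ref{thm22} and Corollary~\ref{cor}). Your proposal overstates what the machinery of \cite{HN} can deliver in two concrete ways. First, for $k=2$ the input you invoke is not sufficient for the full range $\eta\in(0,1)$: Theorem~\ref{HN-main} carries the error term $K^{\vartheta_{2,2}+\e}\left(T/T_0\right)^{1+C_{2,2}}$, and the only available uniform, smoothed, shifted binary additive divisor estimate (Hughes--Young \cite{HY}, with $\vartheta_{2,2}=\tfrac34$, $C_{2,2}=\tfrac54$) makes this acceptable only when $K=T^{1+\eta}$ with $\eta<\tfrac13$; the sharp-cutoff correlation asymptotics of Estermann/Heath-Brown/Motohashi that you cite are not in the smoothed, shift-uniform form that \cite{HN} actually requires, and the full range for $k=2$ is known only through the quite different method of Bettin--Conrey \cite{BC}. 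Second, for $k\ge3$ the additive divisor conjecture $\mathcal{AD}_{k,k}$ is open, so at best you would get a conditional statement, which is not what the conjecture asserts; and even conditionally, the main term produced by \cite{HN} is a sum of contour integrals of the shifted series in \eqref{H}, and the ``central identity'' you defer --- that these collapse to $\frac{a_k}{\Gamma(k^2+1)}\,w_k\!\left(\frac{\log K}{\log T}\right)$ with the coefficients $\gamma_k(n)$ --- is precisely the step that is neither carried out in your proposal nor anywhere in this paper. The paper's actual computation exists only for $k=2$, and it proceeds quite differently from your sketch: one introduces shifts $\I=\{a,0\}$, $\J=\{b,0\}$, proves holomorphy of $\mathcal{M}_{0,\I,\J;\omega}$ and $\mathcal{M}_{1,\I,\J;\omega}$ in the shifts (Lemma~\ref{thm:holomorphy}), shifts contours to collect simple-pole residues, and only then takes $b\to a\to0$, with the final cancellation of polar terms and the polynomials $Q_j$ verified by Maple.

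A further gap concerns the cutoffs. Your sandwich argument removes only the $t$-weight $\omega$ (this is Appendix~\ref{proof of cor}); it does not remove the smooth factor $\varphi(n/K)$ from the coefficients, and neither does the paper: both Theorem~\ref{thm22} and Corollary~\ref{cor} concern $\sum_n \tau_2(n)n^{-1/2-it}\varphi(n/K)$, whereas the conjecture has the sharp cutoff $n\le K$. Passing to the sharp cutoff is not cosmetic: the discarded range $K<n\le(1+\mu)K$ contributes a diagonal of size roughly $\mu\,T(\log K)^{k^2-1}$, so one must let $\mu\to0$ with $T$, but the off-diagonal error in Conjecture~\ref{conj:shifted-sum} grows like $P^{C_{k,\ell}}$ with $P\asymp\mu^{-1}$ (via $\varphi^{(j)}\ll\mu^{-j}$), so shrinking $\mu$ degrades the additive-divisor error. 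Quantifying this trade-off is a genuine piece of work that your proposal does not address, and it is presumably why the paper states its results for a fixed smoothing $\varphi$ and claims only a ``stronger form of the conjecture \ldots for smoothed Dirichlet polynomials.''
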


 The  case $k=2$ of Conjecture \ref{conjCG} was established by Bettin and Conrey \cite{BC} for all $\eta>0$. In this article we prove a stronger form of the conjecture in the same case, but for $0<\eta <\frac13$ and for smoothed Dirichlet polynomials. To be precise, we obtain all lower order terms with a power savings error term. We note that both the error term and the range for $\eta$ in our theorem below depend directly on bounds for the error term in the binary additive divisor problem. We discuss this in more detail in Remark \ref{rmk:thm} below.

Before presenting our result, we need to set some notation. Let $(a(n))$ and $(b(n))$ be sequences and $\varphi$ be some real-valued smooth function. We will specify the properties that $\varphi$ is required to have in Section \ref{ss-phi}. We define the smoothed Dirichlet polynomials 
  \begin{equation*}
  \label{ABphi} 
  \mathds{A}_{a,\varphi}(s) = \sum_{n =1}^{\infty} \frac{a(n)}{n^s} \varphi \Big( \frac{n}{K} \Big)
  \quad
  \text{and}
  \quad
  \mathds{B}_{b,\varphi}(s) = \sum_{n =1}^{\infty} \frac{b(n)}{n^s} \varphi \Big( \frac{n}{K} \Big).
   \end{equation*}
We then consider the mean value
   \begin{equation} \label{DabwK}
    \mathscr{D}_{a,b;  \omega}(K) = \int_{\mathbb{R}} \omega(t)  \mathds{A}_{a,\varphi}(\tfrac{1}{2}+it) 
    \mathds{B}_{b,\varphi}(\tfrac{1}{2}-it)\;dt, 
    \end{equation}
where $\omega$ is a complex-valued smooth function that satisfies the conditions
 \begin{align}
     & \label{cond1}  \bullet \omega \text{ is smooth}. \\
    & \label{cond2} \bullet \text{The support of }  \omega  \text{ lies in } [c_1T,c_2T]  \text{ where } 0 < c_1 < c_2.\\
    & \label{cond3}  \bullet  \text{For some positive absolute constant $\nu$, there exists } T_0  \ge T^{\nu} 
    \text{ such that }  
    \\
    &T_0 \ll T \text{ and }
      \omega^{(j)}(t) \ll T_{0}^{-j}.\notag
    \end{align}
The Fourier transform of $\omega$ is 
    \begin{equation}
      \label{ft}
      \widehat{\omega}(u) = \int_{\mathbb{R}} \omega(t) e^{-2 \pi i ut} dt.
    \end{equation}
It satisfies the following property.  \begin{equation}
      \label{whatbound}
   \text{ If } \,  |u| \gg T_{0}^{-1+\e},  \text{ then } \,
     | \widehat{\omega}(u)|  \ll T^{-A} \text{ for any } A >0.
    \end{equation}
Since throughout the paper we will only study the case where $a(n)=\tau_k(n)$ and  $b(n)=\tau_\ell(n)$ for some positive integers $k, \ell$, in order to simplify our notation we set 
$$ 
\mathscr{D}_{k,\ell;\omega}(K) :=\mathscr{D}_{\tau_k,\tau_{\ell};\omega}(K).
$$
We also need to introduce some real sequences $(g_j)$ and $(\delta_j)$. These are defined as coefficients in the following Taylor series.  
\begin{align*}
     f(s) & := s\zeta(1+s)  = \sum_{j=0}^{\infty} g_j s^j,    \\
     h(s) & :=   \frac{1}{\zeta(2+s)}  = \sum_{j=0}^{\infty} \delta_j s^j.   
\end{align*}
Another sequence $(c_j)$, which depends on the smoothing function $\varphi$, is defined as follows. Let 
\begin{equation*}
  G(s) :=  -2 \int_{0}^{\infty}  \varphi(t) \varphi'(t) 
    t^s \, dt.
\end{equation*}
Observe that $G(s)$ is entire. We then write its Taylor series expansion as
\begin{equation*}
  G(s)  =  \sum_{j=0}^{\infty} c_j s^j.
\end{equation*} 

With these definitions in hand, we can state our main result. 
\begin{theorem} \label{thm22}
    Let $K=T^{1+\eta} $ with $ 0 < \eta<\frac13$. Suppose that a weight function $\omega$ satisfies conditions \eqref{cond1}, \eqref{cond2} and \eqref{cond3} with $ \, \displaystyle \nu>\tfrac{5+3(\eta+1)}{9}$, while $\varphi$ is a function satisfying the conditions \eqref{eq:cond varphi}. Then for
    \[
\mathscr{D}_{2,2;\omega}(K) 
= \int_{\mathbb R} \omega(t) \,
\bigg| \sum_{n=1}^\infty 
\frac{\tau_2(n)}{n^{1/2+it}} \varphi\left( \frac{n}{K}\right)\bigg|^2 \mathop{dt},
\]
we have  
        \begin{equation*}
              \mathscr{D}_{2,2;\omega}(K) = 
             \sum_{j=0}^{4}  \int_{-\infty}^{\infty} \omega(t)  Q_j\left(\log K, \log  \frac{t}{2 \pi}  \right) \, dt 
              + O\bigg(T^{\tfrac34(1+\eta)+\e}\Big(\frac{T}{T_0}\Big)^{\frac94}+T^{1-\tfrac{\eta}{2}}\bigg) ,
         \end{equation*}
 where each $Q_j(x,y) \in \mathbb{R}[x,y]$ is a polynomial of degree $j$ given by 
         \begin{equation*}
           Q_4(x,y) =  \frac{1}{4!\zeta(2)}
          \big(-x^4 +8x^3y-24x^2 y^2 +32 xy^3 -14 y^4\big),
         \end{equation*}
\begin{equation*}
\begin{split}
\label{Q3}
Q_3(x,y)
 & =\left(\frac{2\delta_0g_1}{3}+\frac{\delta_1}{3}-\frac{c_1\delta_0}{6} \right)x^3+\big(-4\delta_0g_1-2\delta_1+c_1\delta_0\big)x^2y\\
 &+\big(8\delta_0g_1+4\delta_1-2c_1\delta_0\big)xy^2+\left(-4\delta_0g_1-2\delta_1+\frac{4c_1\delta_0}{3}\right)y^3,
\end{split}
\end{equation*}

\begin{equation*}
\begin{split}
\label{Q2}
Q_2(x,y)
=& \, \left(-2\delta_0g_2-3\delta_0g_1^2-4\delta_1g_1-2\delta_2+2c_1\delta_0g_1+c_1\delta_1
-\frac{c_2\delta_0}{2} \right)x^2\\
&+\big(8\delta_0g_2+12\delta_0g_1^2+16\delta_1g_1+8\delta_2-8c_1\delta_0g_1-4c_1\delta_1+2c_2\delta_0\big)xy\\
&+\big(-5\delta_0g_1^2-4\delta_2-6\delta_0g_2-8\delta_1g_1+8c_1\delta_0g_1+4c_1\delta_1-2c_2\delta_0\big)y^2,
\end{split}
\end{equation*}

\begin{equation*}
\begin{split}
 \label{Q1}
Q_1(x,y)=& \, \big(4\delta_0g_3+12\delta_0g_1g_2+4\delta_0g_1^3+8\delta_1g_2+12\delta_1g_1^2+16\delta_2g_1+8\delta_3 \\
&-4c_1\delta_0g_2-6c_1\delta_0g_1^2-8c_1\delta_1g_1-4c_1\delta_2+4c_2\delta_0g_1+2c_2\delta_1-c_3\delta_0\big)x\\&+\big(-12\delta_0g_3-4\delta_0g_1g_2-8\delta_1g_2+4\delta_1g_1^2+4\delta_0g_1^3\\
&+8c_1\delta_0g_2+12c_1\delta_0g_1^2+16c_1\delta_1g_1+8c_1\delta_2-8c_2\delta_0g_1-4c_2\delta_1+2c_3\delta_0\big)y,
\end{split}
\end{equation*}
and
\begin{equation*}
\begin{split}
 \label{Q0}
Q_0(x,y)   
=& \, 16 \delta_{4}-16 \delta_{1} g_{3}+32 \delta_{3} g_{1}+32 g_{1}^{2} \delta_{2}-24 \delta_{0} g_{4}+8 g_{2}^{2} \delta_{0}+5 \delta_{0} g_{1}^{4}
+16 \delta_{1} g_{1}^{3} -8 \delta_{0} g_{1} g_{3} \\
&+16 \delta_{1} g_{1} g_{2}+12 \delta_{0} g_{1}^{2} g_{2}+12 g_{1}^{2} \delta_{1} c_{1} +12 \delta_{0} g_{1} g_{2}c_1+8 \delta_{3}c_1+4 \delta_{0} g_{1}^{3}c_1 
+4 \delta_{0} g_{3}c_1 \\
+8 \delta_{1}& g_{2} c_{1} +16 g_{1} \delta_{2}c_1 
 -4 \delta_{2}c_2-6 g_{1}^{2} \delta_{0} c_{2}-4 \delta_{0} g_{2}c_2-8 g_{1} \delta_{1}c_2+4 g_{1} \delta_{0} c_{3}+2 \delta_{1} c_{3}-\delta_{0} c_{4}.
 \end{split}
\end{equation*}
\end{theorem}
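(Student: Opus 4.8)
The plan is to deduce Theorem~\ref{thm22} from the general mean value theorem for long smoothed Dirichlet polynomials established by the second and third authors in \cite{HN}, specialized to $a(n)=b(n)=\tau_2(n)$. Expanding $\lvert\,\cdot\,\rvert^2$ and integrating against $\omega$ turns $\mathscr{D}_{2,2;\omega}(K)$ into a double sum over $n,m$ weighted by $\widehat{\omega}\big(\tfrac{1}{2\pi}\log\tfrac{n}{m}\big)$ and by $\varphi(n/K)\varphi(m/K)$. The diagonal $n=m$ produces an elementary main term governed by $\sum_n\tau_2(n)^2 n^{-s}=\zeta(s)^4/\zeta(2s)$, whose pole of order four at $s=1$ is the ultimate source of the degree-four polynomials $Q_j$; the off-diagonal $n\ne m$ is where the binary additive divisor problem enters, and — the hallmark of the long-polynomial regime — it contributes to the main term at the same order as the diagonal, via the analytic continuation of the Estermann zeta function. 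The theorem of \cite{HN} packages both contributions: under a hypothesis asserting that the shifted convolution sums of $(a(n))$ and $(b(n))$ admit a main term of the expected arithmetic shape with a power-saving remainder, it outputs an explicit main term for $\mathscr{D}_{a,b;\omega}(K)$ together with an error controlled by the exponent in that power saving. Thus the proof splits into (i) verifying the hypothesis for $\tau_2$ and (ii) evaluating the resulting main term in closed form.

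For (i), I would invoke the classical binary additive divisor theorem in a quantitative form,
\[
\sum_{n\le x}\tau_2(n)\,\tau_2(n+h)=\int_{1}^{x}\mathscr{P}_h(\log t)\,dt+O_{\e}\!\big(x^{3/4+\e}\big),
\]
uniform in $h$ over the range of shifts relevant to \eqref{DabwK}, where $\mathscr{P}_h$ is a polynomial of degree $2$ built from $\sigma_{-1}(h)=\sum_{d\mid h}d^{-1}$ and the leading Taylor coefficients at $s=1$ of the Estermann zeta function; equivalently, $\sum_n\tau_2(n)\tau_2(n+h)\,n^{-s}$ continues meromorphically with a pole of order three at $s=1$ and polynomial growth on vertical lines. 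This goes back to Ingham, Estermann and Motohashi. The admissible exponent $3/4$ is what forces the restriction $\eta<\tfrac13$, since beyond that the shift range is no longer compatible with a genuine power saving; it is also why the present argument is confined to $(k,\ell)=(2,2)$, the $k$-fold additive divisor problem with power-saving error uniform in the shift being unavailable for $k\ge 3$.

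For (ii), I would substitute the additive divisor data into the \cite{HN} main term and carry out the resulting residue computation. That main term is a finite combination of residues, at poles coalescing at $s=1$, of products built from $\zeta(1+s)$, $1/\zeta(2+s)$, the Mellin datum $G(s)$ of $\varphi$, a factor $K^s$, and the archimedean weight $\omega$ (which contributes the variable $\log\tfrac{t}{2\pi}$); expanding these residues by means of the Taylor coefficients $g_j$ of $f(s)=s\zeta(1+s)$, $\delta_j$ of $h(s)=1/\zeta(2+s)$ and $c_j$ of $G(s)$, and collecting by total degree in $\big(\log K,\log\tfrac{t}{2\pi}\big)$, produces the polynomials $Q_4,\dots,Q_0$. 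As a consistency check, after setting $x=\log K$, $y=\log\tfrac{t}{2\pi}$ and using $\delta_0=1/\zeta(2)$, the top-degree piece $Q_4$ should reduce to the leading term of Conjecture~\ref{conjCG} with $k=2$, the dependence on $\log K/\log T$ appearing through the ratio $x/y$. I expect this residue bookkeeping to be the principal obstacle of the proof: no individual step is deep once \cite{HN} and the additive divisor estimate are in hand, but one must organize the combinatorics of the fourth-order residue expansion carefully enough to match, coefficient by coefficient, the explicit closed forms for $Q_3,Q_2,Q_1,Q_0$ displayed in the statement.

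Finally I would assemble the error term. The contribution of the remainder $O_\e(x^{3/4+\e})$ from the additive divisor formula, summed against the weights coming from $\widehat{\omega}$ and its derivatives and integrated over $t$, propagates through the estimates of \cite{HN} to the term $T^{\frac34(1+\eta)+\e}(T/T_0)^{9/4}$, while the secondary term $T^{1-\eta/2}$ arises from the small-shift range together with the truncation of the smoothed polynomial where the additive divisor asymptotic is not yet effective. The hypothesis $\nu>\tfrac{5+3(\eta+1)}{9}$ is exactly the requirement that $\tfrac34(1+\eta)+\tfrac94(1-\nu)<1$, so that $T^{\frac34(1+\eta)+\e}(T/T_0)^{9/4}$ is a genuine power saving against the main term of size $\asymp T(\log T)^4$; since $\eta<\tfrac13$ likewise makes $T^{1-\eta/2}$ a power saving, the proof is then complete.
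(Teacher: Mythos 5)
Your overall strategy coincides with the paper's (apply the general theorem of \cite{HN} with Hughes--Young's additive divisor input, then evaluate the resulting main term by residues and assemble the error), but there is a genuine gap at the central step. Theorem \ref{HN-main} is stated for shifted divisor coefficients $\sigma_{\I},\sigma_{\J}$ whose shifts satisfy the separation condition \eqref{sizerestriction-}, $|a_{i_1}-a_{i_2}|\gg 1/\log T$, so it can never be ``specialized to $a(n)=b(n)=\tau_2(n)$'' in one stroke: that would require all shifts equal to $0$. Worse, the main term it outputs, e.g.\ \eqref{M1swaps}, contains factors such as $\zeta(1-a_{i_1}+a_{j_1})$ and $\mathcal{Z}(\I\setminus\{a_i\},\{-a_i\})$ which blow up term-by-term as the shifts coalesce; only the full combination is regular there. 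So your plan of a direct ``fourth-order residue computation at coalescing poles'' needs two missing ingredients: (i) a proof that the asymptotic formula of \cite{HN} persists without \eqref{sizerestriction-} with a uniform error, and (ii) a proof that the limit of the main term as the shifts tend to $0$ exists and a way to compute it. This is precisely what the paper supplies via Lemma \ref{thm:holomorphy} (holomorphy of $\mathcal{M}_{0,\I,\J;\omega}$ and $\mathcal{M}_{1,\I,\J;\omega}$ in the shifts), Lemma \ref{thm:remove-size-cond} (Cauchy-integral removal of the separation condition), and the Section \ref{sec:proof of mainthm} computation with $\I=\{a,0\}$, $\J=\{b,0\}$, where the Taylor-coefficient bookkeeping shows all polar terms in $a$, $b$, $a\pm b$ cancel (the identities $C(j)=0$ for $0\le j\le 3$) before letting $b\to a$ and $a\to 0$. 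Without this limit/cancellation argument (or an equivalent re-derivation handling higher-order poles from scratch, which the paper notes is possible but harder), the deduction does not go through.

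A secondary issue concerns the additive divisor input and the error term. The hypothesis to verify is the smoothed Conjecture \ref{conj:shifted-sum}, whose remainder $O(P^{C_{2,2}}X^{\vartheta_{2,2}+\e})$ carries an explicit dependence on the derivative parameter $P$ of the test function; the paper uses Hughes--Young's $\delta$-method estimate with $(\vartheta_{2,2},C_{2,2},\beta_{2,2})=(\tfrac34,\tfrac54,1)$, and it is exactly $1+C_{2,2}=\tfrac94$ that produces the factor $(T/T_0)^{9/4}$ and hence the condition $\nu>\tfrac{5+3(\eta+1)}{9}$ (your arithmetic for that equivalence is correct). The sharp-cutoff statement $\sum_{n\le x}\tau_2(n)\tau_2(n+h)=\int_1^x\mathscr{P}_h(\log t)\,dt+O_\e(x^{3/4+\e})$ you invoke is not in the required form and does not by itself furnish the $P$-dependence, so the assertion that the remainder ``propagates to $T^{\frac34(1+\eta)+\e}(T/T_0)^{9/4}$'' is unsupported as written; the correct citation is \cite[Theorem 5.1 and (74)]{HY}. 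Finally, the term $T^{1-\eta/2}$ arises from the contour-shift remainders in evaluating $\mathcal{M}_0$ and $\mathcal{M}_1$ (the integrals on $\Re(s)=-\tfrac12+O(\delta)$, giving $TK^{-1/2+2\delta}$ and $K^{-1/2+3\delta}T^{3/2-\delta}$), not from a small-shift range where the divisor asymptotic is ineffective.
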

In Appendix \ref{proof of cor}, we show how to remove the smooth function $\omega$ and derive the following result.

\begin{corollary}\label{cor}
    Let $K=T^{1+\eta} $ with $ 0 < \eta<\frac13$, and let $\varphi$ be a function satisfying the conditions in \eqref{eq:cond varphi}. Then, as $T\to\infty$, we have
   \begin{equation*}
 \int_{T}^{2T}
\bigg| \sum_{n=1}^\infty 
\frac{\tau_2(n)}{n^{1/2+it}} \varphi\left( \frac{n}{K}\right)\bigg|^2 \mathop{dt}
  =      \sum_{j=0}^{4}  \int_{T}^{2T}  Q_j\Big(\log K, \log \frac{t}{2 \pi} \Big) \, dt 
                +O\Big(T^{\max\left\{\tfrac{12+3\eta}{13} ,1-\tfrac{\eta}{2}\right\}}\Big),
    \end{equation*}
where the polynomials $Q_i(x,y)$ are as given in Theorem \ref{thm22}.
\end{corollary}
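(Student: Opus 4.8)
The plan is to deduce Corollary~\ref{cor} from Theorem~\ref{thm22} by an approximation argument: replacing the sharp cutoff $\mathds{1}_{[T,2T]}(t)$ by a suitable smooth weight $\omega$ to which Theorem~\ref{thm22} applies, and then estimating the error introduced by this replacement. First I would fix a small parameter $\Delta$ (to be optimized at the end, of the shape $\Delta = T^{\theta}$ for some $\theta<1$) and construct $\omega = \omega_{\pm}$ smooth, with $\omega_{-} \le \mathds{1}_{[T,2T]} \le \omega_{+}$, where $\omega_{+}$ is supported on $[T-\Delta, 2T+\Delta]$, equals $1$ on $[T,2T]$, and satisfies $\omega_{+}^{(j)} \ll \Delta^{-j}$ (and symmetrically for $\omega_{-}$, supported on $[T+\Delta, 2T-\Delta]$). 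Then $\omega_{\pm}$ satisfy conditions~\eqref{cond1}--\eqref{cond3} with $T_0 = \Delta$, i.e. with $\nu$ determined by $\theta$; I must check that the hypothesis $\nu > \frac{5+3(\eta+1)}{9}$ of Theorem~\ref{thm22} can be met, which forces $\theta$ to exceed $\frac{8+3\eta}{9}$.

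The second step is the sandwich estimate. Writing $\mathscr{I}(K)$ for the sharp-cutoff integral and $\mathscr{D}_{2,2;\omega_{\pm}}(K)$ for the smoothed versions, nonnegativity of the integrand $|\mathds{A}_{\tau_2,\varphi}(\tfrac12+it)|^2$ gives
\[
\mathscr{D}_{2,2;\omega_{-}}(K) \le \mathscr{I}(K) \le \mathscr{D}_{2,2;\omega_{+}}(K).
\]
Apply Theorem~\ref{thm22} to each of $\omega_{\pm}$. The main terms become $\sum_{j=0}^{4} \int \omega_{\pm}(t) Q_j(\log K, \log\tfrac{t}{2\pi})\,dt$, and since $Q_j(\log K, \log\tfrac{t}{2\pi})$ is bounded by $\ll (\log T)^{4}$ on the relevant range while $\omega_{\pm}$ differs from $\mathds{1}_{[T,2T]}$ only on two intervals of length $\ll \Delta$, each such main term agrees with $\sum_{j=0}^{4}\int_{T}^{2T} Q_j(\log K, \log\tfrac{t}{2\pi})\,dt$ up to an error $O(\Delta (\log T)^{4})$. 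The Theorem's own error term is $O\big(T^{\frac34(1+\eta)+\e}(T/\Delta)^{9/4} + T^{1-\eta/2}\big)$.

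Third, I would collect and optimize. The total error from the sandwich is
\[
O\!\Big(\Delta (\log T)^{4} + T^{\frac34(1+\eta)+\e}\big(T/\Delta\big)^{9/4} + T^{1-\eta/2}\Big).
\]
Ignoring $\e$ and logs, balancing the first two terms means choosing $\Delta$ with $\Delta = T^{\frac34(1+\eta)}(T/\Delta)^{9/4}$, i.e. $\Delta^{13/4} = T^{\frac34(1+\eta) + 9/4} = T^{\frac{3\eta+12}{4}}$, giving $\Delta = T^{\frac{3\eta+12}{13}}$ and common value $T^{\frac{12+3\eta}{13}}$; one checks this $\Delta$ is $T^{\theta}$ with $\theta = \frac{12+3\eta}{13} > \frac{8+3\eta}{9}$ for $\eta<1$, so the hypothesis on $\nu$ is satisfied. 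Combining with the $T^{1-\eta/2}$ term yields the stated $O\big(T^{\max\{(12+3\eta)/13,\ 1-\eta/2\}}\big)$ and, since this exponent is $<1$ for $\eta>0$, the main terms genuinely dominate, so the displayed asymptotic holds as $T\to\infty$. The only subtlety—and the step most likely to need care—is verifying that the admissible window width $\Delta$ is simultaneously large enough for Theorem~\ref{thm22}'s hypothesis on $\nu$ and small enough that $\Delta(\log T)^4$ stays below the target; the computation above shows there is room, but one should present the inequality $\frac{12+3\eta}{13} > \frac{8+3\eta}{9} \Leftrightarrow 9(12+3\eta) > 13(8+3\eta) \Leftrightarrow 108+27\eta > 104 + 39\eta \Leftrightarrow 4 > 12\eta \Leftrightarrow \eta < \tfrac13$ explicitly, which is exactly the range of the corollary.
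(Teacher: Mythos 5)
Your proposal is correct and follows essentially the same route as the paper's own proof in Appendix \ref{proof of cor}: sandwiching $\mathds{1}_{[T,2T]}$ between smooth weights of transition width $T_0=\Delta$, applying Theorem \ref{thm22} to each, bounding the discrepancy in the main terms by $O(\Delta(\log T)^4)$, and balancing against the theorem's error to pick $\Delta = T^{(12+3\eta)/13}$. Your explicit verification that this choice satisfies the hypothesis $\nu>\tfrac{8+3\eta}{9}$ precisely when $\eta<\tfrac13$ is a welcome detail the paper leaves implicit.
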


Observe that asymptotically, this result has the same leading term as the one in the conjecture of Conrey and Gonek in the case $k=2$ for $0 < \eta < \frac{1}{3}$.


\begin{remark}\label{rmk:thm}
 Our theorem depends on  a result of Hughes and Young~\cite[Theorem 5.1 and (74)]{HY}, where they applied Duke, Friedlander and Iwaniec's version of the $\delta$-method (see \cite{DFI}). Their work only makes use of the Weil bound for Kloosterman sums. In the recent articles of Aryan~\cite{Ar} and Topacogullari~\cite{T1}, the main theorem in  \cite{DFI} is improved by applying the spectral theory of automorphic forms
and bounds for sums of Kloosterman sums. By applying these more advanced techniques, it is likely that both the error term and the range of $\eta$  in our theorem can be improved. 

In fact, if  the binary divisor conjecture $\mathcal{AD}_{2,2}(\vartheta_{2,2},C_{2,2},\beta_{2,2})$ holds for a triple $(\vartheta_{2,2}, C_{2,2},\beta_{2,2}) \in \big[\tfrac{1}{2}, 1\big) \times [0,\infty) \times (0,1]$ (see Conjecture \ref{conj:shifted-sum} below for notation), then Theorem \ref{thm22} holds for $\eta<\frac{1}{\vartheta_{2,2}}-1$ and $\nu>\frac{C_{2,2}+(\vartheta_{2,2}+\varepsilon)(\eta+1)}{1+C_{2,2}}$ with an error term $O\bigg(T^{\vartheta_{2,2}(1+\eta)+\e}\Big(\frac{T}{T_0}\Big)^{1+C_{2,2}}+T^{1-\tfrac{\eta}{2}}\bigg) $.  In particular, if $\vartheta_{2,2}= \frac{1}{2}$, then Theorem \ref{thm22} holds for Dirichlet polynomials with length $K=T^{c}$ for any $c <2$. 
\end{remark}

 Our approach in proving Theorem \ref{thm22} is slightly different from those of Goldston and Gonek \cite{GG} and Conrey and Gonek \cite{CG}. In both works, one of the key steps is to express the mean value in \eqref{diagestimate} in terms of the correlation sums in \eqref{correlation} via partial summation. Whereas in the earlier work of the second and the third authors \cite{HN}, the starting point is to split the sum into sums over dyadic intervals via a smooth partition of unity.  Furthermore, they also work with  shifted divisor functions. Conditionally on the additive divisor conjecture~\cite[Conjecture~4]{HN}, they compute the mean value
\begin{equation*}
 \mathscr{D}_{ \sigma_{\I}, \sigma_{\J};\omega}(K),
\end{equation*}
where 
\[
    \sigma_{\I}(n) = \sum_{d_1 \cdots d_k=n} d_{1}^{-a_1} \cdots d_{k}^{-a_k}
 \quad \text{ and } \quad
     \sigma_{\J}(n) = \sum_{d_1 \cdots d_{\ell}=n} d_{1}^{-b_1} \cdots d_{k}^{-b_{\ell}}
\]
are shifted divisor functions associated to sets of complex numbers $\I = \{ a_1, \ldots, a_k \}$ and $\J = \{ b_1, \ldots, b_{\ell} \}$. Then $ \mathscr{D}_{ \sigma_{\I}, \sigma_{\J};\omega}(K)$ is evaluated by using a smooth partition of unity. Thus, instead of the correlation sums as in \eqref{correlation}, the authors work with the smoothed correlation sums  
\begin{equation}
  \label{smoothsum}
   \sum_{\substack{m,n\in \mathbb{Z} \\ m-n=h}}  \sigma_{\I}(n)  \sigma_{\J}(n)  F(m,n),
\end{equation}
where $F$ is a smooth function defined on a box $[M,2M] \times [N,2N]$. The main term for $ \mathscr{D}_{ \sigma_{\I}, \sigma_{\J};\omega}(K)$ is expressed in terms of a ``diagonal" contribution and an ``off-diagonal" contribution.  The diagonal contribution 
equals a contour integral involving the
Dirichlet series 
\[
  Z_{\I,\J}(s)=\sum_{m=1}^{\infty}\frac{\sigma_{\I}(m)\sigma_{\J}(m)}{m^{1+s}}.
\]
These contour integrals can be evaluated  similarly to integrals that one encounters in standard applications of Perron's formula.

The most difficult part is the computation of the ``off-diagonal" terms.  They 
may be expressed as a certain average of sums of type \eqref{smoothsum}. On the additive divisor conjecture, conjectural main terms for sums of this type are inserted and a formula for $ \mathscr{D}_{ \sigma_{\I}, \sigma_{\J};\omega}(K)$ is obtained. This idea of considering smoothed sums had originated in the work of \cite{DFI} and later was employed in a similar context in the articles~\cite{HY, Ng, NSW}.  Once the main terms from the additive divisor conjecture are inserted, there is still a lengthy calculation that needs to be done.  One encounters Dirichlet series of the shape 
\begin{equation}\label{H}
H_{\I,\J; \{ a_{i_1} \} , \{ b_{i_2} \}}(s)=
\sum_{r=1}^{\infty}\sum_{q=1}^{\infty} \frac{c_q(r)G_{\I}(1-a_{i_1},q)G_{\J}(1-b_{i_2},q)  }{q^{2-a_{i_1}-b_{i_2}}r^{a_{i_1}+b_{i_2}+s}}
\end{equation}
where $i_1 \in \{1, \ldots, k\}$, $i_2 \in \{1, \ldots, \ell\}$, $c_q(r)$ is the Ramanujan sum, and $G_{\I}(1-a_{i_1},q)$ and $G_{\J}(1-b_{i_2},q)$ are multiplicative functions  that arise from the additive divisor conjecture (see \eqref{eqn:g-mult}
and \eqref{eqn:G-mult} below). Indeed, in some approximate way, 
\[
  G_{\I}(1-a_{i_1},q) \approx  \sigma_{\I \backslash \{ a_{i_1} \}} (q)
  \quad
  \text{ and }
  \quad
  G_{\J}(1-b_{i_2},q) \approx  \sigma_{\J \backslash \{ b_{i_2} \}} (q). 
\]
One requires a meromorphic continuation of the Dirichlet series $H_{\I,\J; \{ a_{i_1} \} , \{ b_{i_2} \}}(s)$  to the region $\Re(s) \ge  -1$. Furthermore, numerous facts about the gamma function are used; including the beta function identity and various versions of Stirling's formula. At the end, the off-diagonal contribution can be expressed as a sum of contour integrals of the functions $H_{\I,\J; \{ a_{i_1} \} , \{ b_{i_2} \}}(s)$. From these expressions, the integrals corresponding to the diagonal and off-diagonal contributions can be evaluated by a contour shift and the residue theorem.

In order to prove Theorem \ref{thm22}, firstly, we will apply the main theorem of \cite{HN} to our special case. The theorem provides a general asymptotic formula in the form
\[
 \mathscr{D}_{ \sigma_{\I}, \sigma_{\J};\omega}(K) \sim \mathcal{M}_{0, \I, \J; \omega}(K)+ \mathcal{M}_{1, \I, \J; \omega}(K)
 \]  
as $K\to \infty$, where the terms on the right-hand side are as in \eqref{M0swaps} and \eqref{M1swaps}. We will prove in Lemma \ref{thm:holomorphy} that both $\mathcal{M}_{0, \I, \J; \omega}(K)$ and $ \mathcal{M}_{1, \I, \J; \omega}(K)$ are holomorphic as functions of elements of the sets $\I= \{ a_1, \ldots, a_k \}$ and $\J = \{ b_1, \ldots, b_{\ell} \}$. Note that if $k=\ell=2$ and $a_j=b_j=0$ for  $j=1,2$, then $ \mathscr{D}_{ \sigma_{\I}, \sigma_{\J};\omega}(K)$ becomes $  \mathscr{D}_{2,2;\omega}(K) $. Upon explicit computations, each of the main terms $ \mathcal{M}_{0, \I, \J; \omega}(K)$ and $ \mathcal{M}_{1, \I, \J; \omega}(K)$ will be expressed as a sum of polar terms in $a, b, a-b$ or $a+b$. We will then carefully analyze all the terms, and show that these polar terms cancel each other while the remaining terms match the ones in our main theorem. 

This idea of working with the shifted divisor functions  $\sigma_{\I}(n)$ and  $ \sigma_{\J}(n)$ and then setting all variables equal to zero was originated by Ingham \cite{In}.  An advantage of this approach is that when computing the residues one only deals with simple poles. Still, it is quite technical to find a formula for the mean value in terms of the shifts and show that that the polar terms are indeed cancelled out. On the other hand, it is also possible to compute $  \mathscr{D}_{2,2;\omega}(K) $ directly, that is, without using any shifts. In that case, one must deal with poles of higher order, so the residue calculations will be more complicated.  


We now comment on the use of the smooth weight functions $w$ and $\varphi$ in our definition of $\mathscr{D}_{k, \ell;\omega}(K)$. Note that the function $\varphi$ appears in the definitions of $\mathds{A}_{a,\varphi}(s)$ and  $\mathds{B}_{b,\varphi}(s)$. Classical forms of the approximate functional equation do not have smooth weights and they have much weaker error terms. In comparison, weighted approximate equations have much smaller error terms (see \cite[(4.20.1),(4.20.2)]{Ti} and \cite[Theorem 5.3]{IK}). By introducing the function $\varphi$, one is able to make use of the Mellin transform instead of Perron's formula. This has the advantage of providing much better decay rates in the resulting complex integrals. The other weight function $\omega$ can be thought of as a smooth approximation to the indicator function $ \mathds{1}_{[T,2T]}(t)$. The purpose of weighing the mean value with such a function is to improve the estimation of the off-diagonal terms. As in \cite[(4.17) and (4.18)]{HN}, for example, employing the bound in \eqref{whatbound} for $\widehat{\omega}$ allows one to dispense of many error terms.

We remind the reader that mean values of long Dirichlet polynomials are known to be closely related to the moments of the Riemann zeta function (see \cite{CG} and  \cite{Iv2, Iv3}). The $(2k)$-th moment is defined as
 \begin{equation*}
     I_{k}(T) = \int_{0}^{T} |\zeta(\tfrac{1}{2}+it)|^{2k} \, dt.
    \end{equation*} 
For $I_2(T)$,  the fourth moment of the Riemann zeta function, Heath-Brown \cite{HB} was the first to show that it is asymptotic to $T\mathcal{P}_4(\log T)$ for a certain polynomial $\mathcal{P}_4$ of degree four as $T\to \infty$. However, he did not compute all coefficients of this polynomial. Later, Conrey \cite{C} gave several formulae for the coefficients of this polynomial. Finally in \cite{CFKRS2}, numerical values for all coefficients of $\mathcal{P}_4$ were provided by Conrey et al. Now, by the formulae in Conjecture \ref{conjCG}, it is proposed that the first few polynomials for the asymptotics of the moments of Dirichlet polynomials are
\begin{align}
  w_2(x)  =  &\,
 -x^4+8x^3-24x^2+32x-14, \notag \\
 w_3(x)  = & \,  -2x^9+27x^8-324x^7+2268x^6-8694x^5 \notag \\
 &\,\, +19278x^4  -25452x^3  +19764x^2
  -8343x+1479.  \notag
\end{align}
As it turns out, the polynomials $w_3(x)$ and $w_4(x)$ are intimately related to the third and the fourth moments of the Riemann zeta-function, respectively. Indeed, the identities 
\begin{equation*}
  w_3(x) + w_3(3-x) = 42
\quad
\text{
and }
\quad
  2w_4(2) = w_4(2) + w_4(2) = 24024
\end{equation*}
led to Conrey and Gonek's conjectures ~\cite{CG}
\[
  I_3(T) \sim \frac{42 a_3}{9!} T \log^9 T 
  \quad
  \text{and} \quad
  I_4(T) \sim \frac{24024 a_4}{16!} T \log^{16} T. 
\]
Their work also provided a heuristic argument showing that $I_k(T)$ could be expressed as a sum of two mean values of long Dirichlet polynomials with $k$-th divisor functions for $k=3,4$ as in Conjecture \ref{conjCG}.  

Finally, we note that with the same approach as in this article, it is likely to establish an asymptotic formula for $   \mathscr{D}_{k,2;\omega}(K) $ for each other integer $k \geq 3$ for some $K=T^{1+\eta_k}$ where $0<\eta_k<1$, by building on the ideas in \cite{Dr} and \cite{T2, T3}. This is current work in progress. However, this approach would not allow one to estimate $ \mathscr{D}_{k,k;\omega}(K)$ for $K \ge T^2$. In \cite{CK4}, \cite{CK5}, Conrey and Keating introduced a method with new divisor sums to estimate $ \mathscr{D}_{k,k;\omega}(K)$ for such $K$. This created a new branch in this area of research, which is active at the present time.

 \noindent {\bf Conventions and Notation.} 
 
 Given two functions $f(x)$ and $g(x)$, we shall interchangeably use the notations  $f(x)=O(g(x))$, $f(x) \ll g(x)$ and $g(x) \gg f(x)$  to mean that there exists $M >0$ such that $|f(x)| \le M |g(x)|$ for all sufficiently large $x$. The statement $f(x) \asymp g(x)$ means that the estimates $f(x) \ll g(x)$ and $g(x) \ll f(x)$ simultaneously hold.  
  
Per our notation, $\varepsilon$  denotes an arbitrarily small positive constant which may vary from instance to instance. The letter $p$ will always be used to denote a prime number. We also adopt the usual notation that for $s \in \mathbb{C}$, its real part is $\sigma =\Re(s)$. The integral notation
\begin{equation*}
   \label{intc}
  \int_{(c)} f(s)\; ds =: \int_{c-i\infty}^{c + i \infty} f(s) \; ds. 
\end{equation*}
for a complex function $f(s)$ and real number $c $  will be used frequently.
 
Give two sequences $( a(n) ), ( b(n) )$, we define their additive convolution $( (a \star b)(n) )$ by 
\begin{equation*}
  \label{addconv}
  (a \star b)(n) = \sum_{
  \substack{ u,v \ge 0 \\ u+v=n}} a(u) b(v).
\end{equation*} 
This is so that
\begin{equation*}
 \label{multts}
 \Big( \sum_{n=0}^{\infty} a(n) X^n \Big) \Big( \sum_{n=0}^{\infty} b(n) X^n \Big) = \sum_{n=0}^{\infty}  (a \star b)(n) X^n 
\end{equation*}
 for a variable $X$. We will also use the notation  $(-1)^{\bullet}$ to denote the sequence $ ( (-1)^n )_{n=0}^{\infty}$. 
 

 \noindent {\bf Organization. } 
 
The plan of our paper is as follows. In Section \ref{sec:prelim} we define some special functions and fix the notation that will be used throughout the paper. In Section \ref{sec:HN old thm}, we recall the main theorem in \cite{HN}, which provides an asymptotic formula for $\mathscr{D}_{ \sigma_{\I}, \sigma_{\J};\omega}(K) $. We prove that the main terms  $\mathcal{M}_{0, \I, \J; \omega}(K)$ and $\mathcal{M}_{1, \I, \J; \omega}(K)$ in this formula  are holomorphic functions of the elements of $ \, \mathcal I $ and $\mathcal J$. Then in Section \ref{sec:proof of mainthm}, we prove Theorem \ref{thm22} by computing  $\mathcal{M}_{0, \I, \J; \omega}(K)+ \mathcal{M}_{1, \I, \J; \omega}(K)$ explicitly in a special case of $|\mathcal I| = |\mathcal J| = 2$. In Appendix \ref{proof of cor}, we show that  Theorem \ref{thm22} will still hold if the weight function $w$ in the mean value is replaced by $1_{[T, 2T]}$ and thus prove Corollary \ref{cor}. Finally in Appendix \ref{coeff-comp}, we rewrite the expressions for $Q_0(x,y), Q_1(x,y), Q_2(x,y)$, and $Q_3(x,y)$ that appear in Theorem \ref{thm22}  in terms of the $\gamma_{j}$ and $\zeta^{(j)}(2)$ for suitable $j$.


\section{Setting and Preliminaries}\label{sec:prelim}

\subsection{Properties of $\varphi$} \label{ss-phi}

For a fixed number $\mu  \in \big(0,\tfrac{1}{2}\big)$, let $\varphi$ be a smooth, non-negative function defined on $\R_{\ge 0}$ such that
\begin{equation}\label{eq:cond varphi}
 \begin{split}
     & \bullet \varphi(t)=1 \quad  \text{for} \quad 0\leq t \leq 1, \\
    &  \bullet \varphi(t)=0  \quad  \text{for} \quad t\geq 1+\mu,\\
    &    \bullet    \varphi^{(j)}(t) \ll \mu^{-j}  \quad \text{for all} \,\, j \geq 0.
\end{split}
\end{equation}
Its Mellin transform is
    \begin{equation}\label{Phi}
     \Phi(s) =\int_{0}^{\infty} \varphi(t) t^{s-1} dt,
    \end{equation}
which converges absolutely for $\Re(s) >0$. 
The function $\Phi$ has an analytic continuation to  the entire complex plane with the exception of a simple pole at $s=0$ with residue $1$.  


 For $c>0$ and $\Re(s) >c$, we define
\begin{equation}\label{phi2}
  \Phi_2(s) = \frac{1}{2 \pi i} \int_{(c)} \Phi(s_1) \Phi(s-s_1) \, ds_1. 
\end{equation}
Observe that 
\begin{equation}
   \label{phi2mtimt}
     \Phi_2(s) = \int_{0}^{\infty} \varphi(t)^2 t^{s-1} \, dt
    \quad \text{ and } \quad
      \varphi(t)^2 = \frac{1}{2 \pi i} \int_{(c)} \Phi_2(s) t^{-s} \, ds \quad
      \text{ for }\,\, c>0.
\end{equation}
Note that $\Phi_2(s)$ has a simple pole at $s=0$. It also satisfies the bound 
\begin{equation} \label{Phibd}
  |\Phi_2(s)|  
   \ll_{m}  \frac{\mu^{1-m} (1+\mu)^{\sigma+m-1} }{|s(s+1) \cdots (s+m-1)|}
\end{equation}
for $m \ge 1$ and $s \in \mathbb{C} \setminus \{ 0, -1, \ldots, -(m-1) \}$.


\subsection{Taylor Expansions of Some  Functions.} \label{subsec: setup entire fncs}

First, we recall the definitions of the functions $f, h$ and $G$:
\begin{equation}\label{eq:fhG}
\begin{split}
        f(s)  = s \, \zeta(1+s) & = \sum_{j=0}^{\infty} g_j s^j, \\
         h(s)= \frac{1}{\zeta(2+s)} & = \sum_{j=0}^{\infty} \delta_j s^j, \\
         G(s)  =  -2 \int_{0}^{\infty}  \varphi(t) \varphi'(t) t^s \, dt &=  \sum_{j=0}^{\infty} c_j s^j. \\
  \end{split}
\end{equation}
We will provide precise formulae for these coefficients, $ g_j, \delta_j,$ and $c_j$. Recall that 
\begin{equation}\label{zeta laurent}
  \zeta(s) = \frac{1}{s-1} + \sum_{j=0}^{\infty} \frac{(-1)^j \gamma_j}{j!} (s-1)^j, 
\end{equation}
where for $j \ge 0$
\[
  \gamma_j = \lim_{m \to \infty}  \Big( \sum_{k=1}^{m} \frac{(\log k)^j}{k} - \frac{(\log m)^{j+1}}{j+1} \Big).
\]
It follows that the function $f(s)=s\zeta(1+s)$ is entire with the Taylor series expansion
\begin{equation*}
  \label{fts}
  f(s) = \sum_{j=0}^{\infty} g_j s^j,
\end{equation*}
where 
\begin{equation}
  \label{gj}
 g_0 = 1  \quad \text{ and } \quad
 g_{j} = \frac{(-1)^{j-1} \gamma_{j-1}}{(j-1)!}  \quad \text{ for } \, \, j \ge 1. 
\end{equation}
Observe that $g_1 =\gamma_0=\gamma$ is Euler's constant.

We also have 
\begin{equation}
  \label{zeta2ts}
  \frac{1}{\zeta(2+s)} = \sum_{j=0}^{\infty} \delta_j s^j,
\end{equation}
where 
\begin{align}
&\delta_0=\frac{1}{\zeta(2)}, \quad \delta_1=-\frac{\zeta'(2)}{\zeta(2)^2}, \quad \delta_2=\frac{ 2(\zeta'(2))^2-\zeta(2)\zeta''(2)}{\zeta(2)^3}, \notag \\
& \delta_3= \frac{-6(\zeta'(2))^3-\zeta'''(2)\zeta(2)^2+6\zeta(2)\zeta'(2)\zeta''(2)}{\zeta(2)^4}, \label{deltaj}
\\
&\delta_4= \frac{24(\zeta'(2))^4-\zeta^{(4)}(2)\zeta(2)^3+6\zeta(2)^2(\zeta''(2))^2+8\zeta'''(2)\zeta(2)^2\zeta'(2)-36\zeta(2)(\zeta'(2))^2\zeta''(2)}{\zeta(2)^5}, \notag
\end{align}
 and in general $\delta_j$ lies in the field generated by the $\zeta^{(j)}(2)$ with $j \in \N$. 

Now, since 
 \begin{equation*}
  G(s)  = -2 \int_{0}^{\infty}  \varphi(t) \varphi'(t) 
    t^s \, dt=s\Phi_2(s),
\end{equation*}
for $\Phi_2(s)$ as defined in \eqref{phi2mtimt}, it follows that $G(s)$ is an entire function and the coefficients of its Taylor series are given by
\begin{equation*}
  \label{cj}
   c_j = \frac{(-2)}{j!} \int_{0}^{\infty} \varphi(t) \varphi'(t) (\log t)^j \, dt \quad \text{for} \quad j=0, 1, \dots. 
\end{equation*}
In particular, we have   $c_0 = -2 \int_{0}^{\infty} \varphi(t) \varphi'(t) \, dt = \varphi(1)^2=1$. 
Furthermore,  we find that
\begin{equation*}
  \label{cjbound}
  |c_j| \ll \mu^{-1} \int_{1}^{1+\mu} \log^j (t) \, dt
  \ll \mu^{-1} \cdot \mu \cdot \max_{t \in [1,1+\mu]} \log^j(t)
  = \log^j(1+\mu) \ll \mu^j
\end{equation*}
as long as $\mu \in [0,1]$. 

We also introduce the following entire functions and their Taylor expansions, which will appear in our calculations in Section \ref{sec:proof of mainthm}.
\begin{equation}\label{eq:notation}
\begin{split}
        F(s)  = sf'(s) - f(s) = & s^2 \zeta'(1+s)  =\sum_{j=0}^{\infty} g_j' s^j ,\quad \text{ where } g_j' = (j-1) g_j  , \\
    H(s)=- \frac{\zeta'}{\zeta^2}(2+s) & = \sum_{j=0}^{\infty} \delta_j' s^j ,\quad \text{ where } \delta_j'= (j+1) \delta_{j+1}, \\
     L=\log\frac{t}{2\pi},\qquad  &Y= \log K,\qquad X=Y-L, \\
   E_1(s)= & e^{sL}  = \sum_{j=0}^{\infty} \alpha_j s^j, \\
   E_2(s)= &  e^{sY}  = \sum_{j=0}^{\infty} \beta_j s^j. \\
  \end{split}
\end{equation}



\section{A Mean Value Theorem Under an Additive Divisor Conjecture}\label{sec:HN old thm}

We now recall the main theorem in \cite{HN}, where the second and the third author established an asymptotic formula for mean values of long Dirichlet polynomials with higher order shifted divisor functions, assuming a smoothed additive divisor conjecture for higher order shifted divisor functions. Before we state this result, we shall introduce some necessary notation and recall the statement of the additive divisor conjecture.

We set  \begin{equation*} \label{KL}
     \K =  \{ 1, \ldots, k \} \quad \text{ and } \quad
     \Ll =  \{ 1, \ldots, \ell \}.
    \end{equation*} 
Throughout this section, $\I$ and $\J$ are multisets of complex numbers indexed by $\K$ and $\Ll$ respectively and are given by 
    \begin{equation*}\label{IJ}
      \I = \{ a_1, \ldots, a_k \}  \quad \text{ and } \quad
      \J = \{b_1, \ldots, b_{\ell} \}, 
    \end{equation*}
    such that
    \begin{equation}\label{sizerestriction}
     |a_i|, |b_j|   \ll \frac{1}{\log T} \quad \text{ for } \,\,  i \in \K \text{ and } j \in \Ll
    \end{equation}
and 
\begin{equation}
  \label{sizerestriction-} 
  |a_{i_1} - a_{i_2}| \gg \frac{1}{\log T} 
 \,\,\, \text{ and } \,\,
  |b_{j_1}-b_{j_2}| \gg \frac{1}{\log T}
 \,\,\, \text{ for } \,\,  i_1 \ne i_2 \in \K \,\, \text{ and } \,\,
  j_1 \ne j_2 \in \Ll,
\end{equation}
for some parameter $T \ge 2$. 

Also, for a set of shifts  $\I = \{ a_1, \ldots, a_k \}$ as before, we define a shifted divisor function as
    \begin{equation*}
    \sigma_{\I}(n) = \sum_{d_1 \cdots d_k=n} d_{1}^{-a_1} \cdots d_{k}^{-a_k}. 
    \end{equation*}
Observe that if $\I=\{0, \ldots, 0\}$, then $\sigma_{\I}(n)=\tau_{k}(n)$.

\subsection{The Additive Divisor Conjecture}

We define the shifted convolution sum
\begin{equation}
  \label{DFIJr}
D_{F; \I,\J}(r)=\sum_{\substack{m,n\geq1\\m-n=r}}\sigma_{\I}(m)\sigma_{\J}(n) F(m,n).
\end{equation} 
Here we assume that for some $X, Y$ and $P \ge 1$  
\begin{equation}
  \label{Fsupport}
  \text{support}(F) \subset [X,2X] \times [Y,2Y]
\end{equation}
and that
\begin{equation} \label{Fcond} 
 x^{m} y^{n} F^{(m,n)}(x,y) \ll_{m,n}
P^{m+n}. 
\end{equation}

For a finite multiset of complex numbers $A=\{a_{1},\cdots,a_{m}\}$ and $s\in\mathbb{C}$,  we define two multiplicative functions $n\mapsto g_{A}(s,n)$ and $n\mapsto G_{A}(s,n)$ by
\begin{equation}\label{eqn:g-mult}
  g_{A}(s,n) = \prod_{p^{e}||n}\bigg(\sum_{j=0}^{\infty} \frac{\sigma_{A}(p^{j+e})}{p^{js}}\bigg)\bigg/\bigg( \sum_{j=0}^{\infty} \frac{\sigma_{A}(p^j)}{p^{js}}\bigg)
\end{equation}
and
\begin{equation}\label{eqn:G-mult}
G_{A}(s,n) = \sum_{d \mid n} \frac{\mu(d) d^s}{\phi(d)}
  \sum_{e \mid d} \frac{\mu(e)}{e^s} g_{A} \Big(s, \frac{ne}{d} \Big).
\end{equation}
Notice that for $n\in\mathbb{N}$, we have 
$$
\sum_{n=1}^{\infty}\frac{\sigma_{A}(jn)}{j^{s}}=g_{A}(s,n)\prod_{a\in A}\zeta(s+a).
$$

We are now prepared to state a conjectural asymptotic formula for $D_{F;\I,\J}(r)$.

\begin{conjecture} \label{conj:shifted-sum}\text{($k$-$\ell$ Additive divisor conjecture)} There exists a triple $(\vartheta_{k,\ell}, C_{k,\ell},\beta_{k,\ell}) \in [\frac{1}{2}, 1) \times [0,\infty) \times (0,1]$ for which the following holds (henceforth to be referred to as $\mathcal{AD}_{k,\ell}(\vartheta_{k,\ell},C_{k,\ell},\beta_{k,\ell})$ conjecture). 

Let $\varepsilon$  be  a positive absolute constant, $P > 1$, and $X,Y > \frac{1}{2}$ satisfy $Y \asymp X$.  Let $F$ be a smooth function satisfying the conditions \eqref{Fsupport} and \eqref{Fcond}, and suppose that $\I = \{ a_1, a_2, \ldots, a_k \}$ and $\J = \{ b_1, \ldots, b_{\ell} \}$ are sets of distinct complex numbers such that $|a_i|, |b_j| \ll \frac{1}{\log X}$ for all
$ i \in \{ 1, \ldots, k \}$ and  $j \in \{ 1, \ldots, \ell \}$. Then for $   D_{F;\I,\J}(r)$ as defined in \eqref{DFIJr}, in the cases where $X$ is sufficiently large (in absolute terms), one has
  \begin{align*}
      D_{F;\I,\J}(r)  =&\,   \sum_{i_1=1}^{k} \sum_{i_2=1}^{\ell} 
    \prod_{j_1 \ne i_1} \zeta(1-a_{i_1}+a_{j_1})   \prod_{j_2 \ne i_2} \zeta(1-b_{i_2}+b_{j_2})   \\
   \times & \sum_{q=1}^{\infty} \frac{c_{q}(r)G_{\I}(1-a_{i_1},q)G_{\J}(1-b_{i_2},q)  }{q^{2-a_{i_1}-b_{i_2}}}
  \int_{\max(0,r)}^{\infty}  f(x,x-r) x^{-a_{i_1}}(x-r)^{-b_{i_2}} dx 
  \\
&+ O \left( P^{C_{k,\ell}} X^{\vartheta_{k,\ell}+\varepsilon} \right)
 \end{align*}
 uniformly for $1 \le |r| \ll X^{\beta_{k,\ell}}$.
 \end{conjecture}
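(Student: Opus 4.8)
The statement asks only for the existence of \emph{some} admissible triple $(\vartheta_{k,\ell},C_{k,\ell},\beta_{k,\ell})$, and in the case $k=\ell=2$ --- the only one actually needed for Theorem~\ref{thm22} --- it is a theorem: the sum \eqref{DFIJr} is then the smoothed binary additive divisor sum, for which Hughes and Young establish exactly such a formula in \cite[Theorem~5.1 and (74)]{HY} via the Duke--Friedlander--Iwaniec $\delta$-method \cite{DFI}, with refinements available from the spectral methods of Aryan \cite{Ar} and Topacogullari \cite{T1}. The plan is to run the $\delta$-method at this level of generality while carrying the (small) shift parameters along.

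First I would open the divisor functions, writing $\sigma_{\I}(m)=\sum_{d_1\cdots d_k=m} d_1^{-a_1}\cdots d_k^{-a_k}$ and similarly for $\sigma_{\J}(n)$, and detect the linear equation $m-n=r$ using the $\delta$-symbol of \cite{DFI}: with $e(x):=e^{2\pi i x}$,
\[
\delta(m-n-r)=\sum_{q\ge 1}\frac{1}{q}\,\chiq e\!\left(\frac{d(m-n-r)}{q}\right)\Delta_q(m-n-r),
\]
where $\Delta_q$ is a smooth archimedean weight essentially supported on $q\ll X^{1/2+\e}$ for the length $X\asymp Y$ fixed by \eqref{Fsupport}. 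Substituting into \eqref{DFIJr} expresses $D_{F;\I,\J}(r)$ as a multilinear sum over $q$, over the divisor variables of $m$ and $n$, and over an additive character, weighted by $F$ and by $\Delta_q$; the derivative hypothesis \eqref{Fcond} guarantees that all the Fourier and Mellin transforms of $F$ that arise decay rapidly, at a cost bounded by powers of $P$.

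Next I would apply Voronoi (equivalently Poisson) summation in the variables carrying $m$ and in those carrying $n$. The zero dual-frequency terms, after executing the $d$-sum (which collapses to the Ramanujan sum $c_q(r)$) and the archimedean integration, assemble into the claimed main term: the boundary/constant terms in the Voronoi expansions of the shifted divisor functions produce the products $\prod_{j_1\ne i_1}\zeta(1-a_{i_1}+a_{j_1})\prod_{j_2\ne i_2}\zeta(1-b_{i_2}+b_{j_2})$, one contribution for each choice of distinguished indices $i_1\in\K$, $i_2\in\Ll$; collating the local congruence data produces the singular series $\sum_q c_q(r)\,G_{\I}(1-a_{i_1},q)\,G_{\J}(1-b_{i_2},q)\,q^{-2+a_{i_1}+b_{i_2}}$, which is precisely the role played by the multiplicative functions in \eqref{eqn:g-mult} and \eqref{eqn:G-mult}; and the archimedean weight of the $\delta$-method produces $\int_{\max(0,r)}^{\infty} f(x,x-r)\,x^{-a_{i_1}}(x-r)^{-b_{i_2}}\,dx$. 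Absolute convergence of the $q$-series in the relevant range of shifts should be checked here, and this is where the restriction $|a_i|,|b_j|\ll 1/\log X$ enters.

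Finally, the nonzero dual frequencies produce a bilinear form in Kloosterman sums $S(m,n;q)$. For $k=\ell=2$ the Weil bound $|S(m,n;q)|\ll q^{1/2+\e}(m,n,q)^{1/2}$ already yields an admissible triple with $\vartheta_{2,2}=3/4$ and explicit $C_{2,2},\beta_{2,2}$ (this is \cite[Theorem~5.1]{HY}); to push $\vartheta_{2,2}$ toward $1/2$ one replaces the Weil bound by the Kuznetsov formula and bounds for sums of Kloosterman sums as in \cite{Ar,T1}. Since $|a_i|,|b_j|\ll 1/\log X$, each perturbing factor such as $x^{-a_{i_1}}$, $d^{-a_i}$ or $q^{a_{i_1}}$, with its variable of size $\ll X$ and its exponent $\ll 1/\log X$, is $O(1)$, so every estimate above is uniform in the shifts, and the main term depends holomorphically on them (compare Lemma~\ref{thm:holomorphy}). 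I expect the genuine obstacle to be this last step once $k+\ell\ge 5$: there one must also control higher Voronoi main terms, and no unconditional power saving for the bilinear Kloosterman form is known with the uniformity in $r$ encoded by $\beta_{k,\ell}$ and a controlled power $C_{k,\ell}$ of $P$ --- which is exactly why the statement is phrased as a conjecture rather than a theorem.
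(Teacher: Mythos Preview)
The statement is a \emph{conjecture}, not a theorem, and the paper does not prove it. The paper simply states Conjecture~\ref{conj:shifted-sum} as an assumption and remarks (just after its statement) that in the particular case $k=\ell=2$ the conjecture is a theorem of Hughes and Young \cite[p.~218]{HY}, who obtain $\mathcal{AD}_{2,2}(\tfrac34,\tfrac54,1)$ via the Duke--Friedlander--Iwaniec $\delta$-method. There is therefore no ``paper's own proof'' to compare your proposal against.

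Your write-up is not really a proof but a plausible high-level outline of how the Hughes--Young argument goes in the $2$-$2$ case, together with an honest admission that for $k+\ell\ge 5$ the required power saving on the bilinear Kloosterman form is not known. As such it does not establish the conjecture for general $k,\ell$, nor does it claim to. If the intent was to supply a proof of the stated conjecture, that is a genuine gap: the conjecture is open in general, and your final paragraph correctly identifies why. If the intent was merely to record that the $k=\ell=2$ case is known and to sketch the method, then you should frame it that way and simply cite \cite{HY} (and \cite{Ar,T1} for improvements), which is exactly what the paper does.
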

 
 We say that $\mathcal{AD}_{k,\ell}(\vartheta_{k,\ell},C_{k,\ell},\beta_{k,\ell})$ holds if the $k$-$\ell$ additive divisor conjecture holds for a triple $(\vartheta_{k,\ell}, C_{k,\ell},\beta_{k,\ell}) \in \big[\tfrac{1}{2}, 1\big) \times [0,\infty) \times (0,1]$. It is important to note that in the case $|\I|=|\J| =2$, Hughes and Young \cite[p.~218]{HY} proved that $\mathcal{AD}_{k,\ell}(\vartheta_{k,\ell},C_{k,\ell},\beta_{k,\ell})$ holds for $\vartheta_{2,2} = \frac{3}{4}$, $C_{2,2} = \frac{5}{4}$ and $\beta_{2,2}=1$ by using  Duke, Friedlander and Iwaniec's $\delta$-method \cite{DFI}. 




\subsection{Mean Values of Long Dirichlet Polynomials with Shifted Divisor Functions as Coefficients} We now consider the mean value of long Dirichlet polynomials associated with the shifted divisor functions $\sigma_{\I}$ and $\sigma_{\J}$ as defined in \eqref{DabwK}. For simplicity, we set
    \[
     \mathscr{D}_{\I,\J;\omega}(K) =
     \mathscr{D}_{\sigma_{\I},\sigma_{\J};\omega}(K).
    \]
   \begin{definition*} 
Let $\I,\J$ be finite multisets of complex numbers. We define $\mathcal{B}(\I,\J)$ as the series 
\begin{align}\label{BIJ}
\mathcal{B}(\I,\J)&=\sum_{n=1}^{\infty}\frac{\sigma_{\I}(n)\sigma_{\J}(n)}{n},
\end{align} 
provided that the series converges (for example, when $\Re(a),\Re(b)>0$ for all $a\in\I$ and $b\in \J$), and by analytic continuation elsewhere.
\end{definition*}
Observe that when the series \eqref{BIJ} converges, we use the multiplicativity of $\sigma_{\I}\sigma_{\J}$ to write
\begin{align*}
  \mathcal{B}(\I,\J)&=\prod_{p}\sum_{u=0}^{\infty}\frac{\sigma_{\I}(p^u)\sigma_{\J}(p^u)}{p^{u}}.
  \end{align*}
Upon factoring out $\prod_{p}  \prod_{\substack{ i \in \K \\ j \in \Ll}  }  (1 - p^{-1-a_i -b_j})^{-1}$  from the right-hand side of this, we obtain 
\begin{equation*}
\begin{split}
  \mathcal{B}(\I,\J)&=\prod_p\prod_{\substack{i \in \K \\ j \in \Ll}}(1 - p^{-1-a_i -b_j})^{-1}\prod_{p}\prod_{\substack{i \in \K \\ j \in \Ll}}(1 - p^{-1-a_i -b_j})\sum_{u=0}^{\infty}\frac{\sigma_{\I}(p^u)\sigma_{\J}(p^u)}{p^{u}}.
\end{split}
\end{equation*}

\begin{definition*}\label{ZA}
For a prime $p$ and $s \in \C$, we set $z_p(s) = (1-p^{-s})^{-1}$. From the local factors $z_{p}(s)$, we define 
\begin{align}
   \label{ZIJ}
     \Z(\I,\J) & =  \prod_{p}\prod_{\substack{i\in \K\\j\in\Ll}} z_{p}(1+a_i+b_j), \\
     \label{AIJ}
      \mathcal{A}(\I,\J) & =  \prod_{p} \prod_{\substack{i\in \K\\j\in\Ll}}  z_{p}^{-1}(1+a_i+b_j)\sum_{u=0}^{\infty}\frac{\sigma_{\I}(p^u)\sigma_{\J}(p^u)}{p^{u}}. 
 \end{align}
\end{definition*}
Observe that we have 
\begin{equation*}
  \label{ZIJid}
   \mathcal{Z}(\I,\J) = \prod_{i \in \K, j \in \Ll} \zeta(1+a_i+b_j)
\end{equation*}
and 
\begin{equation}
  \label{BIJid}
   \mathcal{B}(\I,\J)  =   \mathcal{A}(\I,\J) \mathcal{Z}(\I,\J).
\end{equation}
Next, we require some notation regarding set operations. Given a multiset $U= \{\alpha_1, \ldots, \alpha_n \}$ and $\xi \in \mathbb{C}$, we 
define 
$ U+ \xi = \{\alpha_1+\xi, \ldots, \alpha_n+\xi \}$. We also set $-U=\{-\alpha_1, \ldots, -\alpha_n \}$. With this notation, observe that the identity
\begin{equation}
    \label{sigmaidentity}
    \sigma_{U+\xi}(n) = n^{-\xi} \sigma_{U}(n)
\end{equation}
holds.  

We are now ready to state \cite[Theorem~1.1]{HN}. 

 \begin{theorem}\label{HN-main}
    Let $|\I|=k$ and $|\J|=\ell $ with $ k,\ell \ge 2$, and suppose that both $\I$ and $\J$ satisfy \eqref{sizerestriction} and \eqref{sizerestriction-}.  Assume that 
$\mathcal{AD}_{k,\ell}(\vartheta_{k,\ell},C_{k,\ell},\beta_{k,\ell})$ holds for some triple $(\vartheta_{k,\ell}, C_{k,\ell},\beta_{k,\ell}) \in [\frac{1}{2}, 1) \times [0,\infty) \times (0,1]$. Let $K=T^{1+\eta}$ with $\eta>0$, and let $\omega$ satisfy \eqref{cond1}, \eqref{cond2}, and \eqref{cond3} with $\nu>\tfrac{(1-\beta_{k,\ell})(1+\eta)}{1-\epsilon}$ and $0<\epsilon<1$. Then we have  
\begin{equation}\label{eqn:HN-main}
    \mathscr{D}_{\I,\J;\omega}(K) =   \mathcal{M}_{0, \I, \J; \omega}(K)  +   \mathcal{M}_{1, \I, \J; \omega}(K) 
    +   O \bigg( K^{\vartheta_{k,\ell}+\e} \Big( \frac{T}{T_0} \Big)^{1+C_{k,\ell}} \bigg),
\end{equation}
    where 
        \begin{equation} \label{M0swaps}
          \mathcal{M}_{0, \I, \J; \omega}(K) 
          = \frac{\widehat{\omega}(0)}{2\pi i}\int_{(c)}K^{s}\Phi_{2}(s) \mathcal{B}\left(\I +s,\J\right)\;ds
         \end{equation}
    and
         \begin{equation}
         \begin{split}
          \label{M1swaps}
         \mathcal{M}_{1,\I, \J; \omega}&(K)  \\
          =&\, \int_{0}^{\infty} \omega(t) \sum_{i \in \K,j \in \Ll  }
           \Big( \frac{t}{2 \pi} \Big)^{-a_i -b_j } \mathcal{Z}\big(\I \setminus \{a_i\},\{-a_i\}\big)\mathcal{Z}\big(\{-b_j\},\J \setminus \{ b_j \}\big)
         \\&\times  \frac{1}{2 \pi i}  \int_{(c)} 
         \Phi_2(s)\left(\frac{2\pi K}{t}\right)^{s}\mathcal{Z}\big((\I \setminus \{a_i\})+s, \J \setminus \{b_j\} \big)\zeta(1-a_i-b_j-s)\\
         &\times\mathcal{A} \left( (\I \setminus \{a_i\}) \cup \{ -b_j-s \},   ((\J \setminus \{b_j\})+s)\cup\{-a_i\} \right)\;ds\;dt.
       \end{split}
       \end{equation}
Here $c>0$ is fixed and $\Phi_{2}(s)$ is as defined in \eqref{phi2}. 
\end{theorem}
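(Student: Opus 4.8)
The plan is to expand the product of Dirichlet polynomials, perform the $t$-integration, and split the resulting double sum into a diagonal piece---which will produce $\mathcal{M}_{0,\I,\J;\omega}(K)$ outright---and an off-diagonal piece, to be handled by the additive divisor input $\mathcal{AD}_{k,\ell}$. First I would write
\[
\mathscr{D}_{\I,\J;\omega}(K)=\sum_{m,n\ge1}\frac{\sigma_{\I}(m)\sigma_{\J}(n)}{\sqrt{mn}}\,\varphi\!\Big(\frac{m}{K}\Big)\varphi\!\Big(\frac{n}{K}\Big)\,\widehat{\omega}\!\Big(\tfrac{1}{2\pi}\log\tfrac{m}{n}\Big),
\]
with $\widehat{\omega}$ as in \eqref{ft}. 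Since $\varphi$ is supported on $[0,1+\mu]$ one has $m,n\asymp K$, and by \eqref{whatbound} the pairs with $|\log(m/n)|\gg T_0^{-1+\e}$ contribute $O(T^{-A})$; hence, up to a negligible error, the sum is restricted to $|m-n|\le H:=KT_0^{-1+\e}$.

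For the diagonal $m=n$ I would insert the Mellin representation $\varphi(n/K)^2=\frac{1}{2\pi i}\int_{(c)}\Phi_2(s)(K/n)^s\,ds$ from \eqref{phi2mtimt}, interchange summation and integration, and use \eqref{sigmaidentity} to recognise $\sum_{n\ge1}\sigma_{\I}(n)\sigma_{\J}(n)n^{-1-s}=\mathcal{B}(\I+s,\J)$ (first for $\Re(s)$ large, then by analytic continuation of \eqref{BIJ}). This is exactly $\widehat{\omega}(0)\,\frac{1}{2\pi i}\int_{(c)}K^s\Phi_2(s)\mathcal{B}(\I+s,\J)\,ds=\mathcal{M}_{0,\I,\J;\omega}(K)$ in the form \eqref{M0swaps}.

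For the off-diagonal I would fix $r=m-n$ with $1\le|r|\le H$, treat $r>0$ and $r<0$ symmetrically, and note that the inner sum over $m-n=r$ is precisely $D_{F_r;\I,\J}(r)$ of \eqref{DFIJr}, with $F_r(x,y)=(xy)^{-1/2}\varphi(x/K)\varphi(y/K)\widehat{\omega}\big(\tfrac{1}{2\pi}\log(x/y)\big)$. One checks that $F_r$ satisfies \eqref{Fsupport} with $X\asymp Y\asymp K$ and \eqref{Fcond} with effective conductor $P\ll T/T_0$, the oscillation of $\widehat{\omega}$ (of frequency $\asymp T$) being tempered by the slowly varying argument $\asymp r/K$, so that the logarithmic variation is $\ll Tr/K\ll TH/K=T/T_0$ throughout $|r|\le H$. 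Applying $\mathcal{AD}_{k,\ell}(\vartheta_{k,\ell},C_{k,\ell},\beta_{k,\ell})$---legitimate because $|r|\le H\ll K^{\beta_{k,\ell}}$ under the hypothesis $\nu>\tfrac{(1-\beta_{k,\ell})(1+\eta)}{1-\epsilon}$, and available with $(\vartheta,C,\beta)=(\tfrac34,\tfrac54,1)$ by Hughes--Young when $|\I|=|\J|=2$---replaces each $D_{F_r;\I,\J}(r)$ by its main term together with an error $O\big(P^{C_{k,\ell}}K^{\vartheta_{k,\ell}+\e}\big)$; summing these over $1\le|r|\le H$ (with the sharp bound $\|F_r\|_\infty\ll\min(T/K,1/|r|)$) amounts to $O\big(K^{\vartheta_{k,\ell}+\e}(T/T_0)^{1+C_{k,\ell}}\big)$.

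It remains to sum the additive divisor main terms over $1\le|r|\le H$ and to recognise the total as $\mathcal{M}_{1,\I,\J;\omega}(K)$ in the form \eqref{M1swaps}. Each main term is a finite sum over the ``swaps'' $i_1\in\K$, $i_2\in\Ll$ of a product of $k+\ell-2$ zeta values, times $\sum_{q\ge1}c_q(r)\,q^{-2+a_{i_1}+b_{i_2}}G_{\I}(1-a_{i_1},q)G_{\J}(1-b_{i_2},q)$, times $\int_{\max(0,r)}^{\infty}F_r(x,x-r)x^{-a_{i_1}}(x-r)^{-b_{i_2}}\,dx$. I would carry out the $r$-summation by a Mellin transform, using the classical evaluation of $\sum_{r\ge1}c_q(r)r^{-w}$ to assemble, together with $G_{\I},G_{\J}$ of \eqref{eqn:g-mult}--\eqref{eqn:G-mult}, the Dirichlet series $H_{\I,\J;\{a_{i_1}\},\{b_{i_2}\}}(s)$ of \eqref{H}; then establish its meromorphic continuation to $\Re(s)\ge-1$, carry out the $t$-integration, and evaluate the resulting contour integrals by shifting past the poles and applying the residue theorem, invoking Stirling's formula and the beta-function identity to handle the gamma factors coming from the $x$- and $t$-integrals. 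Assembling everything yields $\mathcal{M}_{0,\I,\J;\omega}(K)+\mathcal{M}_{1,\I,\J;\omega}(K)$ up to the claimed error. I expect this last step to be the main obstacle: the meromorphic continuation of $H_{\I,\J;\{a_{i_1}\},\{b_{i_2}\}}(s)$ to $\Re(s)\ge-1$, together with the gamma-factor analysis needed to reach the closed form of $\mathcal{M}_{1,\I,\J;\omega}(K)$, is the technical core, while the off-diagonal error accounting---pinning down $P\asymp T/T_0$ and organising the $r$-sum against the weights $\|F_r\|_\infty$---is itself delicate.
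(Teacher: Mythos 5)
Your outline follows the same general route as the actual source of this statement: the paper does not prove Theorem \ref{HN-main} but quotes it as \cite[Theorem 1.1]{HN}, and the proof there proceeds exactly along your lines (diagonal term via Mellin inversion giving \eqref{M0swaps}; off-diagonal terms via the additive divisor conjecture, Ramanujan-sum Dirichlet series, meromorphic continuation of $H_{\I,\J;\{a_{i_1}\},\{b_{i_2}\}}(s)$, and contour shifts). However, there are concrete gaps in your reduction to $\mathcal{AD}_{k,\ell}$. First, the claim ``$\varphi$ is supported on $[0,1+\mu]$, hence $m,n\asymp K$'' is false: by \eqref{eq:cond varphi}, $\varphi\equiv1$ on $[0,1]$, so $m,n$ run over all of $[1,(1+\mu)K]$. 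Conjecture \ref{conj:shifted-sum} requires $\mathrm{supp}(F)\subset[X,2X]\times[Y,2Y]$ with $Y\asymp X$, so it cannot be applied in one shot to your $F_r$; one must first insert a smooth dyadic partition of unity (this is precisely the first step of \cite{HN}, emphasized in the introduction of the present paper), apply the conjecture on each box $[N,2N]$ with shift range $|r|\ll NT_0^{-1+\e}\ll N^{\beta_{k,\ell}}$, and redo the error bookkeeping block by block; the stated error does come out, but not from the argument as you wrote it.

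Second, the conductor claim $P\ll T/T_0$ is not correct for the two-variable function $F_r(x,y)=(xy)^{-1/2}\varphi(x/K)\varphi(y/K)\widehat{\omega}\big(\tfrac{1}{2\pi}\log(x/y)\big)$: since $\widehat{\omega}'(u)\asymp T^{2}$ near $u=0$ while $\widehat{\omega}(u)\asymp T$ there, one has $x\partial_x F_r\asymp T\cdot\|F_r\|_\infty$, i.e.\ $P\asymp T$, and with $P\asymp T$ the error $O(P^{C_{k,\ell}}X^{\vartheta_{k,\ell}+\e})$ summed over $r$ ruins \eqref{eqn:HN-main}. The saving to $T/T_0$ comes only from the variation \emph{along the line} $m-n=r$ (the phase $t\log(1+r/n)$ has logarithmic derivative $\asymp Tr/N\ll T/T_0$), so you must define the weight as a function of a single variable on that line (equivalently, extend it constantly in the transverse direction) before invoking \eqref{Fcond}; your justification conflates the two and, as written, the step fails. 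Finally, the entire passage from the additive-divisor main terms to the closed form \eqref{M1swaps} — the $r$-summation against $c_q(r)$, the meromorphic continuation of $H_{\I,\J;\{a_{i_1}\},\{b_{i_2}\}}(s)$ to $\Re(s)\ge-1$, the gamma-factor/Stirling analysis, and the identification of the resulting residues with the $\mathcal{Z}$- and $\mathcal{A}$-factors in \eqref{M1swaps} — is only announced, and this is the technical bulk of the proof in \cite{HN}; as it stands your proposal establishes \eqref{M0swaps} and a plausible error heuristic, but not the theorem.
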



\subsection{Holomorphy of $\mathcal{M}_{0,\I, \J; \omega}(K)$ and $\mathcal{M}_{1,\I, \J; \omega}(K)$}\label{sec:holomorphy}

We will now prove that the main term in the asymptotic formula \eqref{eqn:HN-main} is holomorphic as a function of the shifts $a_1, \dots, a_k, b_1, \dots, b_\ell$. As a consequence of this, in another lemma we will prove that Theorem \ref{HN-main} holds without the restrictions in \eqref{sizerestriction-}.   

\begin{lemma}\label{thm:holomorphy}
Under the hypothesis of Theorem \ref{HN-main} and with the same definitions, the terms  $\mathcal{M}_{0, \I, \J; \omega}(K) $ and $ \mathcal{M}_{1, \I, \J; \omega}(K) $, which are written in \eqref{M0swaps} and \eqref{M1swaps}, respectively, are both holomorphic as functions of the variables $a_1, \dots, a_k, b_1, \dots, b_\ell$. 
\end{lemma}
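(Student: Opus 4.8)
The plan is to show that each apparent singularity of $\mathcal{M}_{0,\I,\J;\omega}(K)$ and $\mathcal{M}_{1,\I,\J;\omega}(K)$ in the shift variables is removable, by examining the contour integrals directly. The potential poles come from two sources: the zeta factors appearing in $\mathcal B$, $\mathcal Z$, and the explicit $\zeta(1-a_i-b_j-s)$; and the possibility that moving the contour of integration past a pole of the integrand produces a pole in the shifts. I would first fix attention on $\mathcal{M}_{0,\I,\J;\omega}(K)$. Recall from \eqref{BIJid} that $\mathcal B(\I+s,\J) = \mathcal A(\I+s,\J)\mathcal Z(\I+s,\J)$, where $\mathcal A$ is given by an absolutely convergent Euler product (hence holomorphic) in a neighborhood of the relevant shift values once $\Re(s)=c>0$ is fixed, and $\mathcal Z(\I+s,\J) = \prod_{i,j}\zeta(1+a_i+b_j+s)$ has, for $\Re(s)=c>0$, no poles at all since $\Re(1+a_i+b_j+s)>1$. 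Thus for $c>0$ fixed the integrand of \eqref{M0swaps} is jointly holomorphic in $(a_1,\dots,a_k,b_1,\dots,b_\ell)$ near the origin, the bound \eqref{Phibd} on $\Phi_2$ gives absolute and locally uniform convergence of the integral, and so $\mathcal{M}_{0,\I,\J;\omega}(K)$ is holomorphic there by Morera's theorem (differentiation under the integral sign). The key point is that keeping the contour at $\Re(s)=c>0$ means we never cross the pole of $\zeta$, so no spurious singularity in the shifts is ever generated.

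For $\mathcal{M}_{1,\I,\J;\omega}(K)$ the argument is the same in spirit but requires more care because several zeta factors now appear outside the $s$-integral — namely $\mathcal Z(\I\setminus\{a_i\},\{-a_i\})$, $\mathcal Z(\{-b_j\},\J\setminus\{b_j\})$, and, inside, $\mathcal Z((\I\setminus\{a_i\})+s,\J\setminus\{b_j\})$ and $\zeta(1-a_i-b_j-s)$. The first two products, e.g. $\prod_{i_1\ne i}\zeta(1-a_i+a_{i_1})$, are genuinely singular as a function of the $a$'s — but only along the diagonals $a_i=a_{i_1}$, which is exactly the locus excluded by \eqref{sizerestriction-}; for $\I,\J$ with distinct entries these are finite nonzero constants, and the product over the full $i\in\K$, $j\in\Ll$ sum must be checked to have no remaining pole. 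Here I would exploit the symmetry of the sum over $i$ and $j$: the sum $\sum_{i,j}(\cdots)$ is, by construction (as in the Ingham/CFKRS formalism recalled in the introduction), the unique combination whose apparent poles along $a_i=a_{i'}$ and $b_j=b_{j'}$ cancel in pairs — one shows that the residue of the $i$-th summand along $a_i=a_{i'}$ is the negative of the residue of the $i'$-th summand, because swapping $i\leftrightarrow i'$ negates $(t/2\pi)^{-a_i-b_j}$-type factors against a sign from $\zeta(1-a_i+a_{i'})\sim 1/(a_{i'}-a_i)$. Inside the $s$-integral, for $\Re(s)=c>0$ chosen appropriately the factor $\mathcal Z((\I\setminus\{a_i\})+s,\J\setminus\{b_j\})$ is pole-free, $\zeta(1-a_i-b_j-s)$ is pole-free (its pole is at $s=-a_i-b_j$, to the left of the contour), and $\mathcal A(\cdots)$ is an absolutely convergent Euler product, so the inner integrand is holomorphic in the shifts near $0$; combined with \eqref{Phibd} and condition \eqref{cond2} on $\omega$ (compact support, so the $t$-integral is over a bounded set) one gets a locally uniformly convergent integral and applies Morera again.

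The main obstacle I anticipate is the cancellation of the diagonal poles in $\mathcal{M}_{1}$: one must verify carefully that the product $\mathcal Z(\I\setminus\{a_i\},\{-a_i\})\,\mathcal Z(\{-b_j\},\J\setminus\{b_j\})$ times the factor $\mathcal Z((\I\setminus\{a_i\})+s,\J\setminus\{b_j\})\,\zeta(1-a_i-b_j-s)\,\mathcal A(\cdots)$ has, at a near-diagonal shift configuration $a_i\to a_{i'}$, a residue that is exactly cancelled by the corresponding term with $i$ and $i'$ interchanged — and similarly for the $b$'s. This is the standard ``swap'' symmetry behind the notation $\mathcal{M}_{0}, \mathcal{M}_{1}$ with the word ``swaps'' in \eqref{M0swaps}, \eqref{M1swaps}, but making it rigorous means tracking how the factor $\mathcal A$ (through the $z_p^{-1}$ normalizations) and the extra $\zeta(1-a_i-b_j-s)$ transform under the swap, and confirming the residues have opposite sign. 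Once that cancellation is established on the complement of the diagonals, the singularities along $a_i=a_{i'}$, $b_j=b_{j'}$ are removable, Riemann's removable-singularity theorem upgrades joint continuity to joint holomorphy, and the lemma follows. A technical point worth isolating as a sub-step: justifying the interchange of the residue computation (a limit in the shifts) with the $s$- and $t$-integrations, which again follows from the uniform bound \eqref{Phibd} on $\Phi_2$ and the compact support of $\omega$.
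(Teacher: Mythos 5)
Your treatment of $\mathcal{M}_{0,\I,\J;\omega}(K)$ is fine and is essentially the paper's: with the contour fixed at $\Re(s)=c>0$ the integrand of \eqref{M0swaps} is holomorphic in the shifts and the bound \eqref{Phibd} lets you differentiate under the integral. The problem is $\mathcal{M}_{1,\I,\J;\omega}(K)$. You correctly identify the real difficulty — the apparent poles of the swap sum in \eqref{M1swaps} along the diagonals $a_i=a_{i'}$, $b_j=b_{j'}$ — but your proposal does not actually resolve it: the pairwise cancellation of residues is asserted rather than proved (and the stated mechanism, the $(t/2\pi)^{-a_i-b_j}$ factor being ``negated'' against the sign of the zeta residue, is not the right bookkeeping; what must be checked is that all non-singular factors of the $i$-th and $i'$-th summands, including the inner $s$-integral and the $\mathcal{A}$-factor, agree on the diagonal while $\zeta(1+a_{i'}-a_i)$ and $\zeta(1+a_i-a_{i'})$ carry opposite singular parts). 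More seriously, even granting pairwise cancellation, your plan to finish with the Riemann removable-singularity theorem requires local boundedness near the singular set, and at points where three or more shifts coincide — in particular at $a_1=\cdots=a_k=b_1=\cdots=b_\ell=0$, the only point you ultimately need — several hyperplanes of the singular locus meet and many terms blow up simultaneously; pairwise cancellation of simple poles does not by itself give boundedness there. As written, ``once that cancellation is established'' is the proof, and it is missing.

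The paper supplies exactly the missing device (following the argument of Baluyot--Turnage-Butterbaugh): assuming first that the shifts are distinct, it recognizes the sum over swaps $(i_0,j_0)$ as the output of the residue theorem applied to a double contour integral in auxiliary variables $z_1,z_2$ over small circles $|z_1|=|z_2|=c/4$ — i.e. it runs Cauchy's theorem backwards — since the poles of that integrand in $(z_1,z_2)$ are precisely the simple poles at $z_1=-a_i$, $z_2=-b_j$. After verifying absolute convergence of the Euler product (so the $\mathcal{A}$-type factor is harmless), the integrand of this representation is manifestly holomorphic in the $a_i,b_j$, so no diagonal cancellation ever has to be checked, and the case of coincident shifts follows by analytic continuation. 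Your approach could in principle be completed, but completing it rigorously essentially forces you to introduce such an integral representation (or a polydisc Cauchy-formula argument as in Lemma \ref{thm:remove-size-cond}); without that idea the proposal has a genuine gap at its central step.
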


\begin{proof}
We follow the argument that was employed in \cite[Section 11]{BTB}. Recall that $a_i, b_j \ll \frac{1}{\log T}$ for all $i \in \mathcal{K}$ and $j \in \mathcal{L}$. 

First, we consider $\mathcal{M}_{0,\I, \J; \omega}(K)$. We repeat \eqref{M0swaps} here.
    \begin{equation*}
          \mathcal{M}_{0, \I, \J; \omega}(K) 
          = \frac{\widehat{\omega}(0)}{2\pi i}\int_{(c)}K^{s}\Phi_{2}(s)\mathcal{B}\left(\I+s,\J\right)\;ds,
    \end{equation*}
where, as in \eqref{BIJid} , we have
    \[
     \mathcal{B}(\I+s,\J)  =   \mathcal{A}(\I+s,\J) \mathcal{Z}(\I+s,\J)
    \]
with
    \[
    \mathcal{A}(\I+s,\J)
    =\prod_{p}\prod_{x \in \I+s, y \in \J} \bigg(1-\frac{1}{p^{1+x+y}}\bigg) \sum_{u=0}^\infty \frac{\sigma_{\I+s}(p^u) \sigma_{\J}(p^u)}{p^u}
    \]
and
    \[
    \mathcal{Z}(\I+s,\J)= \prod_{x \in \I+s, y \in \J} \zeta(1+x+y). 
    \]
From \eqref{BIJ} and \eqref{sigmaidentity}, we see that $ \mathcal{B}(\I+s,\J)$ is holomorphic as a function of the variables $a_i$ and $b_j$ due to the restriction on the size of $a_i$ and $b_j$. Thus $  \mathcal{M}_{0,\I, \J; \omega}(K)$ is also holomorphic in the $a_i$ and the $b_j$. 

We proceed with $\mathcal{M}_{1,\I, \J; \omega}(K)$, which, by \eqref{M1swaps}, is given as
         \begin{align*}
         \mathcal{M}_{1,\I, \J; \omega}(K)  
         & = \int_{0}^{\infty} \omega(t) \sum_{i_0 \in \K, j_0 \in \Ll  }
           \Big( \frac{t}{2 \pi} \Big)^{-a_{i_0} -b_{j_0} } \mathcal{Z}\big(\I \setminus \{a_{i_0}\},\{-a_{i_0}\}\big)
           \mathcal{Z}\big(\{-b_{j_0}\},\J \setminus \{ b_{j_0}\}\big)
         \\
          \times  &\frac{1}{2 \pi i}  \int_{(c)} 
         \Phi_2(s)\left(\frac{2\pi K}{t}\right)^{s}\mathcal{Z}\big((\I \setminus \{a_{i_0}\})+s, \J \setminus \{b_{j_0}\} \big)\zeta(1-a_{i_0}-b_{j_0}-s)\\
         &\hspace{5em}\times \mathcal{A} \big( (\I \setminus \{a_{i_0}\}) \cup \{ -b_{j_0}-s \},   ((\J \setminus \{b_{j_0}\})+s)\cup\{-a_{i_0}\} \big) \;ds\;dt.
       \end{align*}
For now, we assume that both sets $\mathcal{I}$ and $\mathcal{J}$ have distinct elements and that their intersection is empty. We expand each $ \mathcal{Z}$-term and $ \mathcal{A}$-term in the above. By definition, 
\begin{equation}\label{eq:product of zetas 1}
 \mathcal{Z}\big(\I \setminus \{a_{i_0}\},\{-a_{i_0}\}\big)
 = \prod_{\substack{x\in \mathcal{I}\setminus \{a_{i_0}\},\\ y \in \{-a_{i_0}\} }}
 \zeta(1+x+y)
 = \prod_{i \neq i_0} \zeta(1+a_i-a_{i_0})
       \end{equation}
and 
\begin{equation}\label{eq:product of zetas 2}
 \mathcal{Z}\big(\{-b_{j_0}\},\J \setminus \{ b_{j_0}\}\big)
 =  \prod_{\substack{x\in \mathcal{J}\setminus \{b_{j_0}\},\\ y \in \{-b_{j_0}\} }}
 \zeta(1+x+y)
 = \prod_{j \neq j_0} \zeta(1+b_j-b_{j_0}) .
       \end{equation}
Also, by an argument of inclusion-exclusion we have
     \begin{equation}\label{eq:product of zetas 3}
     \begin{split}
      \mathcal{Z}\big(\{-b_{j_0}\},\J \setminus \{ b_{j_0}\}\big)
      = 
    & \prod_{\substack{i\in \mathcal{K}, j\in \mathcal{L},\\ i\neq i_0, j\neq j_0}}  \zeta(1+a_i+s+b_j)
     \\
    =& \frac{ \prod_{i\in \mathcal{K}, j\in \mathcal{L}} \zeta(1+a_i+s+b_j)}
   { \prod_{i\in \mathcal{K}}  \zeta(1+a_{i}+s+b_{j_0})   
    \prod_{j\in \mathcal{L}} \zeta(1+a_{i_0}+s+b_j)   }
    \zeta(1+a_{i_0}+s+b_{j_0}). 
       \end{split}  
       \end{equation}
For the $\mathcal A$-term as defined via \eqref{AIJ}, we note the following for its Euler product part.  
     \begin{align*}
       \prod_{\substack{x \in (\I \setminus \{a_{i_0}\}) \cup \{ -b_{j_0}-s \}, \\ y \in 
       ((\J \setminus \{b_{j_0}\})+s)\cup\{-a_{i_0}\} }} 
       \bigg(1-\frac{1}{p^{1+x+y}}\bigg)  
       =
         \prod_{\substack{x \in  \I \setminus \{a_{i_0}\}  , \\ y \in (\J \setminus \{b_{j_0}\})+s }}  
         \bigg(1-\frac{1}{p^{1+x+y}}\bigg)   
      \prod_{\substack{x \in  \I \setminus \{a_{i_0}\}, \\ y \in \{-a_{i_0}\} }}
       \bigg(1-\frac{1}{p^{1+x+y}}\bigg)  & \\
      \times \prod_{\substack{x \in  \{ -b_{j_0}-s \}, \\ y \in   (\J \setminus \{b_{j_0}\})+s }}
       \bigg(1-\frac{1}{p^{1+x+y}}\bigg)  
       \prod_{\substack{x \in  \{ -b_{j_0}-s \}, \\ y \in  \{-a_{i_0}\} }}
       \bigg(1-\frac{1}{p^{1+x+y}}\bigg).
       \end{align*}
Again by inclusion-exclusion, this can also be written as
     \begin{equation}\label{eq:1 - 1 over p}
     \begin{split}
       \prod_{\substack{x \in (\I \setminus \{a_{i_0}\}) \cup \{ -b_{j_0}-s \}, \\ y \in 
       ((\J \setminus \{b_{j_0}\})+s)\cup\{-a_{i_0}\} }} 
       \bigg(1-\frac{1}{p^{1+x+y}}\bigg)  
       = \prod_{i\in \mathcal{K}, j\in \mathcal{L}} \bigg(1-\frac{1}{p^{1+a_i+b_j+s}}\bigg)           
    \prod_{i\in \mathcal{K}}    \bigg(1-\frac{1}{p^{1+a_i+b_{j_0}+s}}\bigg)^{-1}   \\
    \times \prod_{j\in \mathcal{L}} \bigg(1-\frac{1}{p^{1+a_{i_0}+b_j+s}}\bigg)^{-1} 
     \times \bigg(1-\frac{1}{p}\bigg)^{-2}
      \prod_{i\in \mathcal{K}}   \bigg(1-\frac{1}{p^{1+a_i-a_{i_0}}}\bigg) &  \\
       \times  \prod_{j\in \mathcal{L}}   \bigg(1-\frac{1}{p^{1+b_j-b_{j_0}}}\bigg) 
      \bigg(1-\frac{1}{p^{1+a_{i_0}+b_{j_0}+s}}\bigg) 
     \bigg(1-\frac{1}{p^{1-a_{i_0}-b_{j_0}-s}}\bigg).  &
       \end{split}
       \end{equation}  
In view of these expressions, it will be useful to define for each prime $p$ 
    \begin{equation*}
    \begin{split} 
     \mathcal{P}(p) = &\, \, \mathcal{P}(z_1, z_2, s; p) \\
     := &   \prod_{i \in \K, j\in \mathcal{L}} \bigg(1-\frac{1}{p^{1+a_i+b_j+s}}\bigg)  
           \prod_{i \in \K} \bigg(1-\frac{1}{p^{1+a_i-z_2+s}}\bigg)^{-1}
      \prod_{j \in \mathcal{L}} \bigg(1-\frac{1}{p^{1+b_j-z_1+s}}\bigg)^{-1} \\
    \times   \bigg(1-\frac1p\bigg)^{-2}  &  \prod_{i \in \K} \bigg(1-\frac{1}{p^{1+a_i+z_1}}\bigg) 
         \prod_{j \in \mathcal{L}} \bigg(1-\frac{1}{p^{1+b_j+z_2}}\bigg)  
    \bigg(1-\frac{1}{p^{1-z_1-z_2+s}}\bigg)  \bigg(1-\frac{1}{p^{1+z_1+z_2-s}}\bigg)  .
    \end{split} 
    \end{equation*} 
By using Cauchy's theorem, we can now write $\mathcal{M}_{1,\I, \J; \omega}(K)$ as a sum of residues and thus as an integral. We have by \eqref{eq:product of zetas 1}, \eqref{eq:product of zetas 2} and \eqref{eq:product of zetas 3}
     \begin{equation}\label{eq:M1 as  residue}
     \begin{split}
     \mathcal{M}_{1,\I, \J; \omega}(K)
     = \int_0^\infty \omega(t)  \sum_{i \in \K, j\in \mathcal{L}} \bigg(\frac{t}{2\pi}\bigg)^{z_1+z_2} 
    \frac{1}{2\pi i}  \int_{(c)} 
       \Phi_2(s) \Big(\frac{2\pi K}{t}\Big)^{s}  & \\
    \frac{1}{(2\pi i)^2}  \int_{|z_1|= \frac{c}{4}}\int_{|z_2|= \frac{c}{4}} 
    \prod_{i\in \mathcal{K}}  \zeta(1+z_1+a_{i}) 
     \prod_{j\in \mathcal{L}}  \zeta(1+z_2+b_{j})   & \\
    \times \, \zeta(1+z_1+z_2-s)  \zeta(1-z_1-z_2+s)  & \\
     \times \,  \frac{\prod_{i\in \mathcal{K}, j\in \mathcal{L}}  \zeta(1+a_i+b_j+s)}
    {\prod_{i\in \mathcal{K}}  \zeta(1+a_{i}-z_2+s)   
    \prod_{j\in \mathcal{L}} \zeta(1-z_1+b_{j}+s) } & \\
    \times \,  \prod_{p}  \mathcal{P}(p)       
    \sum_{u=0}^\infty \frac{\sigma_{(\I \setminus \{-z_1\}) \cup \{z_2-s \}}(p^u) \sigma_{ ((\J \setminus \{-z_2\})+s)\cup\{z_1\}}(p^u)}{p^u} dz_1 dz_2 ds dt. &
    \end{split}
    \end{equation}
This is because the pairs $z_1=-a_{i}$ and $z_2=-b_{j}$ for $i \in \mathcal{K}, j \in \mathcal L$ are the only poles of the above integrand, all of which are simple. 

Moreover, the integrand is holomorphic as a function of the $a_i, b_j$ whenever they are distinct as per our assumption. This is clear to see for the part of the integrand that involves $\zeta$-values. It thus remains to show that the Euler product in the above converges absolutely. For this, note that by \eqref{eq:1 - 1 over p} we have
    \begin{equation}\label{eq: P p expanded}
    \begin{split}                
&\mathcal{P}(p)       
    \sum_{u=0}^\infty \frac{\sigma_{(\I \setminus \{-z_1\}) \cup \{ z_2-s \}}(p^u) \sigma_{ ((\J \setminus \{-z_2\})+s)\cup\{z_1\}}(p^u)}{p^u} \\
=
& \prod_{\substack{x \in (\I \setminus \{-z_1\}) \cup \{z_2-s \}, \\ y \in 
     ((\J \setminus \{-z_2\})+s)\cup\{z_1\} }} \bigg(1-\frac{1}{p^{1+x+y}}\bigg)       
      \sum_{u=0}^\infty 
   \frac{\sigma_{(\I \setminus \{-z_1\}) \cup \{z_2-s \}}(p^u) \sigma_{ ((\J \setminus \{-z_2\})+s)\cup\{z_1\}}(p^u)}{p^u}  \\
= & \bigg(1+ \frac{  \sigma_{(\I \setminus \{-z_1\}) \cup \{z_2-s \}}(p) \sigma_{ ((\J \setminus \{-z_2\})+s)\cup\{z_1\}}(p)}{p} \bigg)
     \prod_{\substack{x \in (\I \setminus \{-z_1\}) \cup \{z_2-s \}, \\ y \in 
     ((\J \setminus \{-z_2\})+s)\cup\{z_1\} }}  \bigg(1-\frac{1}{p^{1+x+y}}\bigg)      \\
 &+O_{\varepsilon}\bigg(\frac{1}{p^{2-8c+\varepsilon}}\bigg).        
    \end{split}      
    \end{equation}          
In the last step, we used the estimate
\begin{equation}\label{eq:sigma sum}
      \sum_{u=2}^\infty 
   \frac{\sigma_{(\I \setminus \{-z_1\}) \cup \{z_2-s \}}(p^u) \sigma_{ ((\J \setminus \{-z_2\})+s)\cup\{z_1\}}(p^u)}{p^u} 
     \ll_{\varepsilon} \frac{1}{p^{2-8c+\varepsilon}} \quad \text{ for suitable } \, \varepsilon>0.
\end{equation}
This estimate follows from the fact that for any $\varepsilon>0$
\[
\sigma_{(\I \setminus \{-z_1\}) \cup \{z_2-s \}}(p^u) 
\ll_{\varepsilon} p^{u\big(-\min_{v \in (\I \setminus \{-z_1\}) \cup \{z_2-s \} } \{\Re v \} +\varepsilon\big)} 
\ll p^{u(2c+\varepsilon)}
\]
since
\[
\Re(v) \gg -c -\frac{c}{4} \geq -2c \quad \text{for} \quad v \in (\I \setminus \{-z_1\}) \cup \{z_2-s \},
\]
and the similar estimate
\[
 \sigma_{ ((\J \setminus \{-z_2\})+s)\cup\{z_1\}}(p^u) \ll_{\varepsilon} p^{u(2c+\varepsilon)}.
\]
On the other hand, as in \cite[(9.1.4)]{BTB}, we observe that for suitable $\varepsilon >0$ 
    \begin{align*}   
     & \prod_{\substack{x \in (\I \setminus \{-z_1\}) \cup \{z_2-s \}, \\ y \in 
     ((\J \setminus \{-z_2\})+s)\cup\{z_1\} }}  \bigg(1-\frac{1}{p^{1+x+y}}\bigg)    
     \\
     =&\,  1-   \frac{  \sigma_{(\I \setminus \{-z_1\}) \cup \{z_2-s \}}(p) \sigma_{ ((\J \setminus \{-z_2\})+s)\cup\{z_1\}}(p)}{p}
      + O\bigg(\frac{1}{p^{1+\varepsilon}}\bigg). 
    \end{align*}     
By combining this with \eqref{eq:sigma sum}, we obtain
     \begin{align*}  
&  \bigg(1+    \frac{  \sigma_{(\I \setminus \{-z_1\}) \cup \{z_2-s \}}(p) \sigma_{ ((\J \setminus \{-z_2\})+s)\cup\{z_1\}}(p)}{p} \bigg)
     \prod_{\substack{x \in (\I \setminus \{-z_1\}) \cup \{z_2-s \}, \\ y \in 
     ((\J \setminus \{-z_2\})+s)\cup\{z_1\} }} \bigg(1-\frac{1}{p^{1+x+y}}\bigg)    \\
    = & \,\, 1+ O\bigg(\frac{1}{p^{1+\varepsilon}}\bigg)+O_{\varepsilon}\bigg(\frac{1}{p^{2-8c+\varepsilon}}\bigg). 
          \end{align*}   
for $\varepsilon>0$ sufficiently small. Finally by \eqref{eq: P p expanded} and by choosing $c>0$ suitably, we deduce that the Euler product in \eqref{eq:M1 as  residue} converges absolutely, hence it is holomorphic in the $a_i$ and $b_j$. Therefore, we have shown that if both $\mathcal I$ and $ \mathcal J$ have no repeated elements and that they don't have any elements in common, then the right-hand side of \eqref{eq:M1 as  residue} is a holomorphic function of the $a_i$ and $b_j$. By analytic continuation, the same expression, and thus $ \mathcal{M}_{1,\I, \J; \omega}(K)$, is a holomorphic function of the shifts $a_1, \dots, a_k, b_1, \dots, b_\ell$ that satisfy the condition $a_i, b_j \ll \frac{1}{\log T}$ for all $i, j$. 
\end{proof}

\begin{lemma}\label{thm:remove-size-cond}
 Theorem \ref{HN-main} holds without assuming the size restriction in \eqref{sizerestriction-}.\end{lemma}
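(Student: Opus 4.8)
The plan is to deduce Lemma~\ref{thm:remove-size-cond} from Lemma~\ref{thm:holomorphy} together with a soft Cauchy integral argument in the shift variables. First I would record that the left-hand side $\mathscr{D}_{\I,\J;\omega}(K)$ is an \emph{entire} function of $a_1,\dots,a_k,b_1,\dots,b_\ell$: since $\varphi(n/K)=0$ for $n>(1+\mu)K$, the Dirichlet polynomials $\mathds{A}_{\sigma_\I,\varphi}(\tfrac12+it)$ and $\mathds{B}_{\sigma_\J,\varphi}(\tfrac12-it)$ are finite sums whose coefficients $\sigma_\I(n),\sigma_\J(n)$ are entire in the shifts, and $\omega$ is supported on the compact interval $[c_1T,c_2T]$. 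By Lemma~\ref{thm:holomorphy}, $\mathcal{M}_{0,\I,\J;\omega}(K)+\mathcal{M}_{1,\I,\J;\omega}(K)$ is holomorphic on the polydisc $\mathcal U=\{|a_i|<C_0/\log T,\ |b_j|<C_0/\log T\}$ for a suitable constant $C_0>0$. Hence the error term
\[
 \mathcal E(\I,\J):=\mathscr{D}_{\I,\J;\omega}(K)-\mathcal{M}_{0,\I,\J;\omega}(K)-\mathcal{M}_{1,\I,\J;\omega}(K)
\]
is holomorphic on $\mathcal U$, and by Theorem~\ref{HN-main} it satisfies $|\mathcal E(\I,\J)|\ll K^{\vartheta_{k,\ell}+\e}(T/T_0)^{1+C_{k,\ell}}$ uniformly on the subset of $\mathcal U$ where \eqref{sizerestriction-} also holds.

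Next I would transfer this bound to points where \eqref{sizerestriction-} fails. Given $(\I,\J)$ satisfying only \eqref{sizerestriction}, say $|a_i|,|b_j|\le c/\log T$ (one may take $c$ as small as convenient, as this only weakens the hypothesis), I would choose radii $r_i=\tfrac{c_\ast}{\log T}(1+\tfrac{i}{2k})$ and $s_j=\tfrac{c_\ast}{\log T}(1+\tfrac{j}{2\ell})$ with $c<c_\ast<C_0$, so that $c/\log T<r_i,s_j<C_0/\log T$ for all $i,j$. Then $(\I,\J)$ lies in the open polydisc $\prod_i\{|a_i|<r_i\}\times\prod_j\{|b_j|<s_j\}$, whose defining polycircle $\Gamma=\prod_i\{|a_i|=r_i\}\times\prod_j\{|b_j|=s_j\}$ has its closed polydisc contained in $\mathcal U$; moreover on $\Gamma$ one has $|a_i|=r_i\ll 1/\log T$ and $|a_i-a_{i'}|\ge\big||a_i|-|a_{i'}|\big|=|r_i-r_{i'}|\gg 1/\log T$ for $i\ne i'$, and similarly for the $b_j$, so both \eqref{sizerestriction} and \eqref{sizerestriction-} hold on $\Gamma$ and the bound for $\mathcal E$ applies there. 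Applying the $(k+\ell)$-variable Cauchy integral formula over $\Gamma$ then gives
\[
 |\mathcal E(\I,\J)|\le\prod_{i=1}^k\frac{r_i}{r_i-|a_i|}\prod_{j=1}^\ell\frac{s_j}{s_j-|b_j|}\cdot\max_{\Gamma}|\mathcal E|\ll K^{\vartheta_{k,\ell}+\e}\Big(\frac{T}{T_0}\Big)^{1+C_{k,\ell}},
\]
since $r_i-|a_i|\gg 1/\log T\asymp r_i$ and $k,\ell$ are fixed, so the kernel factor is $O(1)$. This is exactly \eqref{eqn:HN-main} at $(\I,\J)$ with \eqref{sizerestriction-} removed, which is the assertion.

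Since the argument is essentially soft, I do not expect a genuine obstacle; the one point needing care is the simultaneous choice of the radii $r_i,s_j$, which must (i) lie strictly inside the domain $\mathcal U$ of holomorphy, (ii) strictly enclose the evaluation point $(\I,\J)$, and (iii) have pairwise gaps $\gg 1/\log T$ so that \eqref{sizerestriction-} is in force on $\Gamma$. All three are achievable with uniform implied constants precisely because every relevant length scale is comparable to $1/\log T$ and the cardinalities $k,\ell$ are bounded. I would also note in passing that the error term in Theorem~\ref{HN-main} is uniform over all shifts obeying \eqref{sizerestriction} and \eqref{sizerestriction-}, which is what makes the maximum of $|\mathcal E|$ over $\Gamma$ meaningful.
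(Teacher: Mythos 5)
Your proposal is correct and follows essentially the same route as the paper: both deduce the lemma from Lemma~\ref{thm:holomorphy} by applying the several-variable Cauchy integral formula over a polycircle on which \eqref{sizerestriction} and \eqref{sizerestriction-} hold, so that the error bound of Theorem~\ref{HN-main} can be imported and the kernel factor is $O(1)$; the paper merely centers its discs at the shifts $a_j,b_j$ with radii $2^{j+1}C_0/\log T$, whereas you center circles at the origin with staggered radii, a cosmetic difference. The only nitpick is that your largest radius is $\tfrac{3c_\ast}{2\log T}$, so you should require $C_0>\tfrac{3}{2}c_\ast$ rather than just $c_\ast<C_0$, which is harmless since $C_0$ may be taken as large as needed.
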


 \begin{proof}
 We follow the argument that was employed in \cite[Section~5]{Ng}.  We set ${\bf{a}}=(a_1,a_2,\dots,a_k)$ and ${\bf{b}}=(b_1,b_2,\dots,b_{\ell})$. We also let $L({\mathbf a},{\mathbf b})=  \mathscr{D}_{\I,\J;\omega}(K)$ and $R({\bf a},{\bf b})=\mathcal{M}_{0, \I, \J; \omega}(K)  +   \mathcal{M}_{1, \I, \J; \omega}(K)$ for convenience. By Theorem \ref{HN-main}, we know that 
 \begin{equation}\label{eqn:error-bd}L({\mathbf a},{\mathbf b})-R({\mathbf a},{\mathbf b})=O \bigg( K^{\vartheta_{k,\ell}+\e} \Big( \frac{T}{T_0} \Big)^{1+C_{k,\ell}} \bigg),\end{equation}
 provided that conditions \eqref{sizerestriction} and \eqref{sizerestriction-} are satisfied.
 By Definition \ref{DabwK}  and Lemma \ref{thm:holomorphy}, we also know that  $L({\bf a},{\bf b})-R({\bf a},{\bf b})$ is holomorphic as a function of the variables $a_1,\dots,a_k,b_1,\dots,b_{\ell}$.
 
 Suppose that $a_1,\dots,a_k,b_1,\dots,b_{\ell}$ are complex numbers satisfying $|a_{j}|,|b_j|\leq \frac{C_0}{\log T}$ for some positive constant $C_0$. Consider the polydisc $D\subset\mathbb{C}^{k+\ell}$ given by 
 \[
 D=\prod_{j=1}^kD_j\prod_{j=1}^{\ell}\tilde{D}_j,
 \]
 where 
 \[
 D_j=\{z\in\mathbb{C}:|z-a_j|\leq r_j\},\quad 
 \tilde{D}_j=\{z\in\mathbb{C}:|z-b_j|\leq r_j\}
\quad
\text{and}
\quad
 r_j=\frac{2^{j+1}C_0}{\log T}.
 \]

 Let $\partial{D}_j$ and $\partial{\tilde{D}_j}$ be the boundaries of the discs $D_j$ and $\tilde{D}_j$ respectively. By Cauchy's integral formula, we have
 \begin{equation}\label{eqn:cauchy}
 L({\mathbf a},{\bf b})-R({\bf a},{\mathbf b})=\frac{1}{(2\pi i)^{k+\ell}}\int_{\partial{D}_1}\cdots\int_{\partial{D}_{k}}\int_{\partial{\tilde{D}}_1}\cdots\int_{\partial{\tilde{D}}_{\ell}}\frac{L({\mathbf{z}},{\mathbf{w}})-R({\mathbf{z}},{\mathbf{w}})}{({\mathbf z}-{\mathbf a})({\mathbf w}-{\mathbf b})}\;{\mathbf{dz}}\; {\mathbf{dw}},
 \end{equation}
 where ${\bf dz}=dz_1\cdots dz_k$, ${\mathbf{dw}}=dw_1\cdots dw_{\ell}$, ${\mathbf z}-{\bf a}=\prod_{j=1}^k(z_j-a_j)$ and ${\mathbf w}-{\mathbf b}=\prod_{j=1}^{\ell}(w_j-b_j)$.

 Observe that for $1\leq j_2<j_1\leq k$ we have
  \[
  |z_{j_1}-z_{j_2}|\geq |z_{j_1}-a_{j_1}|-|z_{j_2}-a_{j_2}|-|a_{j_1}|-|a_{j_2}|\geq \frac{2C_0}{\log T},
 \]
 and 
 \[
 |w_{j_1}-w_{j_2}|\geq |w_{j_1}-b_{j_1}|-|w_{j_2}-b_{j_2}|-|b_{j_1}|-|b_{j_2}|\geq \frac{2C_0}{\log T}.
 \] 
 Hence $z_j$ and $w_j$ satisfy the conditions  \eqref{sizerestriction} and \eqref{sizerestriction-}. In particular, \eqref{eqn:error-bd} holds for $(z_1,\dots, z_k)\in\prod_{j=1}^k\partial{D}_j$ and $(w_1,\dots,w_{\ell})\in\prod_{j=1}^{\ell}\partial{\tilde{D}}_j$. More precisely, we have
\begin{equation*}
L({\mathbf z},{\mathbf w})-R({\mathbf z},{\mathbf w})=O \bigg( K^{\vartheta_{k,\ell}+\e} \Big( \frac{T}{T_0} \Big)^{1+C_{k,\ell}} \bigg).
\end{equation*}
By using this bound in \eqref{eqn:cauchy}, we obtain 
\begin{align*}
L({\mathbf a},{\bf b})-R({\bf a},{\mathbf b})
&\ll K^{\vartheta_{k,\ell}+\e} \Big( \frac{T}{T_0} \Big)^{1+C_{k,\ell}}
 \prod_{j=1}^{k}\frac{\text{length}\partial{D_j}}{r_j}\prod_{j=1}^{\ell}\frac{\text{length}(\partial{\tilde{D}_j})}{r_j} \\
&\ll K^{\vartheta_{k,\ell}+\e} \Big( \frac{T}{T_0} \Big)^{1+C_{k,\ell}},
\end{align*}
 as desired.

 \end{proof}


\section{Proof of Theorem \ref{thm22} }\label{sec:proof of mainthm}

As a first step in proving Theorem \ref{thm22}, we shall apply Theorem \ref{HN-main} with $\I=\{a,0\}$ and $\J=\{b,0\}$. In the case $|\I|=|\J| =2$, we know that $\mathcal{AD}_{k,\ell}(\vartheta_{k,\ell},C_{k,\ell},\beta_{k,\ell})$ holds with $\vartheta_{2,2} = \frac{3}{4}$, $C_{2,2} = \frac{5}{4}$, and $\beta_{2,2}=1$ \cite[p. 218]{HY}. Hence, Theorem \ref{HN-main} holds unconditionally for any $\eta<\frac13$.

In order to compute $ \mathscr{D}_{2,2;\omega}(K) $, we will simplify the expressions for $\mathcal{M}_{0,\I, \J; \omega}(K)$ and  $\mathcal{M}_{1,\I, \J; \omega}(K)$ that were provided by Theorem \ref{HN-main}. We will move the contours of integration to the left, and then the residues that are obtained will be part of the main term in our formula for $ \mathscr{D}_{2,2;\omega}(K) $. Once we obtain the whole main term in terms of $a$ and $b$, we will first let $b$ tend to $a$, and then let $a$ tend to $0$. The resulting limit will provide us with the result of Theorem \ref{thm22}.

Note that we will frequently refer to the special functions that were defined in \eqref{eq:fhG} and \eqref{eq:notation}.


\subsection{Computing $\mathcal{M}_{0,\I, \J; \omega}(K)$} 

\begin{prop}\label{prop:M0}
Let $\I=\{a,0\}$ and $\J=\{b,0\}$, and let $\mathcal{M}_{0,\I, \J; \omega}(K)$ be defined by \eqref{M0swaps}. Then we have
    \[ 
     \mathcal{M}_{0,\I, \J; \omega}(K)
    = \widehat{\omega}(0)\big(\mathcal{R}_1(a, b) + \mathcal{R}_1'(a, b)\big)+O\Big(TK^{-\tfrac{1}{2}+2 \delta}\Big),
    \]
where 
    \begin{align*}
    \mathcal{R}_1(a, b)
    &=\left(Y+c_1+\gamma_0\right)\frac{f(a+b)}{a+b}\frac{f(a)}{a}\frac{f(b)}{b}h(a+b)+2\frac{f(a+b)}{a+b}\frac{f(a)}{a}\frac{f(b)}{b}H(a+b)\\&+\left(\frac{f'(a+b)}{a+b}-\frac{f(a+b)}{(a+b)^2}\right)\frac{f(a)}{a}\frac{f(b)}{b}h(a+b)\\&+\frac{f(a+b)}{a+b}\left(\frac{f'(a)}{a}-\frac{f(a)}{a^2}\right)\frac{f(b)}{b}h(a+b)\\&+\frac{f(a+b)}{a+b}\frac{f(a)}{a}\left(\frac{f'(b)}{b}-\frac{f(b)}{b^2}\right)h(a+b),
    \end{align*}
and 
    \begin{align*}
    \mathcal{R}_1'(a, b)
    &=a^{-2}G(-a)K^{-a}\frac{f(b)}{b}\frac{f(b-a)}{b-a}f(-a)h(b-a)\\&+b^{-2}G(-b)K^{-b}\frac{f(a)}{a}\frac{f(a-b)}{a-b}f(-b)h(a-b)\\&+ (a+b)^{-2}G(-a-b)K^{-a-b}\frac{f(-b)}{b}\frac{f(-a)}{a}f(-a-b)h(-a-b).
    \end{align*}
\end{prop}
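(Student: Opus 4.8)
The plan is to start from the integral representation \eqref{M0swaps},
\[
\mathcal{M}_{0,\I, \J; \omega}(K) = \frac{\widehat{\omega}(0)}{2\pi i}\int_{(c)}K^{s}\Phi_{2}(s)\,\mathcal{B}(\I+s,\J)\,ds,
\]
and use the factorization \eqref{BIJid}, $\mathcal{B}(\I+s,\J) = \mathcal{A}(\I+s,\J)\mathcal{Z}(\I+s,\J)$. With $\I=\{a,0\}$ and $\J=\{b,0\}$ we have
\[
\mathcal{Z}(\I+s,\J) = \zeta(1+a+b+s)\,\zeta(1+a+s)\,\zeta(1+b+s)\,\zeta(1+s),
\]
so the integrand has a pole at $s=0$ coming from $\Phi_2(s)$ together with a pole at $s=0$ from $\zeta(1+s)$ (giving a double pole there, once $a,b$ are kept generic), plus simple poles at $s=-a$, $s=-b$, $s=-a-b$ from the remaining zeta factors, while $\mathcal{A}(\I+s,\J)$ is holomorphic in a neighbourhood of $\Re(s)\ge -\tfrac12$ by the Euler product estimates (this is where one invokes the kind of bound used in Lemma \ref{thm:holomorphy}). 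The first step is therefore to shift the contour from $\Re(s)=c$ to $\Re(s)=-\tfrac12+\delta$ and pick up the residues at $s=0,-a,-b,-a-b$.

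Next I would identify the residues with the claimed expressions. Writing $\zeta(1+w) = f(w)/w$ with $f(w)=w\zeta(1+w)$ entire, and using $\Phi_2(s)=G(s)/s$ with $G$ entire and $G(0)=c_0=1$, one rewrites the integrand in a form where every apparent singularity is displayed explicitly through the factors $1/s$, $1/(a+b+s)$, $1/(a+s)$, $1/(b+s)$. The residue at $s=-a$ then contributes $a^{-2}G(-a)K^{-a}$ times $f(-a)$ and the evaluations of the other three zeta factors at $s=-a$, namely $\zeta(1+b-a)=f(b-a)/(b-a)$, $\zeta(1+b)=f(b)/b$, $\zeta(1-a)=f(-a)/(-a)$ — wait, more carefully: at $s=-a$ the factor $\zeta(1+a+s)=\zeta(1)$ is the one producing the pole, contributing residue $1$, and the surviving factors are $\zeta(1+b-a)$, $\zeta(1+b)$, $\zeta(1-a)$, times $\Phi_2(-a)=G(-a)/(-a)$, times $K^{-a}$, and also the factor $\mathcal{A}$ evaluated at $s=-a$, which equals $h(b-a)$ up to the bookkeeping in the definitions; collecting these reproduces the first line of $\mathcal{R}_1'(a,b)$, and symmetrically the residues at $s=-b$ and $s=-a-b$ give the second and third lines. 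The residue at $s=0$ is a double-pole residue: differentiating the product $K^s\,G(s)\,\zeta(1+a+b+s)\,\zeta(1+a+s)\,\zeta(1+b+s)\cdot s\zeta(1+s)\cdot\mathcal{A}(\I+s,\J)/s$ and using $K^s = e^{sY}$, $\tfrac{d}{ds}K^s|_{s=0}=Y$, $G'(0)=c_1$, and $\tfrac{d}{ds}\big(s\zeta(1+s)\big)\big|_{s=0} = \gamma_0$ (the constant term of the Laurent expansion of $\zeta$), together with the product rule spreading the derivative across the three remaining zeta factors and across $h=1/\zeta(2+\,\cdot\,)$, produces exactly the five lines of $\mathcal{R}_1(a,b)$ — the first line from hitting $K^s$, $G$, and $s\zeta(1+s)$; the second from hitting $h$ via $h'= H$; and lines three through five from hitting each of $\zeta(1+a+b+s)$, $\zeta(1+a+s)$, $\zeta(1+b+s)$ respectively, each giving a term of the form $(f'(\cdot)/\cdot - f(\cdot)/(\cdot)^2)$.

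Finally I would bound the remaining integral on the line $\Re(s)=-\tfrac12+\delta$. Here one uses the decay \eqref{Phibd} of $\Phi_2(s)$ — choosing $m$ large enough to make the integral converge — together with standard convexity/polynomial bounds for $\zeta$ on the line $\Re = \tfrac12+\delta+O(1/\log T)$ and the absolute convergence of $\mathcal{A}(\I+s,\J)$ there; the factor $K^s$ contributes $K^{-1/2+\delta}$, and multiplying by the $\widehat{\omega}(0) \asymp T$-sized prefactor gives the error term $O\big(T K^{-1/2+2\delta}\big)$ claimed. The main obstacle, and the step requiring the most care, is the double-residue computation at $s=0$: one must track the derivative across six factors ($K^s$, $G$, $s\zeta(1+s)$, and the three zeta factors, plus $h$), correctly separate which pieces are "already divided out" versus genuinely differentiated, and verify that the resulting ten-or-so terms regroup into precisely the five displayed lines of $\mathcal{R}_1$ — a calculation that is routine in principle but where sign errors and misattributed factors are easy to make. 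A secondary subtlety is justifying that $\mathcal{A}(\I+s,\J)$ really is holomorphic and bounded across the strip $-\tfrac12+\delta \le \Re(s) \le c$ uniformly in the small shifts $a,b$, which is exactly the type of Euler-product argument carried out in the proof of Lemma \ref{thm:holomorphy} and which I would cite rather than repeat.
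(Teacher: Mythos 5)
Your proposal is correct and follows essentially the same route as the paper: the paper likewise shifts the contour to $\Re(s)=-\tfrac12+2\delta$, picks up the double-pole residue at $s=0$ (yielding $\mathcal{R}_1(a,b)$, with the first line coming from hitting $K^s$, $G$ and $f(s)=s\zeta(1+s)$) and the simple residues at $s=-a,-b,-a-b$ (yielding $\mathcal{R}_1'(a,b)$), and bounds the remaining integral by $K^{-1/2+2\delta}$ via \eqref{Phibd}, the prefactor $\widehat{\omega}(0)\ll T$ giving the stated error. The one step you defer to ``bookkeeping'' the paper handles up front by quoting the closed form $\mathcal{B}(\I+s,\J)=\zeta(1+a+b+s)\zeta(1+a+s)\zeta(1+b+s)\zeta(1+s)/\zeta(2+a+b+2s)$ from (1.28) of \cite{HN}; note in particular that the denominator's argument is $2+a+b+2s$, whose chain-rule factor $2$ is exactly what produces the coefficient in the term $2\,\tfrac{f(a+b)}{a+b}\tfrac{f(a)}{a}\tfrac{f(b)}{b}H(a+b)$ and makes the values $h(b-a)$, $h(a-b)$, $h(-a-b)$ appear at the three simple poles, so you should state this identity explicitly rather than keep $\mathcal{A}(\I+s,\J)$ abstract.
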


 \begin{proof}
 By (1.31) and then by (1.28) in \cite{HN}, we can write 
     \begin{align*}
      \mathcal{M}_{0,\I, \J; \omega}(K)
      &= \frac{\widehat{\omega}(0)}{2\pi i}\int_{(2c)}K^{s}\Phi_2(s)\mathcal{B}\left(\I_{s},\J\right)\;ds\;ds\\
    &= \frac{\widehat{\omega}(0)}{2\pi i}\int_{(2c)}K^{s}\Phi_2(s)\frac{\zeta(1+a+b+s)\zeta(1+a+s)\zeta(1+b+s)\zeta(1+s)}{\zeta(2+2s+a+b)}\;ds
    \end{align*}
We move the line of integration to $\Re(s)=-\frac{1}{2}+2\delta$ capturing the residue of the integrand at $s=0$ in addition to the residues at $s=-a,-b,-a-b$. This gives 
    \begin{align*}
     \mathcal{M}_{0,\I, \J; \omega}(K)
     &=\widehat{\omega}(0)\mathrm{Res}_{s=0}\left(s^{-2}G(s)K^{s}\frac{\zeta(1+a+b+s)\zeta(1+a+s)\zeta(1+b+s)f(s)}{\zeta(2+2s+a+b)}\right)
     \\
     &+ \widehat{\omega}(0)\mathrm{Res}_{s=-a}\left((s^{-2}G(s)K^{s}\frac{\zeta(1+a+b+s)\zeta(1+a+s)\zeta(1+b+s)f(s)}{\zeta(2+2s+a+b)}\right)
     \\
     &+ \widehat{\omega}(0)\mathrm{Res}_{s=-b}\left(s^{-2}G(s)K^{s}\frac{\zeta(1+a+b+s)\zeta(1+a+s)\zeta(1+b+s)f(s)}{\zeta(2+2s+a+b)}\right)
     \\
     &+\widehat{\omega}(0)\mathrm{Res}_{s=-a-b}\left(s^{-2}G(s)K^{s}\frac{\zeta(1+a+b+s)\zeta(1+a+s)\zeta(1+b+s)f(s)}{\zeta(2+2s+a+b)}\right)
     \\
     &+\frac{\widehat{\omega}(0)}{2\pi i}\int_{(-\frac{1}{2}+2\delta)} \Phi_2(s)K^{s}\frac{\zeta(1+a+b+s)\zeta(1+a+s)\zeta(1+b+s)\zeta(1+s)}{\zeta(2+2s+a+b)}\;ds.
    \end{align*}
It follows from (\ref{Phibd}) that 
    \[
   \int_{(-\frac{1}{2}+2\delta)} \Phi_2(s)K^{s}\frac{\zeta(1+a+b+s)\zeta(1+a+s)\zeta(1+b+s)\zeta(1+s)}{\zeta(2+2s+a+b)}\;ds\ll K^{-\frac{1}{2}+2 \delta}.
    \] 
Let us now compute the residue of $s^{-2}G(s)K^{s}\frac{\zeta(1+a+b+s)\zeta(1+a+s)\zeta(1+b+s)f(s)}{\zeta(2+2s+a+b)}$ at $s=0$. This is
    \begin{align*}
    &Y\frac{f(a+b)}{a+b}\frac{f(a)}{a}\frac{f(b)}{b}h(a+b)+
    c_1\frac{f(a+b)}{a+b}\frac{f(a)}{a}\frac{f(b)}{b}h(a+b)\\
    &+\left(\frac{f'(a+b)}{a+b}-\frac{f(a+b)}{(a+b)^2}\right)\frac{f(a)}{a}\frac{f(b)}{b}h(a+b)+\frac{f(a+b)}{a+b}\left(\frac{f'(a)}{a}-\frac{f(a)}{a^2}\right)\frac{f(b)}{b}h(a+b)\\
    &+\frac{f(a+b)}{a+b}\frac{f(a)}{a}\left(\frac{f'(b)}{b}-\frac{f(b)}{b^2}\right)h(a+b)+\gamma_0\frac{f(a+b)}{a+b}\frac{f(a)}{a}\frac{f(b)}{b}h(a+b)\\
    &+2\frac{f(a+b)}{a+b}\frac{f(a)}{a}\frac{f(b)}{b}H(a+b).
    \end{align*}
Further, this is equal to
    \begin{align*}
    &\left(Y+c_1+\gamma_0\right)\frac{f(a+b)}{a+b}\frac{f(a)}{a}\frac{f(b)}{b}h(a+b)+2\frac{f(a+b)}{a+b}\frac{f(a)}{a}\frac{f(b)}{b}H(a+b)\\&+\left(\frac{f'(a+b)}{a+b}-\frac{f(a+b)}{(a+b)^2}\right)\frac{f(a)}{a}\frac{f(b)}{b}h(a+b)\\&+\frac{f(a+b)}{a+b}\left(\frac{f'(a)}{a}-\frac{f(a)}{a^2}\right)\frac{f(b)}{b}h(a+b)\\&+\frac{f(a+b)}{a+b}\frac{f(a)}{a}\left(\frac{f'(b)}{b}-\frac{f(b)}{b^2}\right)h(a+b),
    \end{align*}
which is $\mathcal{R}_1(a, b)$. The desired result is obtained by simply observing that
    \begin{align*}
    \mathcal{R}'_1(a, b)
    &=\mathrm{Res}_{s=-a}\left(s^{-2}G(s)K^{s}\frac{\zeta(1+a+b+s)\zeta(1+a+s)\zeta(1+b+s)f(s)}{\zeta(2+2s+a+b)}\right)\\
    &+ \mathrm{Res}_{s=-b}\left(s^{-2}G(s)K^{s}\frac{\zeta(1+a+b+s)\zeta(1+a+s)\zeta(1+b+s)f(s)}{\zeta(2+2s+a+b)}\right)\\
    &+\mathrm{Res}_{s=-a-b}\left(s^{-2}G(s)K^{s}\frac{\zeta(1+a+b+s)\zeta(1+a+s)\zeta(1+b+s)f(s)}{\zeta(2+2s+a+b)}\right).
    \end{align*}
\end{proof}
We will now rewrite $\mathcal{R}_1(a, b)$ whereby we simplify its expression. For this, we introduce some notation.
    \begin{align*}
    \mathcal{L}_0  &:= Y+c_1+g_1\\
      \kappa_{11}(a,b) & := f(a)f(b) f(a+b) h(a+b),    \\
      \tilde{\kappa}_{11}(a,b)  &: =  f(a)f(b) f(a+b)   H(a+b), \\
      \kappa_{12}(a,b) & := f(a)f(b) h(a+b)  \big((a+b)f'(a+b)-f(a+b)\big) 
        = f(a)f(b) F(a+b)h(a+b), \\
        \kappa_{13}(a,b) & := f(b)f(a+b) h(a+b)  \big(af'(a)-f(a)\big) 
         =  F(a) f(b)f(a+b) h(a+b),  \\
          \kappa_{14}(a,b) &:=  f(a)  f(a+b)  h(a+b)  \big(bf'(b)-f(b)\big)
           =  f(a) F(b) f(a+b)  h(a+b).
      \end{align*} 
Observe that we can now write 
    \[
      \mathcal{R}_1 (a, b)= \big(\mathcal{R}_{11} +  \mathcal{R}_{12} +\mathcal{R}_{13} +\mathcal{R}_{14}\big)(a, b),
    \]
where we set
    \begin{equation}\label{eqn:R1}
    \begin{split}
       \mathcal{R}_{11} (a, b)& = \frac{1}{ab(a+b)} \mathcal{L}_0 \kappa_{11}(a,b) 
       +2  \frac{1}{ab(a+b)}  \tilde{\kappa}_{11}(a,b) , \\
        \mathcal{R}_{12}(a, b) & =  \frac{1}{ab(a+b)^2} \kappa_{12}(a,b), \\
        \mathcal{R}_{13} (a, b)& =  \frac{1}{a^2b(a+b)} \kappa_{13}(a,b),\\
        \mathcal{R}_{14} (a, b)& =   \frac{1}{ab^2(a+b)} \kappa_{14}(a,b).
    \end{split}
    \end{equation}


\subsection{Computing $\mathcal{M}_{1,\I, \J; \omega}(K)$}
 First, we observe that by \eqref{M1swaps} we have 
  \begin{align*}
  \mathcal{M}_{1,\I, \J; \omega}(K)=\sum_{i_1\in\K,i_2\in\Ll}\frac{c_{i_{1},i_{2}}}{2\pi i}\int_{-\infty}^{\infty}\omega(t)\int_{\Re(s)=2\epsilon}I_{i_1i_2}(s,t)\;ds\;dt
  \end{align*} 
  for sufficiently small $\epsilon>0$, where 
    \begin{align*}      
      c_{i_{1},i_{2}} 
      &= 
   \Z\big(\I \backslash \{ a_{i_1} \}, \{ -a_{i_1} \}\big) \Z\big( \{ - b_{i_2} \}, \J \backslash \{ -b_{i_2} \}\big) \\
   &= \prod_{j_1 \in \K \setminus \{ i_1 \}}  \zeta(1-a_{i_1}+a_{j_1})   
      \prod_{j_2 \in  \Ll \setminus \{ i_2 \} }
    \zeta(1-b_{i_2}+b_{j_2}),
   \end{align*}
 and 
    \begin{align*}
    I_{i_1i_2}(s,t)
    &=\Phi_2(s) K^{s} \left( \frac{2\pi}{t}\right)^{a_{i_1}+b_{i_2}+s}\zeta(1-a_{i_1}-b_{i_2}-s)
    \prod_{\substack{j_1\in \K\backslash\{{i_1}\}\\j_2\in \Ll\backslash\{{i_2}\}}}\zeta(1+a_{j_1}+b_{j_2}+s)\\
    &\hspace{2em} \times\mathcal{A}\big( (\I \setminus \{ a_{i_1} \}) \cup \{ -b_{i_2}-s \},  ((\J \setminus \{ b_{i_2} \})+s)  \cup \{ - a_{i_1} \} \big).
    \end{align*}
Since we chose  $\I=\{a,0\}$ and $\J=\{b,0\}$, the terms $c_{i_1,i_2}$ and $I_{i_1,i_2}(s,t)$ that appear in $\mathcal{M}_{1,\I,\J;\omega}(K)$ can be written more explicitly. We find that 
    \begin{align*}
     I_{11}(s,t) 
     & = \Phi_2(s) K^{s} \Big( \frac{2 \pi}{t} \Big)^{a+b+s} \zeta(1-a-b-s) \zeta(1+s)
     \mathcal{A}\big( \{ 0, -b-s\}, \{  s, -a \}\big) \\
     & = \frac{G(s)}{s} K^{s} \Big( \frac{2 \pi}{t} \Big)^{a+b+s} \zeta(1-a-b-s) \zeta(1+s) \frac{1}{\zeta(2-a-b)} ,
     \end{align*}
    \begin{align*}
     I_{12}(s,t) 
     & = \Phi_2(s) K^{s} \Big( \frac{2 \pi}{t} \Big)^{a+s} \zeta(1-a-s) \zeta(1+b+s)
     \mathcal{A}\big( \{ 0, -s\}, \{  b+ s, -a \}\big) \\
     & =  \frac{G(s)}{s} K^{s} \Big( \frac{2 \pi}{t} \Big)^{a+s} \zeta(1-a-s) \zeta(1+b+s)
     \frac{1}{\zeta(2+ b-a)},
    \end{align*}
    \begin{align*}
     I_{21}(s,t)
     &= \Phi_2(s) K^{s} \Big( \frac{2 \pi}{t} \Big)^{b+s} \zeta(1-b-s) \zeta(1+a+s)
     \mathcal{A}\big( \{ a, -b-s\}, \{  s, 0 \}\big) \\
     & =   \frac{G(s)}{s} K^{s} \Big( \frac{2 \pi}{t} \Big)^{b+s} \zeta(1-b-s) \zeta(1+a+s)
     \frac{1}{\zeta(2+a-b)},
    \end{align*}
and 
    \begin{align*}
     I_{22}(s,t) 
     & =\Phi_2(s) K^{s} \Big( \frac{2 \pi}{t} \Big)^{s} \zeta(1-s) \zeta(1+a+b+s)
     \mathcal{A}\big( \{ a, -s\}, \{  b+s, 0 \}\big) \\
     & =  \frac{G(s)}{s} K^{s} \Big( \frac{2 \pi}{t} \Big)^{s} \zeta(1-s) \zeta(1+a+b+s) \frac{1}{\zeta(2+a+b)}.
     \end{align*}
One can compute $c_{i_1,i_2}$ in a straightforward manner. We collect the results in the following table.
    \begin{table}[ht]
    \caption{}
    \begin{center}
    \setlength{\tabcolsep}{16pt}
        \def\arraystretch{2}
    \begin{tabular}{ |c|c|c| } 
     \hline
     $(i_1,i_2)$ & $c_{i_1,i_2}$ & $I_{i_1i_2}(s,t)$ \\
     \hline
     $(1,1)$ & $\zeta(1-a) \zeta(1-b)$ & $ \frac{1}{s^2} G(s) K^{s} \left( \tfrac{2 \pi}{t} \right)^{a+b+s} \zeta(1-a-b-s) f(s) \frac{1}{\zeta(2-a-b)}$ \\ \hline
     $(1,2)$ & $\zeta(1-a) \zeta(1+b)$ & $ \frac{1}{s}G(s) K^{s} \left( \tfrac{2 \pi}{t} \right)^{a+s} \zeta(1-a-s) \zeta(1+b+s)
     \frac{1}{\zeta(2+ b-a)}$ \\ \hline
     $(2,1)$ &  $ \zeta(1+a) \zeta(1-b)$        & $\frac{1}{s}G(s) K^{s} \left( \tfrac{2 \pi}{t} \right)^{b+s} \zeta(1-b-s) \zeta(1+a+s)
     \frac{1}{\zeta(2+a-b)}$\\\hline
     $(2,2)$ &   $\zeta(1+a) \zeta(1+b)$        & $- \frac{1}{s^2}G(s) K^s \left( \tfrac{2 \pi}{t} \right)^{s} \zeta(1+a+b+s)f(-s) \frac{1}{\zeta(2+a+b)} $ \\\hline
    \end{tabular}\label{table c I}
    \end{center}
    \end{table}   
    
Hence we can write 
    \begin{equation}\label{eqn:offdiag-CK-simplified}
    \begin{split}
     &\mathcal{M}_{1,\I, \J; \omega}(K) \\
    =&\frac{1}{2\pi i} \int_{-\infty}^{\infty}\omega(t)\int_{\Re(s)=2\epsilon}\Big( \zeta(1-a) \zeta(1-b)I_{11}(s,t)+ \zeta(1-a) \zeta(1+b)I_{12}(s,t)\\
    &\hspace{9em}+\zeta(1+a) \zeta(1-b)I_{21}(s,t)+\zeta(1+a) \zeta(1+b)I_{22}(s,t)\Big) \;ds\;dt.
    \end{split}
    \end{equation}


    \begin{prop}\label{prop:M1}
    Let $K=T^{1+\eta}$ with $0 < \eta<\frac13$, and suppose that a weight function $\omega$ satisfies \eqref{cond1}, \eqref{cond2}, and \eqref{cond3} with $\nu>\frac{5+3(\eta+1)}{9}$. Let $\I=\{a,0\}$ and $\J=\{b,0\}$ satisfy \eqref{sizerestriction} and \eqref{sizerestriction-}. In particular, assume that $|a|,|b|\leq\delta$ with $\delta<\frac{\eta}{2(2+3\eta)}$. Then we have       
    \begin{equation*}
       \mathcal{M}_{1,\I, \J; \omega}(K) = \int_{-\infty}^{\infty} \omega(t) \cdot  \big( -\mathcal{R}_1'(a, b)+\mathcal{R}_{2}(a, b)\big) \, dt +O\left(K^{-\frac12+3\delta}T^{\frac32-\delta}\right),
    \end{equation*}
    where $\mathcal{R}_1'(a, b)$ is as given in Proposition \ref{prop:M0} and 
        \begin{equation}\label{eq:R2}
        \begin{split}
           \mathcal{R}_{2}(a, b)
           =&  -  \left( \frac{2 \pi}{t}\right )^{a+b}h(-a-b)\frac{f(-a)}{a}\frac{f(-b)}{b}
         \bigg(  \frac{F(-a-b)}{(a+b)^2}+  \frac{f(-a-b)}{a+b} (X + g_1 + c_1)
          \bigg) \\
          & +  \left( \frac{2 \pi}{t}\right )^{a} h(b-a) \frac{f^2(-a)}{a^2} \frac{f^2(b)}{b^2} +    \left( \frac{2 \pi}{t} \right)^{b} h(a-b) \frac{f^2(-b)}{b^2}\frac{f^2(a)}{a^2}\\
           & -   h(a+b)\frac{f(a)}{a}\frac{f(b)}{b}
         \left( \frac{F(a+b)}{(a+b)^2}+  \frac{f(a+b)}{a+b} (X - g_1 + c_1)   \right)
        \\
        &  +  K^{-b} \left( \frac{2 \pi}{t} \right)^{a-b} h(b-a)\frac{G(-b)}{b} \frac{f(b-a)}{b-a}\frac{f(-a)}{a}\frac{f(b)}{b}   \\
        & + K^{-a} \left( \frac{2 \pi}{t} \right)^{b-a}h(a-b)\frac{G(-a)}{a}\frac{f(a-b)}{a-b}\frac{f(a)}{a}\frac{f(-b)}{b} \\
          &- K^{-a-b}\Big( \frac{2 \pi}{t} \Big)^{-a-b}  h(a+b)\frac{G(-a-b)}{a+b}  \frac{f(a+b)}{a+b}\frac{f(a)}{a}\frac{f(b)}{b}.
        \end{split}
        \end{equation}
    \end{prop}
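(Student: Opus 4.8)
The plan is to start from the explicit identity \eqref{eqn:offdiag-CK-simplified}, which expresses $\mathcal{M}_{1,\I,\J;\omega}(K)$ as the integral against $\omega(t)$ of a sum of four contour integrals $\int_{\Re(s)=2\epsilon} c_{i_1,i_2}I_{i_1i_2}(s,t)\,ds$, with the $I_{i_1i_2}$ read off from Table \ref{table c I}. For each of the four summands I would move the $s$-contour from $\Re(s)=2\epsilon$ to $\Re(s)=-\tfrac12+3\delta$, collecting the residues crossed. Using $f(s)=s\zeta(1+s)$, $f(-s)=-s\zeta(1-s)$ and $G(0)=c_0=1$, one checks that $c_{1,1}I_{11}$ and $c_{2,2}I_{22}$ have a pole of order \emph{exactly} two at $s=0$ (the factor $s$ inside $f$ cancels one of the two powers of $s$ from $1/s^2$), while $c_{1,2}I_{12}$ and $c_{2,1}I_{21}$ have simple poles there; in addition $I_{11}$ and $I_{22}$ have simple poles at $s=-a-b$ (from $\zeta(1-a-b-s)$, resp.\ $\zeta(1+a+b+s)$), and $I_{12}$, $I_{21}$ have simple poles at $s=-a$ and $s=-b$ (from $\zeta(1-a-s)\zeta(1+b+s)$, resp.\ $\zeta(1-b-s)\zeta(1+a+s)$). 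Because $|a|,|b|\le\delta$ with $\delta$ small, every one of these poles lies strictly between the old and the new contour, so the shift picks up precisely these residues and nothing else.

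The integrals along the shifted line will form the error term. On $\Re(s)=-\tfrac12+3\delta$ one has $|K^s|=K^{-1/2+3\delta}$, while each of $(2\pi/t)^{a+b+s}$, $(2\pi/t)^{a+s}$, $(2\pi/t)^{b+s}$, $(2\pi/t)^{s}$ is $\ll t^{1/2-\delta}\ll T^{1/2-\delta}$ in modulus once $|a|,|b|\le\delta$. The zeta-factors of the shape $\zeta(1\pm\cdots-s)$ lie in the half-plane of absolute convergence and are bounded; the remaining ones are evaluated just to the right of $\Re=\tfrac12$ and are controlled by the convexity bound; $1/\zeta(2\pm a\pm b)$ is bounded above and below; and $\Phi_2(s)$ decays faster than any fixed power of $\Im(s)$ by \eqref{Phibd}, so the $s$-integral converges with a bound $O(1)$ depending only on $\mu$ and $\delta$. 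Integrating in $t$ against $\omega$, which is $\ll 1$ on a set of measure $\asymp T$, contributes one further factor of $T$. Hence the four shifted integrals contribute $O(K^{-1/2+3\delta}T^{3/2-\delta})$, the asserted error.

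It then remains to evaluate the residues and match them with $-\mathcal{R}_1'(a,b)+\mathcal{R}_2(a,b)$. The residues at $s=0$, computed by a single differentiation in the two double-pole cases after expanding $f$, $h$, $G$ and the exponentials $E_1(s)=e^{sL}$, $E_2(s)=e^{sY}$ from \eqref{eq:fhG} and \eqref{eq:notation} (so that $c_1$, $g_1=\gamma_0$ and $X=Y-L$ appear, since $K^s$ times the relevant $(2\pi/t)$-power equals $e^{sX}$ up to a fixed power of $t$), should produce exactly the three lines of \eqref{eq:R2} that carry no power of $K$. The residues at $s=-a,-b,-a-b$ produce the terms with a factor $K^{-a}$, $K^{-b}$ or $K^{-a-b}$; here one uses $\mathrm{Res}_{s=-a}\zeta(1-a-s)=-1$, $\mathrm{Res}_{s=-a}\zeta(1+a+s)=1$ and their analogues together with $f(s)=s\zeta(1+s)$. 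I expect the residues of $c_{1,2}I_{12}$ at $s=-a$, of $c_{2,1}I_{21}$ at $s=-b$, and of $c_{1,1}I_{11}$ at $s=-a-b$ to reproduce exactly $-\mathcal{R}_1'(a,b)$ --- these are the three residues already evaluated in the proof of Proposition \ref{prop:M0}, now with the opposite sign --- while the residues of $c_{2,1}I_{21}$ at $s=-a$, of $c_{1,2}I_{12}$ at $s=-b$, and of $c_{2,2}I_{22}$ at $s=-a-b$ give the last three lines of \eqref{eq:R2}. Collecting the residues and the error bound yields the claimed formula; note that since $\mathcal{R}_1'(a,b)$ is independent of $t$, the corresponding contribution integrates to $\widehat{\omega}(0)\mathcal{R}_1'(a,b)$, which is precisely the term appearing in Proposition \ref{prop:M0} and will cancel when $\mathcal{M}_0$ and $\mathcal{M}_1$ are added.

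The main obstacle is this last step: organizing the roughly dozen residue contributions (each double pole at $s=0$ being handled by one differentiation that itself splits into several pieces), simplifying each via the Taylor data for $f,h,G,E_1,E_2$ and the Ramanujan-sum and Euler-product identities already recorded for the $\mathcal{A}$-factors, and then matching them term by term against $-\mathcal{R}_1'$ and $\mathcal{R}_2$. A subtler point is keeping the pole at $s=0$ of order two rather than three: it is the cancellation built into $f(s)=s\zeta(1+s)$ (equivalently, the appearance of $G(s)/s^2$ in place of a bare $1/s^3$) that guarantees this, so that only a single derivative is needed and no unexpected higher-degree term in $\log K$ is generated at this stage.
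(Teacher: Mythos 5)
Your proposal is correct and follows essentially the same route as the paper: starting from \eqref{eqn:offdiag-CK-simplified} and Table \ref{table c I}, shifting each $s$-contour to $\Re(s)=-\tfrac12+3\delta$, bounding the shifted integrals via the convexity bound for $\zeta$, the decay of $\Phi_2$ and the support of $\omega$ to obtain $O\big(K^{-\frac12+3\delta}T^{\frac32-\delta}\big)$, and collecting the residues at $s=0,-a,-b,-a-b$. Your predicted assignment of residues — the $s=0$ residues giving the $K$-free terms of $\mathcal{R}_2$, the residues of $c_{1,2}I_{12}$ at $s=-a$, $c_{2,1}I_{21}$ at $s=-b$, $c_{1,1}I_{11}$ at $s=-a-b$ giving $-\mathcal{R}_1'$, and the remaining three giving the $K^{-a},K^{-b},K^{-a-b}$ terms of $\mathcal{R}_2$ — is exactly the bookkeeping carried out in the paper's proof.
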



\begin{proof}
Observe that by Table \ref{table c I}, each of $I_{11}(s,t)$ and $I_{22}(s,t)$ in \eqref{eqn:offdiag-CK-simplified} have 
    \begin{itemize}
    \item a double pole at $s=0$,  
    \item a simple pole at $s= -(a+b)$, 
    \end{itemize}
whereas  $I_{12}(s,t)$ and $I_{21}(s,t)$ in \eqref{eqn:offdiag-CK-simplified}  each have
    \begin{itemize}
    \item a simple pole at $s=0$,  
    \item a simple pole at $s= -a$, 
    \item a simple pole at $s= -b$.
    \end{itemize}
We denote by $R_{i_1i_2}(a, b)$ the  sum of the residues of $I_{i_1i_2}(s,t)$ at these poles.  Moving the contour of integration in \eqref{eqn:offdiag-CK-simplified} to the line $\Re(s)=-\frac12+3\delta $ gives 
    \begin{equation}\label{eqn:offdiag-residues+int}
    \begin{split}
    \mathcal{M}_{1,\I, \J; \omega}(K)
    =\, & \int_{-\infty}^{\infty}\omega(t)\Big(\zeta(1-a) \zeta(1-b)R_{11}(a, b)+ \zeta(1-a) \zeta(1+b)R_{12}(a, b) \\
    &\hspace{5em}+\zeta(1+a) \zeta(1-b)R_{21}(a, b)+\zeta(1+a) \zeta(1+b)R_{22} (a, b)\Big)\;dt  \\
    &+\sum_{i_1\in\K,i_2\in\Ll}\frac{c_{i_{1},i_{2}}}{2\pi i}\int_{-\infty}^{\infty}\omega(t)\int_{(-\frac12+3\delta)}I_{i_1i_2}(s,t)\;ds\;dt.
    \end{split}
    \end{equation} 
We first estimate the second term on the right-hand side, which is equal to
    \begin{align*}
    &\sum_{i_{1}\in\K,i_2\in\Ll}
    c_{i_{1},i_{2}}\frac{1}{2\pi i}\int_{(-\frac12+3\delta)}\mathcal{A}( (\I \setminus \{ a_{i_1} \}) \cup \{ -b_{i_2}-s \},   
         ((\J \setminus \{ b_{i_2} \})+s)
          \cup \{ - a_{i_1} \} )K^{s}\Phi_2(s) \\
     &\hspace{1em}\times \zeta(1-a_{i_1}-b_{i_2}-s)\prod_{\substack{{j_1}\in \K\backslash\{{i_1}\}\\{j_2}\in \Ll\backslash\{{i_2}\}}}\zeta(1+a_{j_1}+b_{j_2}+s)\int_{-\infty}^{\infty}\left(\frac{t}{2\pi}\right)^{-s-a_{i_1}-b_{i_2}}\omega(t)\;dt\;ds.
    \end{align*}
By using $ |\zeta(\sigma+it)| \ll  t^{\tfrac{1-\sigma}{2}} \log t$ for $\sigma \in (0,1)$ and $ |\zeta(\sigma+it)| \ll 1$ for $\sigma \in [1.01,2]$, we observe that for $s=-\frac12+3\delta+iu$, we have  
    \begin{align*}
    \zeta(1-a_{i_1}-b_{i_2}-s)\prod_{\substack{{j_1}\in \K\backslash\{{i_1}\}\\{j_2}\in \Ll\backslash\{{i_2}\}}}\zeta(1+a_{j_1}+b_{j_2}+s)
    \ll \Big((|u|+1)^{\frac14-\frac{\delta}{2}}\log(2+|u|)\Big)^{(k-1)(\ell-1)}.
    \end{align*}
We also know by \cite[Proposition~5.2]{HN} that $\mathcal{A}( (\I \setminus \{ a_{i_1} \}) \cup \{ -b_{i_2}-s \},   ((\J \setminus \{ b_{i_2} \})+s)  \cup \{ - a_{i_1} \} )= O(1)$ when $\Re(s)\geq-1+2\delta+\epsilon$. It follows that  
      \begin{align*}
    &\int_{(-\frac12+3\delta)}\mathcal{A}( (\I \setminus \{ a_{i_1} \}) \cup \{ -b_{i_2}-s \},   
         ((\J \setminus \{ b_{i_2} \})+s)
          \cup \{ - a_{i_1} \} )K^{s}\Phi_2(s) \\&\hspace{1em}\times \zeta(1-a_{i_1}-b_{i_2}-s)\prod_{\substack{{j_1}\in \K\backslash\{{i_1}\}\\{j_2}\in \Ll\backslash\{{i_2}\}}}\zeta(1+a_{j_1}+b_{j_2}+s)\left(\frac{t}{2\pi}\right)^{-s}\;ds 
          \ll K^{-\frac12+3\delta}t^{\frac12-3\delta}.
    \end{align*}
Therefore,
\begin{equation}\label{eqn:off-diag-error}
\begin{split}
&\sum_{i_{1}\in\K,i_2\in\Ll}
c_{i_{1},i_{2}}\frac{1}{2\pi i}\int_{(-\frac12+3\delta)}\mathcal{A}( (\I \setminus \{ a_{i_1} \}) \cup \{ -b_{i_2}-s \},   
     ((\J \setminus \{ b_{i_2} \})+s)
      \cup \{ - a_{i_1} \} )K^{s}\Phi_2(s) \\&\hspace{1em}\times \zeta(1-a_{i_1}-b_{i_2}-s)\prod_{\substack{{j_1}\in \K\backslash\{{i_1}\}\\{j_2}\in \Ll\backslash\{{i_2}\}}}\zeta(1+a_{j_1}+b_{j_2}+s)\int_{-\infty}^{\infty}\left(\frac{t}{2\pi}\right)^{-s-a_{i_1}-b_{i_2}} \;ds\; \omega(t) dt\\
      &\ll K^{-\frac12+3\delta} \int_{-\infty}^{\infty} \omega(t)t^{\frac12-\delta}\;dt
      \ll K^{-\frac12+3\delta}T^{\frac32-\delta}.
\end{split}
\end{equation}
Note that since $K=T^{1+\eta}$, we require $\delta<\tfrac{\eta}{2(2+3\eta)}$. 

Next, we compute the terms $R_{11}(a, b)$, $R_{12}(a, b)$, $R_{21}(a, b)$ and $R_{22}(a, b)$ in \eqref{eqn:offdiag-residues+int}. We have
    $$
    R_{11} (a, b)
    = \text{Res}_{s=0} \left( I_{11}(s) \right) + \text{Res}_{s=-a-b} \left( I_{11}(s) \right).
    $$
For the first residue, we have
    \begin{equation*}
    \text{Res}_{s=0} \left( I_{11}(s) \right)  =  \text{Res}_{s=0} \left( \frac{U(s)}{s^2} \right) = U'(0),
    \end{equation*}
where 
    \[
      U(s) =  \Big( \frac{2 \pi}{t} \Big)^{a+b}  \frac{1}{\zeta(2-a-b)} \left(\frac{K}{\tfrac{t}{2\pi}}\right)^s \zeta(1-a-b-s) f(s) G(s).
    \]
Since $\displaystyle X=\log \Big(K/\tfrac{t}{2\pi}\Big)$, we have 
    \begin{equation*}
    \begin{split}
     U'(0)  =   \frac{  ( \frac{2 \pi}{t} )^{a+b}}{\zeta(2-a-b)}
      \Big(&
     X \zeta(1-a-b) f(0) G(0) - \zeta'(1-a-b) f(0)G(0) \\
     &  +  \zeta(1-a-b) f'(0) G(0)
     + \zeta(1-a-b) f(0) G'(0)
     \Big). 
    \end{split}
    \end{equation*}
It follows that 
    \begin{equation*}
    \text{Res}_{s=0} \left( I_{11}(s) \right) 
     = U'(0) 
      =   \frac{  ( \tfrac{2 \pi}{t} )^{a+b}}{\zeta(2-a-b)}
     \Big( - \zeta'(1-a-b)+  \zeta(1-a-b) (X + g_1 + c_1) \Big).  
    \end{equation*}
Since $s=-(a+b)$ is a simple pole, we have 
    \[
     \text{Res}_{s=-a-b} \left( I_{11}(s) \right) = - \Phi_2(-a-b) K^{-a-b} \zeta(1-a-b)  \frac{1}{\zeta(2-a-b)}.
    \]
Thus we obtain
    \begin{equation}\label{eqn:R11}
    \begin{split}
    R_{11}(a, b) 
     =\,& \frac{  \left( \frac{2 \pi}{t} \right)^{a+b}}{\zeta(2-a-b)}
     \Big( - \zeta'(1-a-b)+  \zeta(1-a-b) (X + g_1 + c_1)
      \Big) \\
      & - \Phi_2(-a-b) K^{-a-b} \zeta(1-a-b)  \frac{1}{\zeta(2-a-b)}\\
      =\,& - \left( \frac{2 \pi}{t}\right )^{a+b}h(-a-b)
     \bigg(  \frac{F(-a-b)}{(a+b)^2}+  \frac{f(-a-b)}{a+b} (X + g_1 + c_1)
      \bigg) \\
      &- K^{-a-b}h(-a-b)\frac{G(-a-b)}{a+b} \frac{f(-a-b)}{a+b} .
    \end{split}
    \end{equation}
Next for $ R_{22} $ , we note that
    $$
    R_{22}(a, b) = \text{Res}_{s=0} \left( I_{22}(s) \right) + \text{Res}_{s=-a-b} \left( I_{22}(s) \right).
    $$
Here
    \[
     \text{Res}_{s=0} \left( I_{22}(s) \right) =  \text{Res}_{s=0} \left( \frac{V(s)}{s^2}\right) = U'(0),
    \]
where 
    \[
      V(s) =  - \frac{1}{\zeta(2+a+b)} \left(\frac{K}{\tfrac{t}{2\pi}}\right)^s \zeta(1+a+b+s) f(-s) G(s).
    \]
We compute
    \begin{align*}
     V'(0)  =  - \frac{1}{\zeta(2+a+b)}
     \Big(
     & X \zeta(1+a+b) f(0) G(0) + \zeta'(1+a+b) f(0)G(0) \\
     &  +  \zeta(1+a+b) (-1) f'(0) G(0)
     + \zeta(1+a+b) f(0) G'(0)
     \Big). 
    \end{align*}
It then follows that 
    \[
     \text{Res}_{s=0} \left( I_{22}(s) \right) = V'(0)  =  - \frac{1}{\zeta(2+a+b)}
     \left( \zeta'(1+a+b)+  \zeta(1+a+b) (X - g_1 + c_1)
      \right).  
    \]
For the other residue, since $s=-(a+b)$ is a simple pole we have 
    \[
     \text{Res}_{s=-a-b} \left( I_{22}(s) \right) = \Phi_2(-a-b) K^{-a-b} \Big( \frac{2 \pi}{t} \Big)^{-a-b} \zeta(1+a+b)  \frac{1}{\zeta(2+a+b)}.
    \]
Hence
    \begin{equation}\label{eqn:R22}
    \begin{split}
    R_{22}(a, b)  =\,&- \frac{1}{\zeta(2+a+b)}
     \left( \zeta'(1+a+b)+  \zeta(1+a+b) (X - g_1 + c_1)
      \right)
     \\&+  \Phi_2(-a-b) K^{-a-b} \Big( \frac{2 \pi}{t} \Big)^{-a-b} \zeta(1+a+b)  \frac{1}{\zeta(2+a+b)}\\
     =\, & -h(a+b)
     \left( \frac{F(a+b)}{(a+b)^2}+  \frac{f(a+b)}{a+b} (X - g_1 + c_1)
      \right)
     \\
      &+  K^{-a-b}\Big( \frac{2 \pi}{t} \Big)^{-a-b}  h(a+b)\frac{G(-a-b)}{-a-b}  \frac{f(a+b)}{a+b}.
    \end{split}
    \end{equation}
It remains to compute $R_{12}$ and $R_{21}$. We have
    $$
    R_{12}(a, b) 
    = \text{Res}_{s=0} \left( I_{12}(s) \right) + \text{Res}_{s=-a} \left( I_{12}(s) \right) + \text{Res}_{s=-b} \left( I_{12}(s) \right)
    $$
and 
    $$
    R_{21}(a, b)
    = \text{Res}_{s=0} \left( I_{21}(s)\right) + \text{Res}_{s=-a} \left( I_{21}(s) \right) + \text{Res}_{s=-b} \left( I_{21}(s)\right).
    $$
For $R_{12}(a, b)$, we note that
    \begin{align*}
    \text{Res}_{s=0} \left( I_{12}(s) \right) & =  \left( \frac{2 \pi}{t} \right)^{a}  \zeta(1-a) \zeta(1+b)  \frac{1}{\zeta(2+b-a)}, \\
     \text{Res}_{s=-a} \left( I_{12}(s) \right) & = - \Phi_2(-a) K^{-a}\zeta(1+b-a)  \frac{1}{\zeta(2+b-a)}, \\
\text{and} \quad \quad   \text{Res}_{s=-b} \left( I_{12}(s) \right) & =  \Phi_2(-b) K^{-b} \left( \frac{2 \pi}{t}\right )^{a-b} \zeta(1+b-a)  \frac{1}{\zeta(2+b-a)},
    \end{align*}
so
    \begin{equation}\label{eqn:R12}
    \begin{split}
      R_{12} (a, b)
      =\,  &  -\left( \frac{2 \pi}{t}\right )^{a} h(b-a) \frac{f(-a)}{a} \frac{f(b)}{b}   \\
      & + K^{-a}h(b-a) \frac{G(-a)}{a} \frac{f(b-a)}{b-a}   -  K^{-b} \left( \frac{2 \pi}{t} \right)^{a-b} h(b-a)\frac{G(-b)}{b} \frac{f(b-a)}{b-a}  .
    \end{split}
    \end{equation}
For $R_{21}(a, b) $, we will use
    \begin{equation*}
    \begin{split}
     \text{Res}_{s=0} \left( I_{21}(s)\right)  & = \left( \frac{2 \pi}{t}\right)^{b}  \zeta(1-b) \zeta(1+a)  \frac{1}{\zeta(2+a-b)}, \\
     \text{Res}_{s=-a} \left( I_{21}(s) \right) & = -  \Phi_2(-b) K^{-b}\zeta(1+a-b)  \frac{1}{\zeta(2+a-b)}, \\
     \text{Res}_{s=-b} \left( I_{21}(s)\right) & =  \Phi_2(-a) K^{-a} \left( \frac{2 \pi}{t} \right)^{b-a} \zeta(1-b+a)  \frac{1}{\zeta(2+a-b)}, \\
    \end{split}
    \end{equation*}
and find that
    \begin{equation}\label{eqn:R21}
    \begin{split}
      R_{21}(a, b) & =  -\left( \frac{2 \pi}{t} \right)^{b} h(a-b) \frac{f(-b)}{b}\frac{f(a)}{a}  \\
       & +K^{-b}h(a-b)\frac{G(-b)}{b}  \frac{f(a-b)}{a-b}   -K^{-a} \left( \frac{2 \pi}{t} \right)^{b-a}h(a-b)\frac{G(-a)}{a}\frac{f(a-b)}{a-b}. \\
    \end{split}
    \end{equation}
Inserting \eqref{eqn:off-diag-error}, \eqref{eqn:R11}, \eqref{eqn:R22}, \eqref{eqn:R12},  \eqref{eqn:R21} in \eqref{eqn:offdiag-residues+int} yields the desired result.
\end{proof}

Now we will rewrite $\mathcal{R}_2(a, b)$ in order to simplify its expression. We set  
\[
\mathcal{L}' = X + g_1 + c_1 \quad \text{and} \quad \mathcal{L}'' = X - g_1 + c_1,
\]
and also
\begin{equation}\label{eqn:kappas}
\begin{split}
    \kappa_{25}(a,b) & = - E_1(-a-b) h(-a-b) f(-a)f(-b)F(-a-b),  \\
    \tilde{\kappa}_{25}(a,b) &  = E_1(-a-b)  h(-a-b)f(-a)f(-b)f(-a-b)  , \\
     \kappa_{26}(a,b) & =E_1(-a) h(b-a) f(-a)^2 f(b)^2  ,  \\
      \kappa_{27}(a,b) & = E_1(-b)h(a-b) f(a)^2 f(-b)^2  ,   \\
       \kappa_{28}(a,b)  & =h(a+b) f(a)f(b)((a+b)f'(a+b)-f(a+b)) , \\
        \tilde{\kappa}_{28}(a,b)& =h(a+b) f(a) f(b) f(a+b) , \\
           \kappa_{29}(a,b)  & = E_2(-b)E_1(b-a) h(b-a) G(-b)f(-a)f(b)  f(b-a) ,\\
              \kappa_{210}(a,b)  & =E_2(-a)E_1(a-b) h(a-b)  G(-a) f(a)f(-b)f(a-b)  ,\\
   \kappa_{211}(a,b)  & =  E_2(-a-b) E_1(a+b) h(a+b)  f(a)f(b) G(-a-b) f(a+b) . 
   \end{split}
\end{equation} 
With this notation and by \eqref{eq:R2}, we can write 
\begin{equation*}
 \mathcal{R}_2(a, b)= \big(\mathcal{R}_{25} + \mathcal{R}_{26} +  \mathcal{R}_{27} +  \mathcal{R}_{28} 
 +  \mathcal{R}_{29} +  \mathcal{R}_{210} +  \mathcal{R}_{211}\big)(a, b),
\end{equation*}
where we set
\begin{equation}\label{eqn:R2}
\begin{split}
      \mathcal{R}_{25}(a, b)  & = \frac{1}{ab(a+b)^2} \kappa_{25}(a,b)- \frac{1}{ab(a+b)} \tilde{\kappa}_{25}(a,b) \mathcal{L}', \\
        \mathcal{R}_{26} (a, b) & =  \frac{1}{(ab)^2} \kappa_{26}(a,b), \\
          \mathcal{R}_{27}(a, b)  & = \frac{1}{(ab)^2}  \kappa_{27}(a,b),  \\  
          \mathcal{R}_{28} (a, b) & = - \frac{1}{ab(a+b)^2} \kappa_{28}(a,b)
           - \frac{1}{ab(a+b)}  \tilde{\kappa}_{28}(a,b) \mathcal{L}'', \\
            \mathcal{R}_{29}  (a, b)& = \frac{1}{ab^2(b-a)} \kappa_{29}(a,b), \\
              \mathcal{R}_{210} (a, b) & = \frac{1}{a^2 b(a-b)} \kappa_{210}(a,b), \\
                \mathcal{R}_{211} (a, b) & = - \frac{1}{a b(a+b)^2} \kappa_{211}(a,b).
\end{split}
\end{equation}
By Theorem \ref{HN-main}, Proposition \ref{prop:M0} and Proposition \ref{prop:M1} we arrive at the following proposition.

\begin{prop}
Let $\I=\{a, 0\} $ and $\J=\{b, 0\} $. Then
\begin{equation*}
\mathscr{D}_{\I,\J;\omega}(K)
=\int_{-\infty}^{\infty} \omega(t) \cdot  \mathcal{R}(a, b) \, dt +O\bigg(T^{\frac34(1+\eta)+\e}\Big(\frac{T}{T_0}\Big)^{\frac94}+T^{1-\frac{\eta}{2}}\bigg),
\end{equation*}
where
\begin{align*}
      \mathcal{R}(a, b)   =  \frac{1}{ab} \Bigg(& \frac{1}{(a+b)} (L+2 g_1) \kappa_{11}(a,b) 
   +2  \frac{1}{(a+b)}  \tilde{\kappa}_{11}(a,b)   +  \frac{1}{a(a+b)} \kappa_{13}(a,b) \\
   &+   \frac{1}{b(a+b)} \kappa_{14}(a,b)  
      +   \frac{1}{(a+b)^2} \kappa_{25}(a,b)- \frac{1}{(a+b)} \tilde{\kappa}_{25}(a,b) \mathcal{L}' +
       \frac{1}{ab} \kappa_{26}(a,b) \\
       &+ \frac{1}{ab}  \kappa_{27}(a,b)   
        + 
         \frac{1}{b(b-a)} \kappa_{29}(a,b)+ \frac{1}{a (a-b)} \kappa_{210}(a,b)-  \frac{1}{(a+b)^2} \kappa_{211}(a,b)
        \Bigg). 
\end{align*}
\end{prop}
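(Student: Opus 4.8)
The plan is to assemble the three results already established---Theorem~\ref{HN-main}, Proposition~\ref{prop:M0}, and Proposition~\ref{prop:M1}---and then carry out an elementary simplification; no new analytic input is required. First I would apply Theorem~\ref{HN-main} with $\I=\{a,0\}$ and $\J=\{b,0\}$, the hypotheses on $\omega$, $\eta$, and on the sizes of $a,b$ being those carried over from Proposition~\ref{prop:M1}. Since $|\I|=|\J|=2$, the additive divisor conjecture $\mathcal{AD}_{2,2}$ holds unconditionally with $(\vartheta_{2,2},C_{2,2},\beta_{2,2})=(\tfrac34,\tfrac54,1)$ by Hughes--Young \cite[p.~218]{HY}, and the assumption $\nu>\tfrac{5+3(\eta+1)}{9}$ together with $T_0\ge T^\nu$ turns the resulting error term into a genuine power saving. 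Theorem~\ref{HN-main} thus gives
\[
\mathscr{D}_{\I,\J;\omega}(K)=\mathcal{M}_{0,\I,\J;\omega}(K)+\mathcal{M}_{1,\I,\J;\omega}(K)+O\!\left(K^{3/4+\e}\Big(\tfrac{T}{T_0}\Big)^{9/4}\right),
\]
and since $K=T^{1+\eta}$ this error equals $O\big(T^{\frac34(1+\eta)+\e}(T/T_0)^{9/4}\big)$ after renaming $\e$.

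Next I would substitute the evaluations from Propositions~\ref{prop:M0} and~\ref{prop:M1}: we have $\mathcal{M}_{0,\I,\J;\omega}(K)=\widehat{\omega}(0)\big(\mathcal{R}_1(a,b)+\mathcal{R}_1'(a,b)\big)+O\big(TK^{-1/2+2\delta}\big)$ and $\mathcal{M}_{1,\I,\J;\omega}(K)=\int_{\R}\omega(t)\big(-\mathcal{R}_1'(a,b)+\mathcal{R}_2(a,b)\big)\,dt+O\big(K^{-1/2+3\delta}T^{3/2-\delta}\big)$. Because $\mathcal{R}_1(a,b)$ and $\mathcal{R}_1'(a,b)$ do not depend on $t$ while $\widehat{\omega}(0)=\int_{\R}\omega(t)\,dt$, I may write $\widehat{\omega}(0)\mathcal{R}_1=\int_{\R}\omega(t)\mathcal{R}_1\,dt$ and similarly for $\mathcal{R}_1'$; adding the two main terms, the $\pm\mathcal{R}_1'(a,b)$ contributions cancel, leaving
\[
\mathscr{D}_{\I,\J;\omega}(K)=\int_{\R}\omega(t)\big(\mathcal{R}_1(a,b)+\mathcal{R}_2(a,b)\big)\,dt+O\!\left(T^{\frac34(1+\eta)+\e}\Big(\tfrac{T}{T_0}\Big)^{9/4}+TK^{-1/2+2\delta}+K^{-1/2+3\delta}T^{3/2-\delta}\right).
\]
Choosing $\delta\asymp 1/\log T$---which is consistent with the standing bound $|a|,|b|\le\delta$ and with \eqref{sizerestriction}---one gets $T^{2\delta(1+\eta)}\ll 1$ and $T^{\delta(2+3\eta)}\ll 1$, so $TK^{-1/2+2\delta}\ll T^{1/2-\eta/2}$ and $K^{-1/2+3\delta}T^{3/2-\delta}\ll T^{1-\eta/2}$; the three auxiliary error terms thus collapse to $O\big(T^{\frac34(1+\eta)+\e}(T/T_0)^{9/4}+T^{1-\eta/2}\big)$, which is the error in the statement.

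It then remains to verify the algebraic identity $\mathcal{R}_1(a,b)+\mathcal{R}_2(a,b)=\mathcal{R}(a,b)$. Using the decompositions $\mathcal{R}_1=\mathcal{R}_{11}+\mathcal{R}_{12}+\mathcal{R}_{13}+\mathcal{R}_{14}$ of \eqref{eqn:R1} and $\mathcal{R}_2=\mathcal{R}_{25}+\mathcal{R}_{26}+\mathcal{R}_{27}+\mathcal{R}_{28}+\mathcal{R}_{29}+\mathcal{R}_{210}+\mathcal{R}_{211}$ of \eqref{eqn:R2}, and comparing the definitions in \eqref{eqn:kappas} with those preceding \eqref{eqn:R1}, one notes the two coincidences $\kappa_{28}(a,b)=\kappa_{12}(a,b)=f(a)f(b)F(a+b)h(a+b)$ and $\tilde{\kappa}_{28}(a,b)=\kappa_{11}(a,b)=f(a)f(b)f(a+b)h(a+b)$. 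Consequently the summand $\tfrac{1}{ab(a+b)^2}\kappa_{12}$ of $\mathcal{R}_{12}$ is cancelled exactly by $-\tfrac{1}{ab(a+b)^2}\kappa_{28}$ in $\mathcal{R}_{28}$---which explains why no $\kappa_{12}$ term survives in $\mathcal{R}(a,b)$---while the two $\kappa_{11}$-summands from $\mathcal{R}_{11}$ and $\mathcal{R}_{28}$ combine, via
\[
\mathcal{L}_0-\mathcal{L}''=(Y+c_1+g_1)-(X-g_1+c_1)=Y-X+2g_1=L+2g_1
\]
(using $X=Y-L$), into $\tfrac{1}{ab(a+b)}(L+2g_1)\kappa_{11}$. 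Collecting the remaining pieces---the $\tilde{\kappa}_{11}$ part of $\mathcal{R}_{11}$ together with $\mathcal{R}_{13},\mathcal{R}_{14},\mathcal{R}_{25},\mathcal{R}_{26},\mathcal{R}_{27},\mathcal{R}_{29},\mathcal{R}_{210},\mathcal{R}_{211}$---and factoring out $\tfrac{1}{ab}$ reproduces exactly the displayed formula for $\mathcal{R}(a,b)$, completing the proof.

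Since all the delicate analysis is contained in Theorem~\ref{HN-main} and Propositions~\ref{prop:M0}--\ref{prop:M1}, there is no genuinely hard step left here; the only points requiring care are organizational: correctly matching the many $\kappa$-functions across \eqref{eqn:kappas}, tracking the two cancellations (of $\mathcal{R}_1'$ between $\mathcal{M}_0$ and $\mathcal{M}_1$, and of $\kappa_{12}$ between $\mathcal{R}_{12}$ and $\mathcal{R}_{28}$), keeping $X$, $Y$, and $L$ straight in the coefficient of $\kappa_{11}$, and calibrating $\delta\asymp 1/\log T$ so that the auxiliary error terms are dominated by $T^{1-\eta/2}$.
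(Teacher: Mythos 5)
Your proposal is correct and follows essentially the same route as the paper: combine Theorem \ref{HN-main} with Propositions \ref{prop:M0} and \ref{prop:M1}, cancel the $\pm\mathcal{R}_1'(a,b)$ contributions, and use the identities $\kappa_{12}=\kappa_{28}$, $\kappa_{11}=\tilde{\kappa}_{28}$, and $\mathcal{L}_0-\mathcal{L}''=L+2g_1$ to reach $\mathcal{R}(a,b)$. The only difference is that you spell out the error-term calibration (taking $\delta\asymp 1/\log T$ so the auxiliary errors are absorbed into $T^{1-\eta/2}$), which the paper leaves implicit.
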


\begin{proof}
We have $\mathcal{R}(a, b)=\mathcal{R}_1(a, b)+\mathcal{R}_{2}(a, b)$. The result follows from  \eqref{eqn:R1}, \eqref{eqn:R2}, and the observations that $\kappa_{12}(a,b) = \kappa_{28}(a,b)$, $\kappa_{11}(a,b)  =\tilde{\kappa}_{28}(a,b)$, and 
 $ \mathcal{L}_0-\mathcal{L}'' = \log \tfrac{t}{2 \pi} + 2g_1 = L +2 g_1$. 
 \end{proof}



\subsection{Computing $\mathcal{R}(a,a)=\lim_{a,b\to0}\mathcal{R}(a,b)$} Our goal is now reduced to computing the limit of $\mathcal{R}(a, b)$ as $a,b\to0$. To this end, we write down the Taylor series expansions of the entire functions $\kappa_{1*}$, $\kappa_{2*}$ and $\tilde{\kappa}_{2*}$ using \eqref{eq:notation} and \eqref{eqn:kappas}, and then we combine the terms with similar coefficients to obtain the expression 
\begin{equation}\label{eq:defn R}
  \mathcal{R}(a, b) = \left(A_1 + \tilde{A}_1 + A_2 + A_3 + A_4 + A_5 +A_6\right)(a,b),
\end{equation}
where the functions $A_1, \tilde{A}_1, A_2, A_3, A_4, A_5$, and $A_6$  are given as follows:
\begin{align*} 
 A_1  (a,b)
 =\, &  \frac{1}{ab(a+b)} (L+2 g_1) \kappa_{11}(a,b) 
    +  \frac{1}{a^2b(a+b)} \kappa_{13}(a,b)+   \frac{1}{ab^2(a+b)} \kappa_{14}(a,b)   
 \\
 =\, &\frac{1}{ab(a+b)}
 \sum_{j_1,j_2,j_3} g_{j_1} g_{j_2} (g \star \delta)_{j_3} 
 (L+2g_1) a^{j_1} b^{j_2} (a+b)^{j_3} \\
 &+\frac{1}{a^2b(a+b)}
 \sum_{j_1,j_2,j_3} g_{j_1} g_{j_2} (g \star \delta)_{j_3} 
 (j_1-1)a^{j_1} b^{j_2} (a+b)^{j_3}  \\
 &+\frac{1}{ab^2(a+b)}
 \sum_{j_1,j_2,j_3} g_{j_1} g_{j_2} (g \star \delta)_{j_3} 
  (j_2-1) a^{j_1} b^{j_2} (a+b)^{j_3} , \\
  \tilde{A}_1 (a,b) =\, &   \frac{2}{ab(a+b)}  \tilde{\kappa}_{11}(a,b)    
 =\frac{2}{ab(a+b)}  \sum_{j_1,j_2,j_3}  g_{j_1} g_{j_2} (g \star \delta')_{j_3} 
   a^{j_1} b^{j_2} (a+b)^{j_3} , \\
 A_2 (a,b)
  =\,&  \frac{1}{(ab)^2} \kappa_{26}(a,b) + \frac{1}{(ab)^2}  \kappa_{27}(a,b)   \\
  =\, & \frac{1}{(ab)^2} \sum_{j_1,j_2,j_3} (-1)^{j_1} 
  (\alpha*g*g)_{j_1} (g*g)_{j_2} \delta_{j_3} \Big\{  a^{j_1} b^{j_2} + (-1)^{j_3} a^{j_2} b^{j_1}\Big\} (b-a)^{j_3}, \\
 A_3 (a,b)
  =\, & \frac{1}{ab^2(b-a)} \kappa_{29}(a,b)+ \frac{1}{a^2b (a-b)} \kappa_{210}(a,b)   \\
 =\, &\frac{1}{ab^2(b-a)} \sum_{j_1, j_2, j_3}  
      g_{j_1} (c \star \beta \star (-1)^{\bullet} g)_{j_2}
       (g \star \alpha \star \delta)_{j_3}  (-1)^{j_1+j_2}
      a^{j_1} b^{j_2}  (b-a)^{j_3}\\
      &-\frac{1}{a^2b(b-a)} \sum_{j_1, j_2, j_3}  
      g_{j_1} (c \star \beta \star (-1)^{\bullet} g)_{j_2}
       (g \star \alpha \star \delta)_{j_3}  (-1)^{j_1+j_2}
        (-1)^{j_3}  a^{j_2} b^{j_1}   (b-a)^{j_3}, \\
 A_4(a,b)  
 =\, &-  \frac{1}{ab(a+b)^2} \kappa_{211}(a,b) \\
 =\, & -\frac{1}{ab(a+b)^2}   \sum_{j_1,j_2,j_3} g_{j_1} g_{j_2} (g \star \delta \star \alpha \star (-1)^{\bullet} c \star
     (-1)^{\bullet} \beta)_{j_3}  a^{j_1} b^{j_2} (a+b)^{j_3} , \\
 A_5 (a,b)
  =\, & \frac{1}{ab(a+b)^2} \kappa_{25}(a,b)
 = -\frac{1}{ab(a+b)^2} 
 \sum_{j_1, j_2, j_3} g_{j_1} g_{j_2}   (\alpha \star g' \star \delta)_{j_3} (-1)^{j_1+j_2+j_3}  a^{j_1} b^{j_2} (a+b)^{j_3}, \\
 A_6  (a,b)
 =\,& - \frac{\mathcal{L}' }{ab(a+b)} \tilde{\kappa}_{25}(a,b) 
 =  -\frac{\mathcal{L}'}{ab(a+b)}   \sum_{j_1,j_2, j_3} g_{j_1} g_{j_2} (g \star \alpha \star \delta)_{j_3} (-1)^{j_1+j_2+j_3}  a^{j_1} b^{j_2} (a+b)^{j_3}.
\end{align*}
 We will first compute $\lim_{b\to a}\mathcal{R}(a, b)$ and then use Maple to find $\lim_{a\to0} \left(\lim_{b\to a} \mathcal{R}(a, b)\right)$. It is straightforward to see that 
    \begin{equation}\label{eq:A1-A6}
    \begin{split}
     \lim_{b\to a}  A_1  (a,b)
    =\, &\frac{1}{2a^3}
     \sum_{j_1,j_2,j_3} g_{j_1} g_{j_2} (g \star \delta)_{j_3} 
     (L+2g_1)  2^{j_3}a^{j_1+j_2+j_3} \\
     &+\frac{1}{2a^4}
     \sum_{j_1,j_2,j_3} g_{j_1} g_{j_2} (g \star \delta)_{j_3} 
     (j_1-1)2^{j_3}a^{j_1+j_2+j_3}    \\
     &+\frac{1}{2a^4}
     \sum_{j_1,j_2,j_3} g_{j_1} g_{j_2} (g \star \delta)_{j_3} 
      (j_2-1) 2^{j_3} a^{j_1+j_2+j_3} ,    \\
     \lim_{b\to a}  \tilde{A}_1   (a,b)
     =\, &\frac{1}{a^3}  \sum_{j_1,j_2,j_3}  g_{j_1} g_{j_2} (g \star \delta')_{j_3} 
       2^{j_3}  a^{j_1+j_2+j_3}  ,   \\
     \lim_{b\to a}A_2 (a,b)  =\, & \frac{2\delta_0}{a^4} \sum_{j_1,j_2} (-1)^{j_1} 
      (\alpha*g*g)_{j_1} (g*g)_{j_2}  a^{j_1+j_2},   \\
    \lim_{b\to a}  A_4  (a,b)
     =\, & -\frac{1}{4a^4}   \sum_{j_1,j_2,j_3} g_{j_1} g_{j_2} (g \star \delta \star \alpha \star (-1)^{\bullet} c \star
         (-1)^{\bullet} \beta)_{j_3}2^{j_3}    a^{j_1+j_2+j_3}  ,   \\
     \lim_{b\to a} A_5 (a,b) =\, & -\frac{1}{4a^4} 
     \sum_{j_1,j_2, j_3} g_{j_1} g_{j_2}   (\alpha \star g' \star \delta)_{j_3} (-1)^{j_1+j_2+j_3} 2^{j_3} a^{j_1+j_2+j_3} ,  \\
    \text{and}  \quad\quad\quad\quad\quad\quad
     \lim_{b\to a} A_6(a,b)  =\, &  -\frac{\mathcal{L}'}{2a^3}   \sum_{j_1,j_2, j_3} g_{j_1} g_{j_2} (g \star \alpha \star \delta)_{j_3} (-1)^{j_1+j_2+j_3}2^{j_3}  a^{j_1+j_2+j_3}.   
    \end{split}
    \end{equation}
It remains to compute $\lim_{b\to a}A_3(a,b)$. We have
    \begin{equation*}
    \begin{split}\label{simplifiedA3}
     A_3 (a,b)
     =\, & \frac{1}{ab^2(b-a)} \sum_{j_1, j_2}  
          g_{j_1} (c \star \beta \star (-1)^{\bullet} g)_{j_2}
            (-1)^{j_1+j_2}
          a^{j_1} b^{j_2} \\
          &\hspace{6em}\times \Big( (g \star \alpha \star \delta)_{0}+ (g \star \alpha \star \delta)_{1}(b-a)+\sum_{j_3\geq2}(g \star \alpha \star \delta)_{j_3} (b-a)^{j_3} \Big)\\&-\frac{1}{a^2b(b-a)} \sum_{j_1, j_2}  
          g_{j_1} (c \star \beta \star (-1)^{\bullet} g)_{j_2}
             (-1)^{j_1+j_2}
             a^{j_2} b^{j_1} \\
             &\hspace{6em} \times \Big((g \star \alpha \star \delta)_{0}-(g \star \alpha \star \delta)_{1}(b-a)+\sum_{j_3\geq2}(g \star \alpha \star \delta)_{j_3}(-1)^{j_3}  (b-a)^{j_3}\Big).
    \end{split}
    \end{equation*}
It follows that 
    \begin{equation}
    \begin{split} \label{limA3}
   &\lim_{b\to a} A_3(a,b) \\
     =\, & (g \star \alpha \star \delta)_{0}
     \lim_{b\to a}\bigg\{\frac{a}{a^2b^2(b-a)} \sum_{j_1}  
      g_{j_1} (-1)^{j_1}a^{j_1}\sum_{j_2}(c \star \beta \star (-1)^{\bullet} g)_{j_2}
        (-1)^{j_2}
      b^{j_2}\\
      &\hspace{8em} -  \frac{b}{a^2b^2(b-a)} \sum_{j_1}  
      g_{j_1} (-1)^{j_1}b^{j_1}\sum_{j_2}(c \star \beta \star (-1)^{\bullet} g)_{j_2}
         (-1)^{j_2}
         a^{j_2}  \bigg\}\\
      &+ \frac{(g \star \alpha \star \delta)_{1}}{a^3}\sum_{j_1}g_{j_1}(-1)^{j_1}a^{j_1}\sum_{j_2}(c \star \beta \star (-1)^{\bullet} g)_{j_2}(-1)^{j_2}a^{j_2}.
    \end{split}
    \end{equation}
At this point, we need the following lemma to simplify the limit on the right-hand side of \eqref{limA3}. 

\begin{lemma}
    Let $f_1$ and $f_2$ be entire functions. Consider 
        \[
        F(z_1,z_2) := \frac{f_1(z_1)f_2(z_2)-f_1(z_2) f_2(z_1)}{z_1-z_2}.
        \]
    Then 
        \[
        \lim_{b\to a}F(a,b)
         =   f_1'(a) f_2(a) -f_1(a) f_2'(a).
         \]
 \end{lemma}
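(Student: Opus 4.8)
The plan is to recognize $F(z_1,z_2)$ as a divided difference and pass to the limit using the definition of the derivative, exploiting the fact that both $f_1$ and $f_2$ are entire (hence analytic near $a$, so all the relevant difference quotients converge). First I would rewrite the numerator so that each term is compared against the ``diagonal'' value at $z_2=a$; the natural way to do this is to add and subtract $f_1(a)f_2(a)$ inside each product, or equivalently to introduce the two finite differences $f_i(z_1)-f_i(z_2)$.

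Concretely, set $b$ as the variable tending to $a$ and write
\[
F(a,b) = \frac{f_1(a)f_2(b)-f_1(b)f_2(a)}{a-b}.
\]
The key algebraic identity is
\[
f_1(a)f_2(b)-f_1(b)f_2(a) = f_1(a)\bigl(f_2(b)-f_2(a)\bigr) - f_2(a)\bigl(f_1(b)-f_1(a)\bigr),
\]
which is immediate upon expanding. Dividing by $a-b$ gives
\[
F(a,b) = -f_1(a)\,\frac{f_2(b)-f_2(a)}{b-a} + f_2(a)\,\frac{f_1(b)-f_1(a)}{b-a}.
\]
Now let $b\to a$. Since $f_1$ and $f_2$ are entire, they are in particular differentiable at $a$, so $\frac{f_i(b)-f_i(a)}{b-a}\to f_i'(a)$ as $b\to a$. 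Therefore
\[
\lim_{b\to a}F(a,b) = -f_1(a)f_2'(a) + f_2(a)f_1'(a) = f_1'(a)f_2(a) - f_1(a)f_2'(a),
\]
as claimed.

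There is essentially no obstacle here: the only subtlety is making sure the divided-difference rearrangement is the one that isolates two genuine Newton quotients rather than something that still has a $0/0$ indeterminacy, and the identity above does exactly that. One could equivalently invoke that $F(z_1,z_2)$ extends to an entire function of two variables (the divided difference of entire functions is entire) and evaluate on the diagonal, but the elementary computation above is cleaner and self-contained, so that is the route I would write up.
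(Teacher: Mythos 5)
Your proof is correct and is essentially the same argument as the paper's: you add and subtract $f_1(a)f_2(a)$ to split $F(a,b)$ into two genuine difference quotients and then let $b\to a$, exactly as done in the paper. No issues.
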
 
 
\begin{proof}
     Note that if $a \neq b$, then
        \[
        F(a,b) 
         =   \frac{(f_1(a)-f_1(b))f_2(a)}{a-b}
        -  \frac{f_1(a)(f_2(a)- f_2(b))}{a-b}.
        \]
    As $b \to a$, we  obtain $F(a,b)\to    f_1'(a) f_2(a) -f_1(a) f_2'(a)$. 
\end{proof}

We may apply this lemma for
   \[
   f_{1}(z)
   =z\sum_{j_1}g_{j_1}(-1)^{j_1}z^{j_1}\quad \text{and}\quad f_{2}(z)=\sum_{j_2}(c \star \beta \star (-1)^{\bullet} g)_{j_2}  (-1)^{j_2}z^{j_2}.
    \]
 These are both entire functions since $f_1(z)=-z^2\zeta(1-z)$ and $f_2(z)=zG(-z)e^{-zY}\zeta(1+z)$. When we apply the lemma in this setting, \eqref{limA3} becomes
    \begin{equation*}
    \begin{split}
    \lim_{b\to a} A_3 =& \,
    (g \star \alpha \star \delta)_{0} \lim_{b\to a}\bigg(\frac{1}{a^2b^2(b-a)} f_{1}(a)f_2(b)-  \frac{1}{a^2b^2(b-a)} f_1(b)f_2(a) \bigg)\\
    &+ \frac{(g \star \alpha \star \delta)_{1}}{a^4}f_1(a)f_{2}(a)
    \\
    =&\, \frac{(g \star \alpha \star \delta)_{0} }{a^4}\Big( f_1'(a) f_2(a) -f_1(a) f_2'(a)\Big)+ \frac{(g \star \alpha \star \delta)_{1}}{a^4}f_1(a)f_{2}(a)\\
    =&\, \frac{(g \star \alpha \star \delta)_{0} }{a^4}\sum_{j_1,j_2}g_{j_1}(c \star \beta \star (-1)^{\bullet} g)_{j_2}(j_2-j_1-1)(-1)^{j_1+j_2}a^{j_1+j_2}\\
    &+\frac{2(g \star \alpha \star \delta)_{1} }{a^3}\sum_{j_1,j_2}g_{j_1}(c \star \beta \star (-1)^{\bullet} g)_{j_2}(-1)^{j_1+j_2}a^{j_1+j_2}.
      \end{split}
      \end{equation*}
Then upon adding the right-hand sides of \eqref{eq:A1-A6} to the right-hand side of the last equation above, we obtain 
\begin{equation*}
\begin{split}
\mathcal{R}(a,a)&=\lim_{b\to a}\mathcal{R}(a, b)\\&=\frac{1}{a^4}\bigg(\sum_{j_1,j_2,j_3}C_1(j_1,j_2,j_3)a^{j_1+j_2+j_3}+\sum_{j_1,j_2}C_2(j_1,j_2)a^{j_1+j_2}\bigg)\\&\hspace{2em}+\frac{1}{a^3}\bigg(\sum_{j_1,j_2,j_3}D_1(j_1,j_2,j_3)a^{j_1+j_2+j_3}+\sum_{j_1,j_2}D_2(j_1,j_2)a^{j_1+j_2}\bigg),
\end{split}
\end{equation*}
where 
\begin{align*}
C_1(j_1,j_2,j_3)=&\, \frac{1}{2} (j_1+j_2-2)2^{j_3}
      g_{j_1} g_{j_2} (g \star \delta)_{j_3} 
     -\frac{1}{4} 2^{j_3}  g_{j_1} g_{j_2} (g \star \delta \star \alpha \star (-1)^{\bullet} c \star
         (-1)^{\bullet} \beta)_{j_3} \\
         &  -\frac{1}{4} (-1)^{j_1+j_2+j_3} 2^{j_3}g_{j_1} g_{j_2}   (\alpha \star g' \star \delta)_{j_3}  ,
\\
C_2(j_1,j_2) 
=
&\,  2\delta_0  (-1)^{j_1} 
      (\alpha*g*g)_{j_1} (g*g)_{j_2} 
      \\
   &+(-1)^{j_1+j_2}(j_2-j_1-1) (g \star \alpha \star \delta)_{0} g_{j_1}(c \star \beta \star (-1)^{\bullet} g)_{j_2},
\\
D_1(j_1,j_2,j_3)
=&\,
\frac{1}{2}(L+2g_1)  2^{j_3}
     g_{j_1} g_{j_2} (g \star \delta)_{j_3} + 2^{j_3}g_{j_1} g_{j_2} (g \star \delta')_{j_3} 
     \\ 
     &-\frac{\mathcal{L}'}{2} (-1)^{j_1+j_2+j_3}2^{j_3} g_{j_1} g_{j_2} (g \star \alpha \star \delta)_{j_3} 
         \end{align*}
 and
  \begin{align*}
 D_2(j_1,j_2)&=2(-1)^{j_1+j_2}(g \star \alpha \star \delta)_{1} g_{j_1}(c \star \beta \star (-1)^{\bullet} g)_{j_2}.
 \end{align*}
Hence
\begin{align*}
\mathcal{R}(a,a)&=\frac{1}{a^4} \sum_{j=0}^{\infty}C(j)a^{j},
\end{align*}
where \[C(0)=C_1(0,0,0)+C_2(0,0),\] and  for $i\in\mathbb{N}$ we have
\begin{align*}
C(j)
=&\,\sum_{\substack{j_1,j_2,j_3\\j_1+j_2+j_3=j-1}}D_1(j_1,j_2,j_3)+\sum_{\substack{j_1,j_2\\j_1+j_2=j-1}}D_2(j_1,j_2)
\\
&+\sum_{\substack{j_1,j_2,j_3\\j_1+j_2+j_3=j}}C_1(j_1,j_2,j_3)+\sum_{\substack{j_1,j_2\\j_1+j_2=j}}C_2(j_1,j_2).
\end{align*}
 Using Maple we show that $C(j)=0$ for $j=0,1,2,3$, and we compute
\begin{equation*}
\begin{split}
 \label{R00}
 \mathcal{R}(0,0) =
C(4)
=
\, & -\frac{7}{12}\delta_0 L^{4}-\delta_{0} L^{2} Y^{2} +\frac{4}{3} \delta_{0} L^{3} Y+\frac{1}{3} \delta_{0} L Y^{3}-\frac{1}{24} \delta_{0} Y^{4}\\
&+(-2 \delta_1 +\frac{4}{3} \delta_{0}c_1 -4 g_{1}\delta_0)L^3  +(\frac{1}{3} \delta_{1} -\frac{1}{6} \delta_{0}c_1  +\frac{2}{3} g_{1} \delta_{0})Y^3\\
&+(-2 \delta_{0} c_{1} +4\delta_1  +8 g_{1}\delta_0)L^2Y+(-2\delta_1 -4 g_{1}\delta_0 +\delta_{0}c_1)LY^2\\
&+(8 g_{1} c_{1} \delta_{0}-8 g_{1} \delta_{1}-4 \delta_2 -5 g_{1}^{2} \delta_0+4 c_1\delta_1 -2 \delta_{0}c_2  -6 g_2\delta_{0}) L^{2}\\
&+(-8 g_{1} c_{1} \delta_{0}+12 g_{1}^{2} \delta_{0} -4c_{1} \delta_{1} +2 \delta_{0} c_2+8 \delta_{0} g_{2} +16 g_{1}\delta_1+8  \delta_{2}) L Y\\
& +(\delta_{1}c_1  +2 g_{1}c_1 \delta_{0} -2 \delta_{2}-3 g_{1}^{2} \delta_{0} -\frac{1}{2} \delta_{0}  c_{2}-2 \delta_{0} g_{2}-4 g_{1} \delta_1)Y^{2} \\
&+(12 g_{1}^{2} \delta_{0}c_1    +8 c_{1} \delta_{0} g_{2} +16 g_{1} c_{1} \delta_{1}-8 g_{1} c_{2} \delta_{0} -4  \delta_{0} g_{1} g_{2} \\
&\hspace{2em} -8 \delta_{1} g_{2}-12 \delta_{0}  g_{3}-4 c_{2} \delta_{1}+2 \delta_{0}  c_{3} +8  \delta_{2} c_{1} +4 \delta_{0} g_{1}^{3}+4 g_{1}^{2} \delta_{1})L\\
&+(-\delta_{0} c_3    -6 g_{1}^{2} \delta_{0}c_1 -4 c_{1} \delta_{0} g_{2} -8 g_{1} c_{1} \delta_{1} +4 g_{1} \delta_{0} c_{2} +12\delta_{0} g_{1} g_{2} \\
&\hspace{2em} +8 \delta_{3}+12 g_{1}^{2} \delta_{1} +4  \delta_{0} g_{1}^{3}-4  c_{1} \delta_{2}+2 \delta_{1} c_{2}+4 \delta_{0} g_{3}+16 g_{1} \delta_{2}+8 \delta_{1} g_{2} )Y \\
&+16 \delta_{4}-16 \delta_{1} g_{3}+32 \delta_{3} g_{1}+32 g_{1}^{2} \delta_{2}-24 \delta_{0} g_{4}+8 g_{2}^{2} \delta_{0}+5 \delta_{0} g_{1}^{4}+16 \delta_{1} g_{1}^{3}\\
-&8 \delta_{0} g_{1} g_{3}+16 \delta_{1} g_{1} g_{2}+12 \delta_{0} g_{1}^{2} g_{2}+12 g_{1}^{2} \delta_{1} c_{1}+12 \delta_{0} g_{1} g_{2}c_1+8 \delta_{3}c_1+4 \delta_{0} g_{1}^{3}c_1
\\
&+4 \delta_{0} g_{3}c_1+8 \delta_{1} g_{2} c_{1}+16 g_{1} \delta_{2}c_1-4 \delta_{2}c_2-6 g_{1}^{2} \delta_{0} c_{2}-4 \delta_{0} g_{2}c_2-8 g_{1} \delta_{1}c_2\\
&+4 g_{1} \delta_{0} c_{3}+2 \delta_{1} c_{3}-\delta_{0} c_{4}.
\end{split}
\end{equation*}
Note that the above expression is a polynomial in $Y$ and $L$. 
By collecting terms of the same degree in $\mathbb{R}[Y,L]$ and noting from \eqref{eq:notation} that  $Y = \log K$ and $L = \log  \tfrac{t}{2 \pi} $, we find that 
\begin{equation*}
  \mathcal{R}(0,0) =   \sum_{j=0}^{4}  Q_j\left(Y, L  \right),
\end{equation*}
where the polynomials $Q_j$ are defined within the statement of Theorem \ref{thm22}.

\appendix

\section{Proof of Corollary \ref{cor}}\label{proof of cor}

Let $r(t) = \mathds{1}_{[T,2T]}(t)$ and choose smooth functions $\omega^{+}(t)$  and  $\omega^{-}(t)$ which satisfy 
    \[
        \omega^{-}(t) \le r(t) \le \omega^{+}(t),
    \]
where 
    \begin{equation*}
    \omega^{+}(t)=
    \begin{cases}
    0 \quad &\text{ if }\,\, \,  t < T-T_0 \,\,\, \text{ or } \,\,\, t>2T+T_0, \\
    1  \quad &\text{ if }\,\, \,  T+T_0 \leq t \leq 2T-T_0, 
    \end{cases}
     \end{equation*} 
and also
    \begin{equation*}
     (\omega^{\pm})^{(j)} \ll T_0^{-j}.
     \end{equation*} 
Note that 
    \begin{equation}
       \label{Dineq}
        \mathscr{D}_{2,2;  \omega^{-}}(K)  \le \mathscr{D}_{2,2;  r}(K)  \le  \mathscr{D}_{2,2;  \omega^{+}}(K), 
    \end{equation}
where we let
    \begin{equation*}
      \mathscr{D}_{2,2;  \omega^{\pm}}(K) 
     =        \sum_{j=0}^{4}  \int_{-\infty}^{\infty} \omega_{\pm}(t)  Q_j\left(\log K, \log \frac{t}{2 \pi} \right) \, dt 
                  + O\bigg(T^{\tfrac34(1+\eta)+\e}\Big(\frac{T}{T_0}\Big)^{\frac94}\bigg)+O\big(T^{1-\tfrac{\eta}{2}}\big) .
    \end{equation*}
It  follows from the above that 
    \begin{equation*}
    \begin{split}
      \label{diff}
       & \sum_{j=0}^{4}  \left\{ \int_{-\infty}^{\infty} \omega_{+}(t)  Q_j\Big(\log K, \log \frac{t}{2 \pi} \Big) \, dt 
       - \int_{-\infty}^{\infty} r(t)  Q_j\Big(\log K, \log \frac{t}{2 \pi} \Big) \, dt  \right\} 
       \\
      &  =  \sum_{j=0}^{4}  \left\{ \int_{T-T_0}^{T} + \int_{2T}^{2T+T_0} \right\} \omega_{+}(t)  Q_j\Big(\log K, \log \frac{t}{2 \pi} \Big) \, dt  
       \ll T_0 (\log T)^4. 
    \end{split}
    \end{equation*}
Note that a similar argument establishes the same bound when $\omega^{+}$ is replaced by $\omega^{-}$. Thus by \eqref{Dineq} we have
    \begin{align*}
      \mathscr{D}_{2,2;  r}(K)= &\,
        \sum_{j=0}^{4}  \int_{-\infty}^{\infty} r(t)  Q_j\Big(\log K, \log \frac{t}{2 \pi} \Big) \, dt  \\
                  &+ O\bigg(T^{\tfrac34(1+\eta)+\e}\Big(\frac{T}{T_0}\Big)^{\tfrac94}\bigg) +O\big(T^{1-\tfrac{\eta}{2}}\big)+ O\big(T_0 (\log T)^4 \big). 
    \end{align*}
We then select $T_0 =T^{\tfrac{12+3\eta}{13}}$ so that the first and the third error terms are equal, and obtain
   \begin{equation*}
      \mathscr{D}_{2,2;  r}(K)= 
        \sum_{j=0}^{4}  \int_{-\infty}^{\infty} r(t)  Q_j\Big(\log K, \log \frac{t}{2 \pi} \Big) \, dt 
                +O\Big(T^{\max\left\{\tfrac{12+3\eta}{13} ,1-\tfrac{\eta}{2}\right\}}\Big). 
    \end{equation*}



\section{Computation of the coefficients in Theorem \ref{thm22} }\label{coeff-comp}

In this section, we rewrite the expressions for $Q_0(x,y), Q_1(x,y), Q_2(x,y)$, and $Q_3(x,y)$ that appear in Theorem \ref{thm22} by using the definitions of $g_j$ and $\delta_j$ in terms of $\gamma_{j-1}$ and $\zeta^{(j)}(2)$ as described in  \eqref{gj} and \eqref{deltaj}. Note that $c_0=1$ and the rest of the coefficients $c_j$ that appear in Theorem \ref{thm22} depend on the smoothing function $\varphi$.

Using Maple we compute the following expressions for $Q_3, Q_2, Q_1$ and $Q_0$.

\begin{align*}
Q_3(x,y)
&=\left(\frac{4 \gamma}{\pi^{2}}-\frac{12 \zeta^{\prime}\! \left(2\right)}{\pi^{4}}-\frac{c_1}{\pi^{2}}\right) x^{3}+\left(\frac{6 c_1}{\pi^{2}}-\frac{24 \gamma}{\pi^{2}}+\frac{72 \zeta^{\prime}\! \left(2\right)}{\pi^{4}}\right) x^{2} y \\&+\left(-\frac{12 c_1}{\pi^{2}}+\frac{48 \gamma}{\pi^{2}}-\frac{144 \zeta^{\prime}\! \left(2\right)}{\pi^{4}}\right) x \,y^{2}+\left(-\frac{24 \gamma}{\pi^{2}}+\frac{72 \zeta^{\prime}\! \left(2\right)}{\pi^{4}}+\frac{8 c_1}{\pi^{2}}\right) y^{3}.
\end{align*}

\begin{align*}
& Q_2(x,y) \\
&= \bigg(\frac{12 \gamma_1}{\pi^{2}}-\frac{18 \gamma^{2}}{\pi^{2}}+\frac{144 \zeta^{\prime}\! \left(2\right) \gamma}{\pi^{4}}-\frac{432 {\zeta^{\prime}\! \left(2\right)}^{2}}{\pi^{6}}+\frac{36 \zeta^{\prime\prime}\! \left(2\right)}{\pi^{4}}+\frac{12 c_1 \gamma}{\pi^{2}}-\frac{36 \zeta^{\prime}\! \left(2\right) c_1}{\pi^{4}}-\frac{3 c_2}{\pi^{2}}\bigg) x^{2}\\
&+\bigg(-\frac{48 c_1\gamma}{\pi^{2}}+\frac{72 \gamma^{2}}{\pi^{2}}+\frac{144 \zeta^{\prime}\! \left(2\right) c_1}{\pi^{4}}+\frac{12 c_2}{\pi^{2}}-\frac{48 \gamma_1}{\pi^{2}}-\frac{576 \zeta^{\prime}\! \left(2\right) \gamma}{\pi^{4}}+\frac{1728 {\zeta^{\prime}\! \left(2\right)}^{2}}{\pi^{6}}-\frac{144 \zeta^{\prime\prime}\! \left(2\right)}{\pi^{4}}\bigg) x y\\
& +\bigg(\frac{48 c_1 \gamma}{\pi^{2}}-\frac{30 \gamma^{2}}{\pi^{2}}-\frac{144 \zeta^{\prime}\! \left(2\right) c_1}{\pi^{4}}-\frac{12 c_2}{\pi^{2}}+\frac{36 \gamma_1}{\pi^{2}}+\frac{288 \zeta^{\prime}\! \left(2\right) \gamma}{\pi^{4}}-\frac{864 {\zeta^{\prime}\! \left(2\right)}^{2}}{\pi^{6}}+\frac{72 \zeta^{\prime\prime}\! \left(2\right)}{\pi^{4}}\bigg) y^{2}.
\end{align*}

\begin{align*}
&Q_1(x,y) \\
&=\bigg(-\frac{36 c_1 \,\gamma^{2}}{\pi^{2}}+\frac{24 \gamma^{3}}{\pi^{2}}+\frac{24 c_1\gamma_1}{\pi^{2}}+\frac{288 c_1 \zeta^{\prime}\! \left(2\right) \gamma}{\pi^{4}}+\frac{24 c_2 \gamma}{\pi^{2}}-\frac{72 \gamma  \gamma_1}{\pi^{2}}-\frac{432 \zeta^{\prime}\! \left(2\right) \gamma^{2}}{\pi^{4}}\\
&\quad \, -4 c_1 \Big(\frac{216 {\zeta^{\prime}\! \left(2\right)}^{2}}{\pi^{6}}-\frac{18 \zeta^{\prime\prime}\! \left(2\right)}{\pi^{4}}\Big)-\frac{72 c_2\zeta^{\prime}\! \left(2\right)}{\pi^{4}}-\frac{6 c_3}{\pi^{2}}+\frac{12 \gamma_2}{\pi^{2}}+\frac{288 \zeta^{\prime}\! \left(2\right) \gamma_1}{\pi^{4}}\\
&\quad \,+16 \Big(\frac{216 {\zeta^{\prime}\! \left(2\right)}^{2}}{\pi^{6}}-\frac{18 \zeta^{\prime\prime}\! \left(2\right)}{\pi^{4}}\Big) \gamma -\frac{10368 {\zeta^{\prime}\! \left(2\right)}^{3}}{\pi^{8}}+\frac{1728 \zeta^{\prime}\! \left(2\right) \zeta^{\prime\prime}\! \left(2\right)}{\pi^{6}}-\frac{48 \zeta^{\prime\prime\prime}\! \left(2\right)}{\pi^{4}}\bigg) x\\
& +\bigg(\frac{72 c_1 \,\gamma^{2}}{\pi^{2}}+\frac{24 \gamma^{3}}{\pi^{2}}-\frac{48 c_1\gamma_1}{\pi^{2}}-\frac{576 c_1 \zeta^{\prime}\! \left(2\right) \gamma}{\pi^{4}}-\frac{48 c_2 \gamma}{\pi^{2}}+\frac{24 \gamma  \gamma_1}{\pi^{2}}\\
&-\frac{144 \zeta^{\prime}\! \left(2\right) \gamma^{2}}{\pi^{4}}+8 c_1 \Big(\frac{216 {\zeta^{\prime}\! \left(2\right)}^{2}}{\pi^{6}}-\frac{18 \zeta^{\prime\prime}\! \left(2\right)}{\pi^{4}}\Big)+\frac{144 c_2 \zeta^{\prime}\! \left(2\right)}{\pi^{4}}+\frac{12 c_3}{\pi^{2}}-\frac{36 \gamma_2}{\pi^{2}}-\frac{288 \zeta^{\prime}\! \left(2\right) \gamma_1}{\pi^{4}}\bigg) y.
\end{align*}

\begin{align*}
&Q_0(x,y) \\
&= -\frac{31104 {\zeta^{\prime}\! \left(2\right)}^{2} \zeta^{\prime\prime}\! \left(2\right)}{\pi^{8}}+\frac{1152 \zeta^{\prime}\! \left(2\right) \zeta^{\prime\prime\prime}\! \left(2\right)}{\pi^{6}}-\frac{72 \zeta^{\prime}\! \left(2\right)c_3}{\pi^{4}}
-\frac{6 c_4}{\pi^{2}}\\
&+8 \bigg(-\frac{1296 {\zeta^{\prime}\! \left(2\right)}^{3}}{\pi^{8}}+\frac{216 \zeta^{\prime}\! \left(2\right) \zeta^{\prime\prime}\! \left(2\right)}{\pi^{6}}-\frac{6 \zeta^{\prime\prime\prime}\! \left(2\right)}{\pi^{4}}\bigg) c_1 -4 \bigg(\frac{216 {\zeta^{\prime}\! \left(2\right)}^{2}}{\pi^{6}}-\frac{18 \zeta^{\prime\prime}\! \left(2\right)}{\pi^{4}}\bigg) c_2\\
& -\frac{24 \zeta^{\left(4\right)}\! \left(2\right)}{\pi^{4}}+\frac{864 {\zeta^{\prime\prime}\! \left(2\right)}^{2}}{\pi^{6}}+\frac{124416 {\zeta^{\prime}\! \left(2\right)}^{4}}{\pi^{10}}+\frac{24 \gamma_3}{\pi^{2}}+\frac{30 \gamma^{4}}{\pi^{2}}+\frac{48 \gamma_1^{2}}{\pi^{2}}+\frac{24 \gamma_1c_2}{\pi^{2}}\\
& -\frac{36 \gamma^{2} c_2}{\pi^{2}}-\frac{72 \gamma^{2} \gamma_1}{\pi^{2}}+32 \bigg(-\frac{1296 {\zeta^{\prime}\! \left(2\right)}^{3}}{\pi^{8}}+\frac{216 \zeta^{\prime}\! \left(2\right) \zeta^{\prime\prime}\! \left(2\right)}{\pi^{6}}-\frac{6 \zeta^{\prime\prime\prime}\! \left(2\right)}{\pi^{4}}\bigg) \gamma \\
&+32 \gamma^{2} \bigg(\frac{216 {\zeta^{\prime}\! \left(2\right)}^{2}}{\pi^{6}}-\frac{18 \zeta^{\prime\prime}\! \left(2\right)}{\pi^{4}}\bigg)+16 \gamma  \bigg(\frac{216 {\zeta^{\prime}\! \left(2\right)}^{2}}{\pi^{6}}-\frac{18 \zeta^{\prime\prime}\! \left(2\right)}{\pi^{4}}\bigg) c_1 \\
&+\frac{288 \zeta^{\prime}\! \left(2\right) \gamma_2}{\pi^{4}}+\frac{12 \gamma_2c_1}{\pi^{2}}+\frac{24 \gamma  c_3}{\pi^{2}}+\frac{24 \gamma^{3} c_1}{\pi^{2}}-\frac{24 \gamma  \gamma_2}{\pi^{2}}-\frac{576 \zeta^{\prime}\! \left(2\right) \gamma^{3}}{\pi^{4}}\\
&+\frac{288 \zeta^{\prime}\! \left(2\right) \gamma_1 c_1}{\pi^{4}}+\frac{288 \gamma  \zeta^{\prime}\! \left(2\right) c_2}{\pi^{4}}+\frac{576 \zeta^{\prime}\! \left(2\right) \gamma  \gamma_1}{\pi^{4}}-\frac{72 \gamma  \gamma_1 c_1}{\pi^{2}}-\frac{432 \gamma^{2} \zeta^{\prime}\! \left(2\right) c_1}{\pi^{4}}.
\end{align*}


\end{document}